\newtheorem{result}[theorem]{Result}
\newcommand{\N}{\mathbb{N}}
\newcommand{\R}{\mathbb{R}}
\newcommand{\abs}[1]{\left|#1\right|}
\newcommand{\norm}[1]{\left\|#1\right\|}
\newcommand{\commentout}[1]{}
\newcommand{\realmatrix}
\newcommand{\cardinality}[1]{ | #1 | }
\renewcommand{\d}[1]{\ensuremath{\operatorname{d}\!{#1}}}
\newcommand{\e}[1]{ {\mathrm{e}}^{ #1 } }
\newcommand{\expectation}[1]{ \mathbb{E} [ #1 ] }
\newcommand{\expectationWrt}[2]{ \mathbb{E}_{#2} [ #1 ] }
\newcommand{\expectationBig}[1]{ \mathbb{E} \Bigl[ #1 \Bigr] }
\newcommand{\expectationBigWrt}[2]{ \mathbb{E}_{#2} \Bigl[ #1 \Bigr] }
\newcommand{\indicator}[1]{ \mathds{1} [ #1 ] }
\newcommand{\process}[2]{ \{ #1 \}_{ #2 } }
\newcommand{\bigO}[1]{ O(#1) }
\newcommand{\pnorm}[2]{ \| #1 \|{}_{#2} }
\newcommand{\probability}[1]{ \mathbb{P} [ #1 ] }
\newcommand{\transpose}[1]{ #1{}^{\mathrm{T}} }
\newcommand{\naturalNumbersPlus}{ \mathbb{N}_{+} }
\newcommand{\naturalNumbersZero}{ \mathbb{N}_{0} }
\newcommand{\realNumbers}{ \mathbb{R} }
\newcommand{\criticalpoint}[1]{  #1^{\textnormal{opt}} }
\newcommand{\iterand}[2]{ #1^{[#2]} }
\newcommand{\iterandd}[2]{ #1^{\{#2\}} }
\newcommand{\refEquation}[1]{{\textrm{\eqref{#1}}}}
\newcommand{\refTheorem}[1]{{\textrm{Theorem~\ref{#1}}}}
\newcommand{\refCorollary}[1]{{\textrm{Corollary~\ref{#1}}}}
\newcommand{\refProposition}[1]{{\textrm{Proposition~\ref{#1}}}}
\newcommand{\refAppendixSection}[1]{\textrm{Appendix~\ref{#1}}}
\def\eqcom#1{\overset{\textnormal{(#1)}}}
\newcommand{\itr}[2]{ \iterand{#1}{#2} }
\newcommand{\itrd}[2]{ \iterandd{#1}{#2} }
\def\E{{\mathbb E}}
\def\({{\Bigl(}}
\def\){{\Bigr)}}
\newcommand{\ba}{\begin{array}}
\newcommand{\ea}{\end{array}}
\newcommand{\eps}{\varepsilon}
\newcommand{\xdeleted}[1]{\deleted{}}
\def\putin#1{{\color{black}#1}}
\def\albert#1{\textcolor{black}{#1}}
\def\takeout#1{}
\newacronym{GD}{GD}{Gradient Descent}
\newacronym{NN}{NN}{Neural Network} 
\newacronym{ODE}{ODE}{Ordinary Differential Equation}
\newacronym{PL}{PL}{Polyak--\L{}ojasiewicz}
\newacronym{ReLU}{ReLU}{Rectified Linear Unit}
\newacronym{SGD}{SGD}{Stochastic Gradient Descent}
\newacronym{MSE}{MSE}{Mean Square Error}
\newacronym{CIFAR}{CIFAR}{Canadian Institute For Advanced Research}
\newacronym{MNIST}{MNIST}{Modified National Institute of Standards and Technology}
\title{Almost Sure Convergence of Dropout Algorithms for Neural Networks}
\author{\name Albert Senen--Cerda \email a.senen.cerda@tue.nl\\
  \name Jaron Sanders  \email jaron.sanders@tue.nl \\
  \addr Department of Mathematics \& Computer Science\\ 
  Eindhoven University of Technology\\
  Eindhoven, The Netherlands
}
\begin{document}

\maketitle

\begin{abstract}
  We investigate the convergence and convergence rate of stochastic training algorithms for \glspl{NN} that have been inspired by \emph{Dropout} \citep{hinton2012improving}.
  With the goal of avoiding overfitting during training in \glspl{NN}, dropout algorithms consist in practice of multiplying the weight matrices of a \gls{NN} componentwise by independently drawn random matrices with $\{0, 1 \}$-valued entries during each iteration of \gls{SGD}.
  This paper presents a probability theoretical proof that for fully-connected \glspl{NN} with differentiable, polynomially bounded activation functions, if we project the weights onto a compact set when using a dropout algorithm, then the weights of the \gls{NN} converge to a unique stationary point of a projected system of \glspl{ODE}.

  After this general convergence guarantee, we go on to investigate the convergence rate of dropout.
  Firstly, we obtain generic sample complexity bounds for finding $\epsilon$-stationary points of smooth nonconvex functions using \gls{SGD} with dropout that explicitly depend on the dropout probability.
  Secondly, we obtain an upper bound on the rate of convergence of \gls{GD} on the limiting \glspl{ODE} of dropout algorithms for \glspl{NN} with the shape of an arborescence of arbitrary depth and with linear activation functions.
  The latter bound shows that for an algorithm such as \emph{Dropout} or \emph{Dropconnect} \citep{wan2013regularization}, the convergence rate can be impaired exponentially by the depth of the arborescence.

  In contrast, we experimentally observe no such dependence for wide \glspl{NN} with just a few dropout layers.
  We also provide a heuristic argument for this observation.
  Our results suggest that there is a change of scale of the effect of the dropout probability in the convergence rate that depends on the relative size of the width of the \gls{NN} compared to its depth.
\end{abstract}

\begin{keywords}
  Dropout, Convergence, Neural Networks, Stochastic Approximation, ODE Method
\end{keywords}

\section{Introduction}
\label{sec:Introduction}

\emph{Dropout} \citep{hinton2012improving} is a technique to avoid overfitting during training of \glspl{NN} that consists of temporarily `dropping' nodes of the network independently at each step of \gls{SGD}.
While in the original \emph{Dropout} algorithm in \cite{hinton2012improving} only nodes from the network were dropped, several stochastic training algorithms that avoid overfitting in \glspl{NN} have appeared since then; for example, \emph{Dropconnect} \citep{wan2013regularization}, \emph{Cutout} \citep{devries2017improved}.
\Cref{fig:Dropout_on_a_neural_network} depicts a \gls{NN} where we use \emph{Dropconnect} and drop individual edges instead of nodes.
In practice, such dropout algorithms consist of multiplying componentwise weight matrices of the \gls{NN} in each iteration by independently drawn random matrices with $\{ 0, 1 \}$-valued entries.
The elements of these random matrices indicate whether each individual edge or node is {filtered} ($0$) or is not filtered ($1$) during a training step.
The resulting weight matrices are then used in the backpropagation algorithm for computing the gradient of a \gls{NN}.
Mathematically, dropout turns the backpropagation algorithm into a step of a \gls{SGD} in which the primary source of randomness is the \gls{NN}'s configuration.
Under mild independence assumptions, the loss function of dropout is a risk function averaged over all possible \glspl{NN} configurations \citep{baldi2013understanding}.

\begin{figure}[!tbhp]
    \centering
    \begin{subfigure}{0.32\textwidth}
        \centering
        \def\layersep{2.0em}
        \begin{tikzpicture}[shorten >=1pt,->,draw=black!50, node distance=\layersep]
        \tikzstyle{every pin edge}=[<-,shorten <=1pt]
        \tikzstyle{neuron}=[circle,draw=black,fill=white!100,minimum size=10pt,inner sep=0pt]
        \tikzstyle{input neuron}=[neuron];
        \tikzstyle{output neuron}=[neuron];
        \tikzstyle{hidden neuron}=[neuron];
        \tikzstyle{annot} = [text width=4em, text centered]

\foreach \name / \y in {1,...,4}
\node[input neuron, pin=left:$x_{\y}$] (I-\name) at (0,-0.75*\y) {};

\foreach \name / \y in {1,...,5}
            \path[yshift=+0.375cm]
                node[hidden neuron] (Ha-\name) at (\layersep,-0.75*\y cm) {};

\foreach \name / \y in {1,...,4}
            \path[yshift=-0cm]
                node[hidden neuron] (Hb-\name) at (2*\layersep,-0.75*\y cm) {};
\foreach \name / \y in {1,...,3}
\path[yshift=-0.375cm]
                node[output neuron,pin={[pin edge={->}]right:$y_{\y}$}] (O-\name) at (3*\layersep, -0.75*\y cm) {};

\pgfmathsetseed{1987116}

\foreach \source in {1,...,4}
            \foreach \dest in {1,...,5}
                {
                \pgfmathparse{int(1000*rand+1000)} \ifnum \pgfmathresult>0
                    \path (I-\source) edge[black, opacity=1] (Ha-\dest); 
                \else
                    \path (I-\source) edge[black, opacity=0] (Ha-\dest);         	
                \fi
                }        

\foreach \source in {1,...,5}
            \foreach \dest in {1,...,4}
                {
                \pgfmathparse{int(1000*rand+1000)} \ifnum \pgfmathresult>0
                    \path (Ha-\source) edge[black, opacity=1] (Hb-\dest); 
                \else
                    \path (Ha-\source) edge[black, opacity=0] (Hb-\dest);         	
                \fi
                }        

        \foreach \source in {1,...,4}
            \foreach \dest in {1,...,3}
                {
                \pgfmathparse{int(1000*rand+1000)} \ifnum \pgfmathresult>0
                    \path (Hb-\source) edge[black, opacity=1] (O-\dest); 
                \else
                    \path (Hb-\source) edge[black, opacity=0] (O-\dest);         	
                \fi
                }        
\end{tikzpicture}         \caption{Case $p=1$.
        }
    \end{subfigure}
    \begin{subfigure}{0.32\textwidth}
        \centering
        \def\layersep{2.0em}
        \begin{tikzpicture}[shorten >=1pt,->,draw=black!50, node distance=\layersep]
        \tikzstyle{every pin edge}=[<-,shorten <=1pt]
        \tikzstyle{neuron}=[circle,draw=black,fill=white!100,minimum size=10pt,inner sep=0pt]
        \tikzstyle{input neuron}=[neuron];
        \tikzstyle{output neuron}=[neuron];
        \tikzstyle{hidden neuron}=[neuron];
        \tikzstyle{annot} = [text width=4em, text centered]

\foreach \name / \y in {1,...,4}
\node[input neuron, pin=left:$x_{\y}$] (I-\name) at (0,-0.75*\y) {};

\foreach \name / \y in {1,...,5}
            \path[yshift=+0.375cm]
                node[hidden neuron] (Ha-\name) at (\layersep,-0.75*\y cm) {};

\foreach \name / \y in {1,...,4}
            \path[yshift=-0cm]
                node[hidden neuron] (Hb-\name) at (2*\layersep,-0.75*\y cm) {};
\foreach \name / \y in {1,...,3}
\path[yshift=-0.375cm]
                node[output neuron,pin={[pin edge={->}]right:$y_{\y}$}] (O-\name) at (3*\layersep, -0.75*\y cm) {};

\pgfmathsetseed{1987116}

\foreach \source in {1,...,4}
            \foreach \dest in {1,...,5}
                {
                \pgfmathparse{int(1000*rand+1000)} \ifnum \pgfmathresult>1500
                    \path (I-\source) edge[black, opacity=1] (Ha-\dest); 
                \else
                    \path (I-\source) edge[black, opacity=0] (Ha-\dest);         	
                \fi
                }        

\foreach \source in {1,...,5}
            \foreach \dest in {1,...,4}
                {
                \pgfmathparse{int(1000*rand+1000)} \ifnum \pgfmathresult>1500
                    \path (Ha-\source) edge[black, opacity=1] (Hb-\dest); 
                \else
                    \path (Ha-\source) edge[black, opacity=0] (Hb-\dest);         	
                \fi
                }        

        \foreach \source in {1,...,4}
            \foreach \dest in {1,...,3}
                {
                \pgfmathparse{int(1000*rand+1000)} \ifnum \pgfmathresult>1500
                    \path (Hb-\source) edge[black, opacity=1] (O-\dest); 
                \else
                    \path (Hb-\source) edge[black, opacity=0] (O-\dest);         	
                \fi
                }        
\end{tikzpicture}         \caption{Case $p=0.25$.
        }
    \end{subfigure}
    \begin{subfigure}{0.32\textwidth}
        \centering
        \def\layersep{2.0em}
        \begin{tikzpicture}[shorten >=1.0pt,->,draw=black!50, node distance=0.0em]
        \tikzstyle{every pin edge}=[<-,shorten <=1pt]
        \tikzstyle{neuron}=[circle,draw=black,fill=white!100,minimum size=10pt,inner sep=0pt]
        \tikzstyle{input neuron}=[neuron];
        \tikzstyle{output neuron}=[neuron];
        \tikzstyle{hidden neuron}=[neuron];
        \tikzstyle{annot} = [text width=4em, text centered]

\foreach \name / \y in {1,...,1}
\node[input neuron, pin=left:$x_{\y}$] (I-\name) at (0.0,-0.5*\y -1.5) {};

\foreach \name / \y in {1,...,3}
            \path[yshift=-1.0cm]
                node[hidden neuron] (Ha-\name) at (\layersep,-0.5*\y cm) {};

\foreach \name / \y in {1,...,4}
            \path[yshift=-0.75cm]
                node[hidden neuron] (Hb-\name) at (2*\layersep,-0.5*\y cm) {};
            
\foreach \name / \y in {1,...,7}
\path[yshift=-0.0cm]
                node[output neuron,pin={[pin edge={->}]right:$y_{\y}$}] (O-\name) at (3*\layersep, -0.5*\y cm) {};

\pgfmathsetseed{1987116}

\path (I-1) edge[black, opacity=1] (Ha-1);        
        \path (I-1) edge[black, opacity=1] (Ha-2);
        \path (I-1) edge[black, opacity=1] (Ha-3);
\path (Ha-1) edge[black, opacity=1] (Hb-1);        
        \path (Ha-1) edge[black, opacity=1] (Hb-2);
        \path (Ha-2) edge[black, opacity=1] (Hb-3);
        \path (Ha-3) edge[black, opacity=1] (Hb-4);

        \path (Hb-1) edge[black, opacity=1] (O-1);        
        \path (Hb-1) edge[black, opacity=1] (O-2);
        \path (Hb-2) edge[black, opacity=1] (O-3);
        \path (Hb-2) edge[black, opacity=1] (O-4);
        \path (Hb-3) edge[black, opacity=1] (O-5);
        \path (Hb-4) edge[black, opacity=1] (O-6);
        \path (Hb-4) edge[black, opacity=1] (O-7);  
\end{tikzpicture}         \caption{An arborescence.
        }
        \label{fig:Arborescence_picture}
    \end{subfigure}

    \caption{\emph{(a,b)} \emph{Dropconnect}'s training step \citep{wan2013regularization} in a \gls{NN} with $L=3$ layers.
        In this algorithm, on every iteration, a random \gls{NN} is first generated by removing each edge with probability $p \in (0,1]$ independently of all other edges.
        The output of this random \gls{NN} is then used to update all weights using the backpropagation algorithm.
        \emph{(c)}
        An example arborescence of depth $L = 3$.
    }
    \label{fig:Dropout_on_a_neural_network}
\end{figure}

An interesting aspect of dropout algorithms is that they lie at the intersection of stochastic optimization and \emph{percolation theory}, which investigates properties related to connectedness of random graphs and deterministic (possibly infinite) graphs in which vertices and edges are deleted at random.
In the case of dropout, the output of the filtered \gls{NN} with temporarily deleted edges is used to update the weights.
If dropout filters too many weights, then little information about the input can pass through the network, which will consequently also yield a gradient update for that step that contains little relevant information.

As an example, we may consider again the networks in Figures~\ref{fig:Dropout_on_a_neural_network} (a)--(b) when we use \emph{Dropconnect}, that is, we filter each edge with probability $1-p$ independently of all other edges.
We can observe that the number of paths $\chi$ in Figure~\ref{fig:Dropout_on_a_neural_network} (b) that fully transverse the network ($\chi=5$) is much smaller compared to those of Figure~\ref{fig:Dropout_on_a_neural_network} (a) ($\chi=240$).
In an $L$-layer \gls{NN} with no biases, a path from the input layer to the output goes through $L$ weights that have filters.
Then, the probability that a path from input to output stays unfiltered and contributes to a weight update is $p^{L}$.
If we now fix one edge in the path, then the probability of updating its corresponding weight through that path in particular is also $p^{L}$.
There are, however, many other paths in a \gls{NN} passing through a single edge.
The probability that one of those paths is not filtered will be large and may compensate the exponential factor $p^L$.
Considering the connection to bond percolation, one may therefore suspect that dropout algorithms may perform worse than a routine implementation of the backpropagation algorithm.
However, dropout algorithms usually perform well since they avoid overfitting in \glspl{NN} \citep{hinton2012improving,srivastava2014dropout}.
From the point of view of bond percolation however, this should still come at the cost of slower convergence of dropout algorithms, and conceivably by as much as a factor $p^L$, where $L$ is the number of dropout layers.

Most theoretical focus has been on the generalization properties of \glspl{NN} trained with dropout algorithms.
We can mention \cite{hinton2012improving,baldi2013understanding,wager2013dropout,srivastava2014dropout,baldi2014dropout,cavazza2017dropout,mianjy2018implicit,mianjy2019dropout,pal2020regularization,wei2020implicit}, which we briefly review in \Cref{sec:Related_work}.
In this paper, however, we investigate dropout from the stochastic optimization perspective.
That is, we aim to answer if dropout algorithms converge and study the rate at which they converge, which is expected to depend on the dropout probability.
Compared to the study of the generalization properties of dropout, this aim has received less attention in the literature.
In particular, we can only mention \cite{Mianjy2020OnCA} and \cite{senencerda2020asymptotic}.
In \cite{Mianjy2020OnCA}, a convergence rate for the test error in a classification setting is obtained when training shallow \glspl{NN} with dropout.
This rate, is, however, independent of the dropout probability.
In \cite{senencerda2020asymptotic}, a convergence rate for the empirical risk associated with training shallow linear \glspl{NN} with dropout is obtained that depends on the dropout probability.
Both results refer to shallow \glspl{NN} where the width of the \gls{NN} plays a role in the convergence rate.
We refer to \Cref{sec:Related_work} below for further details.

From the previous discussion, however, we suspect that there is an effect of dropout in the convergence rate in \emph{deep} \glspl{NN} with several layers of dropout.
In this paper, we investigate this problem.
In particular, we provide convergence guarantees for training \glspl{NN} that have several layers of dropout and analyze simplified models for deep \glspl{NN}, for which it is possible to obtain an explicit convergence rate that depends on the dropout probability and depth.
We also consider the effect on the sample complexity of using dropout \gls{SGD} and complement the previous results with simulations on realistic \glspl{NN} to examine the convergence rate of dropout empirically.

Before introducing the results of the paper we briefly define the fundamental concepts related to training of \glspl{NN} with dropout that we will use throughout this paper.

\subsection{Dropout and \texorpdfstring{\gls{SGD}}{SGD}}
\label{sec:dropout_in_NN_intro}

A \gls{NN} $\Psi_{W}: \mathcal{X} \to \mathcal{Y}$ with weights $W$ is typically used to predict output $Y \in \mathcal{Y}$ given input $X \in \mathcal{X}$ both of which are sampled from some joint distribution.
For a given loss $l: \mathcal{Y} \times \mathcal{Y} \to \R$, the risk function of $\Psi_{W}$ is usually defined as
\begin{equation}
    \mathcal{U}(W)
    \triangleq \int l( \Psi_{W}(x), y) \d{ \probability{ (X,Y) = (x,y)} },
    \label{eqn:Oracle_risk_function}
\end{equation}
where the distribution is usually given by the empirical distribution of a finite number of samples $\{(x_i, y_i)\}_{i=1}^n \in \mathcal{X} \times \mathcal{Y}$.
In this case, the risk is an \emph{empirical risk}.

Ideally, the \gls{NN} is operated using weights in the set $\arg \min_W \mathcal{U}(W)$.
However, the weights are found in practice by using gradient descent or its stochastic variant \gls{SGD}, which aims to minimize the risk in \eqref{eqn:Oracle_risk_function} by updating the weights in the local direction that minimizes the function.
At time $t$, the weights $\itr{W}{t}$ of the \gls{NN} are namely updated by setting
\begin{equation}
    \itr{W}{t+1}
    = \itr{W}{t} - \itrd{\alpha}{t+1} \itr{\tilde{\Delta}}{t+1}.
    \label{eqn:Update_rule_for_Wtp1_in_terms_of_random_direction_Delta}
\end{equation}
Here, $\itr{\tilde{\Delta}}{t+1}$ is a stochastic estimate of the gradient of \eqref{eqn:Oracle_risk_function} and $\itrd{\alpha}{t+1}$ is a step size which we will specify later.
Let $B_{W}(X,Y)$ be the gradient at $W$ of \eqref{eqn:Oracle_risk_function}.
If the input and output samples $\itr{X}{t+1}, \itr{Y}{t+1}$ are provided at time $t$, then the update of \gls{SGD} is given by
\begin{equation}
    \itr{\tilde{\Delta}}{t+1} = B_{\itr{W}{t}}(\itr{X}{t+1}, \itr{Y}{t+1}).
    \label{eqn:update_direction_SGD_nodropout}
\end{equation}

As we have mentioned, dropout filters are applied to some of the weights $W$ during training by using matrices of random variables $F$ with $\{0,1\}$-valued entries.
Denote by $\itr{F}{t+1}$, $\itr{X}{t+1}, \itr{Y}{t+1}$ the dropout filters and the samples provided to the \gls{SGD} algorithm at time $t$, respectively.
Compared to \eqref{eqn:update_direction_SGD_nodropout}, a dropout algorithm defines the estimate of the gradient update as
\begin{equation}
    \itr{\Delta}{t+1}
    \triangleq \itr{F}{t+1} \odot \mathrm{B}_{ \itr{F}{t+1} \odot \itr{W}{t} }( \itr{X}{t+1}, \itr{Y}{t+1} ),
    \label{eqn:Random_direction_Delta_as_function_of_Backpropagation}
\end{equation}
where $\odot$ denotes the componentwise product.

Note that in \eqref{eqn:Random_direction_Delta_as_function_of_Backpropagation} the filters appear twice.
Firstly, they filter the weights $\itr{W}{t}$ when the gradient is computed depending only on the subnetwork provided by dropping some edges or nodes.
Secondly, they filter the updates in $\itr{\Delta}{t+1}$ since only the remaining weights will be updated.
We remark that in this general formulation, other distributions for the filters than those for dropout and dropconnect are allowed.
For specific examples of distribution of the filter matrices we refer to \Cref{section:generalized_dropout}.

We next present the results of this paper.

\subsection{Summary of results}

Our first result is a formal probability theoretical proof that for any (fully connected) \gls{NN} topology and with differentiable polynomially bounded activation functions (see \Cref{def:Polynomially_bounded_maps}), the iterates of projected \gls{SGD} with dropout-like filters converge.
In particular, a step of projected \gls{SGD} with dropout is given by
\begin{equation}
    \itr{W}{t+1}
    =
    \mathrm{P}_{\mathcal{H}}(\itr{W}{t} - \itrd{\alpha}{t+1} \itr{\Delta}{t+1})
    \quad
    \textnormal{for}
    \quad
    t \in \N_0,
    \label{update_psgd}
\end{equation}
where $\itr{\Delta}{t+1}$ is the estimate of the gradient with dropout in \eqref{eqn:Random_direction_Delta_as_function_of_Backpropagation} and $\mathrm{P}_{\mathcal{H}}$ is an operator that projects the iterates onto a compact convex set $\mathcal{H}$ \citep{oymak2018learning}.
In order to state our first result, we define a \emph{dropout algorithm's risk function} as
\begin{align}
    \mathcal{D}(W)
    \triangleq \int l(\Psi_{f \odot W}(x),y) \d{ \probability{ (F,X,Y) = (f,x,y) } },
    \label{eqn:Dropouts_emperical_risk_function}
\end{align}
and we will consider $l(a,b) = |a-b|^2$ to be the $\ell_2$-loss.
The result is stated informally in the next proposition.

\begin{result}
    (Informal statement of Proposition~\ref{prop:dropout_converges}.)
    Under sufficient regularity of the activation functions, bounded moments and independence of random variables and some assumptions on the boundary $\mathcal{H}$, with update \eqref{update_psgd}, the weights $(\itr{W}{t})_{t}$ converge to a unique stationary set of a projected system of \glspl{ODE}
    \begin{equation}
        \frac{\d{W}}{\d{t}}
        =
        - \nabla_W \mathcal{D}|_{\mathcal{H}}(W) + \pi(W),
        \label{eqn:ODE_dropout_intro}
    \end{equation}
    where $\pi(W)$ is a \emph{constraint term}, which describes the minimum vector required to keep the gradient flow of $\nabla \mathcal{D}$ in $\mathcal{H}$.
\end{result}

This result provides a formal guarantee with the sufficient conditions for dropout algorithms to be well-behaved and at least asymptotically (meaning after sufficiently many iterations) to not suffer from problems that could have arisen from the relation to bond percolation.
Moreover, for a wide range of \glspl{NN} and activation functions the function $\mathcal{D}(W)$ is the expectation of the risk over the dropout's filters distribution, which in our result is not restricted to dropping nodes and can even be coupled to the data.
This result also shows that \gls{SGD} with dropout converges to the stationary points of $\mathcal{D}(W)$.
While a guarantee is necessary, a convergence rate would yield more insight into the trade-offs of the algorithm, especially in the dependence on depth.

In our second result, we go one step beyond the convergence guarantee and compute a bound for the sample complexity of \gls{SGD} with dropout to an $\epsilon$-stationary point of a generic smooth nonconvex function $\mathsf{D}(W)$.
We say $W \in \mathcal{W}$ is an $\epsilon$-stationary point of $\mathsf{D}$ if $\pnorm{\nabla \mathsf{D}(W)}{2} \leq \epsilon$ holds.
Note that stationary points are not necessarily minima, but the sample complexity, understood as the number of iterations $T$ required to reach $\epsilon$-stationarity, is usually associated with the complexity of the function to be optimized.

For a generic smooth nonconvex function $\mathsf{D}(W)$, we consider dropout to be \gls{SGD} with the update in \eqref{eqn:Random_direction_Delta_as_function_of_Backpropagation}, where filters $F$ are chosen independently at each step and are $\{0,1\}$-valued for each parameter.
In our result we assume boundedness and Lipschitzness conditions on $\mathsf{D}(W)$.
Moreover, under some additional assumptions on the loss function, examples of \glspl{NN} with sigmoid activation functions $\sigma(t)= 1/(1+\exp(-t))$ are also covered by our result.
In this particular case, $\mathsf{D}(W) = \mathcal{D}(W)$ holds with the definition in \eqref{eqn:Dropouts_emperical_risk_function}.
For the general case we prove the following:

\begin{result}
    (Informal statement of Proposition~\ref{prop:convergence_rate_generic_dropout}.)
    Assume that $\mathsf{D}(W)$ has enough regularity and satisfies some boundedness and Lipschitzness assumptions.
    Let $\itrd{W}{t}$ be iterates of \eqref{update_psgd}.
    For any $T \in \N$ there exist $c>0$ and $c_1, c_2 > 0$ and $\itrd{\alpha}{t}=\eta$ constant such that if $p > c/T$, then as $T \to \infty$,
    \begin{equation}
        \min_{t \in [T]} \expectationBig{\pnorm{\nabla \mathsf{D}(\itrd{W}{t})}{2}^2} = O \Bigl( \sqrt{\frac{p(c_1 + (1-p)c_2)}{T}} \Bigr).
        \label{eqn:convergence_rate_generic_dropout_intro}
    \end{equation}
\end{result}

Hence, at least $T$ iterations of dropout-like \gls{SGD} algorithms are required to reach an $O((p(c_1 + (1-p)c_2)/T)^{1/4})$-stationary point of nonconvex smooth functions in expectation.
Here, $c_1, c_2$ are constants depending on the data and function, respectively.
Compared to the theoretical optimum rate of $O(T^{-1/4})$ for \gls{SGD} on nonconvex smooth functions \citep{drori2020complexity}, this result shows that dropout changes the optimization landscape and approximate stationary points are easier to find depending on the dropout probability.
In this setting, we also consider the complexity when we scale the weights by a factor $1/p$ during training, which is commonly used to compensate the effect of dropout on the convergence rate.

It must be emphasized that Proposition~\ref{prop:convergence_rate_generic_dropout} does not assume much structure on the objective function.
As consequence, in spite of the fact that the bound in \eqref{eqn:convergence_rate_generic_dropout_intro} holds in some settings with deep \glspl{NN}, the depth of such \gls{NN} would appear only \emph{implicitly} in the constants $c_1, c_2$.
In order to determine the dependence between the convergence rate and the depth of a \gls{NN} \emph{explicitly}, one must exploit the specific structure of a \gls{NN}, which we leverage in our next result.

Our third result in this paper is an explicit upper bound for the rate of convergence of regular \gls{GD} on the limiting \glspl{ODE} of dropout algorithms for arborescences (a class of trees, see \Cref{fig:Arborescence_picture} for an example), of arbitrary depth with linear activation functions $\sigma(t) = t$.
In particular, we will consider the update rule
\begin{equation}
    \itrd{W}{t+1} = \itrd{W}{t} - \alpha \nabla \mathcal{D}(\itrd{W}{t}).
    \label{eqn:iterates_gradient_descent_intro}
\end{equation}
Analyzing the convergence of training algorithms on simplified \glspl{NN} with linear activation functions is commonly used to gain insight into more complex models, see e.g.\ \citep{arora2018convergence,shamir2018exponential,bartlett2018gradient}.
Even without a dropout algorithm present, this task already provides a substantial theoretical challenge as the optimization landscape is nonconvex.
Our choice to restrict the analysis to arborescences allows us to quantitatively tie our upper bound for the convergence rate to the depth and the number of paths within the arborescence.
We prove the following:

\begin{result}
    (Informal statement of Proposition~\ref{prop:estimate_nu_dependent_p}.)
    Assume that the base graph $G$ of the \gls{NN} is an arborescence of depth $L$ with $|\mathcal{L}(G)|$ leaves and the filters $F$ follow the distribution prescribed by \emph{Dropconnect} or \emph{Dropout} with dropout probability $1-p$ (see Proposition~\ref{prop:estimate_nu_dependent_p}).
    Then there exist $\alpha > 0$ and $1> \eta > 0$ depending on the initialization such that the iterates of \eqref{eqn:iterates_gradient_descent_intro} satisfy
    \begin{equation}
        \mathcal{D}(\itrd{W}{t}) - \min_{W} \mathcal{D}(W)
        \leq
        \bigl( \mathcal{D}(\itrd{W}{0}) - \min_{W} \mathcal{D}(W) \bigr) \exp(- \omega t/2)
        ,
        \label{eqn:PL_inequality_upper_bounded_and_iterated_intro}
    \end{equation}
    with
    \begin{equation}
        \omega
        = \mathrm{O} \Bigl( \frac{p^L}{L \cardinality{\mathcal{L}(G)}^2} \eta^{2L}\Bigr)
        .
    \end{equation}
\end{result}

One important consequence of this result is that the convergence rate exponent indeed deteriorates by a factor $p^L$ in these \glspl{NN}.
Finally, we complement this result with numerical experiments.
We target the dependency of the convergence on $p$ for more realistic wider and nonlinear networks on commonly used datasets.
Perhaps surprisingly, we do not observe an exponential decrease of the convergence rate exponent due to dropout in these simulations.
We will offer some heuristic explanation for this result by looking at the update rate of a generic weight.

Our results lead to the following consequences.
First, whenever the iterates of a dropout algorithm with $\ell_2$-loss are bounded, they are guaranteed to converge to a stationary point of the risk function $\mathcal{D}(W)$ induced by the dropout algorithm.
Secondly, we prove rigorously that the convergence rate when training with e.g.\ \emph{Dropout} or \emph{Dropconnect} can change the convergence rate on the empirical risk depending on $p$ and in arborescences can decrease by as much as a factor $p^L$.
For more realistic wider networks, however, we conduct numerical experiments that suggest that the convergence rate is not necessarily affected by depth as much across different dropout rates $1-p$ in neural networks with just a few layers of dropout.

Our findings motivate further theoretical study of the convergence rate of dropout for deep and wide networks.
We suspect that there is a transition regime of the convergence rate.
Such transition would affect the dependence on $p$ and would be observed when going from networks with many layers of dropout with small width, where dependence on the rate may be exponential in $p$, to networks with a few layers of dropout but very wide, where dependence is not exponential anymore.

\subsection{Literature overview}
\label{sec:Related_work}

The first description of a dropout algorithm was by \cite{hinton2012improving}.
Diverse variants of the algorithm have appeared since, including versions in which edges are dropped \citep{wan2013regularization}; groups of edges are dropped from the input layer \citep{devries2017improved}; the distribution of the filters are Gaussian \citep{kingma2015variational,molchanov2017variational}; the removal probabilities change adaptively \citep{ba2013adaptive,li2016improved}; and that are suitable for recurrent \glspl{NN} \citep{zaremba2014recurrent,semeniuta2016recurrent}.
The performance of the original algorithm has been investigated on datasets \citep{hinton2012improving,srivastava2014dropout}, and dropout algorithms have found application in e.g.\ image classification \citep{krizhevsky2012imagenet}, handwriting recognition \citep{pham2014dropout}, heart sound classification \citep{kay2016dropconnected}, and drug discovery in cancer research \citep{urban2018deep}.

Theoretical studies of dropout algorithms have focused on their regularization effect.
The effect was first noted by \cite{hinton2012improving,srivastava2014dropout}, and subsequently investigated in-depth for both linear \glspl{NN} as well as nonlinear \glspl{NN} by \cite{baldi2013understanding,wager2013dropout,baldi2014dropout,wei2020implicit}.
Within the context of matrix factorization, it has been shown that \emph{Dropout}'s regularization induces a shrinkage and a thresholding of the singular values of the matrix at the optimum \citep{cavazza2017dropout}.
Characterizations of \emph{Dropout}'s risk function and \emph{Dropout}'s regularizer for (usually linear) \glspl{NN} can be found in \cite{mianjy2018implicit,mianjy2019dropout,pal2020regularization}.
Random networks with \emph{Dropout} have been also studied in \cite{sicking2020characteristics} and in \cite{huang2019mean}.

Detailed theoretical investigations into the convergence of dropout algorithms are however relatively scarce.
While revising this paper, new results appeared and these now give insight into the convergence rate of \emph{Dropout} in \textrm{ReLU} shallow \glspl{NN} for a classification task \citep{Mianjy2020OnCA}.
In \cite{Mianjy2020OnCA}, it is shown that $\mathrm{O}(1/\epsilon)$ iterations of \gls{SGD} to reach $\epsilon$-suboptimality for the test error are required; interestingly, it is independent of the dropout probability because of their assumption that the data distribution is separable by a margin in a particular Reproducing Kernel Hilbert space.
Compared to our generic convergence result, we do not assume structure on the predictor or data and look instead at the iterations required to reach $\epsilon$-stationarity in nonconvex functions using dropout-like \gls{SGD}.
A study of the asymptotic convergence rate of \emph{Dropout} and \emph{Dropconnect} on shallow linear neural networks has also appeared recently \citep{senencerda2020asymptotic}.
\putin{There, an asymptotic convergence rate for dropout linear shallow networks is provided.
    Namely, for wide linear shallow networks with width $D$ and dropout probability $1-p > 0$ a local convergence rate close to a minimum of $O(p(1-p)/(pD + 1-p))$ is found.
}
Finally, it must be noted that convergence properties have been thoroughly studied within the context of \glspl{NN} being trained without dropout algorithms, see e.g.\ \cite{arora2018convergence,shamir2018exponential, zou2020gradient, gao2021global} and references therein.

Dropout algorithms can, by construction, be understood as forms of \gls{SGD}.
More generally, dropout algorithms are all stochastic approximation algorithms.
The first stochastic approximations algorithms were introduced by \cite{robbins1951stochastic,kiefer1952stochastic}, and have been subject to enormous literature due to their ubiquity.
For overviews and their application to \glspl{NN}, we refer to books by \cite{kushner2003stochastic,borkar2009stochastic,bertsekas1995neuro}.

\section*{A word on notation}

In this paper we index deterministic sequences with curly brackets: $\itrd{\alpha}{1}, \itrd{\beta}{1}$, etc. This distinguishes them from sequences of random variables, which we index using square brackets, e.g.\ $\itr{X}{1}, \itr{Y}{1}$, etc.

Deterministic vectors are written in lower case like $x \in \realNumbers^d$, but an exception is made for random variables (which are always capitalized).
Matrices are also always capitalized.
For a function $\sigma : \R \to \R$ and a matrix $A \in \R^{a \times b}$, $a, b \geq 1$, we denote by $\sigma(A)$ the matrix with $\sigma$ applied componentwise to $A$.
Subscripts will be used to denote the entries of any tensor, e.g.\ $x_i$, $A_{i,j}$, or $T_{i,j,l}$.
For any vector $x \in \realNumbers^d$, the $\ell_2$-norm is defined as
$
    \pnorm{x}{2}
    \triangleq ( \sum_{i=1}^d | x_i |^2 )^{1/2}.
$
For any matrix $A \in \realNumbers^{a \times b}$, the Frobenius norm is defined as
$
    \pnorm{A}{\mathrm{F}}
    \triangleq ( \sum_{i=1}^a \sum_{j=1}^b | A_{i,j} |^2 )^{1/2}.
$
For two matrices $A, B$, the Hadamard (componentwise) product is denoted by $A \odot B$.

Let $\naturalNumbersPlus$ be the strictly positive integers and $\naturalNumbersZero \triangleq \naturalNumbersPlus \cup \{ 0 \}$.
For $l \in \naturalNumbersPlus$, we denote $[l] = \{1, \ldots, l\}$.
For a function $g \in C^2(\R^n)$, we denote the gradient and Hessian of $g$ with respect to the Euclidean norm $\norm{\cdot}_2$ in $\R^n$ by $\nabla g$ and $\nabla^2 g$, respectively.

 \section{Model}
\label{sec:Preliminaries}

We now formally define \glspl{NN}, which we had depicted in \Cref{fig:Dropout_on_a_neural_network}, as well as the class of activation functions that we will use for the convergence guarantee in our first result below.

\subsection{Neural networks, and their structure}

Let $L$ denote the number of layers in the \gls{NN}, and $d_l \in \naturalNumbersPlus$ the output dimension of layer $l = 1, \ldots, L$.
Let $W_{l+1} \in \realNumbers^{d_{l+1} \times d_{l}}$ denote the matrix of weights in between layers $l$ and $l+1$ for $l = 0, 1, \ldots, L-1$.
Denote $W = (W_L, \ldots, W_1) \in \mathcal{W}$ with $\mathcal{W} \triangleq \realNumbers^{d_{L} \times d_{L-1}} \times \cdots \times \realNumbers^{d_1 \times d_0}$ the set of all possible weights.
In this paper, we consider \glspl{NN} without biases.

\glsreset{NN}
\begin{definition}
\label{def:NN__Feedforward_recursion}
    Let $\sigma$ be an activation function $\sigma: \R \to \R$.
    A \emph{\gls{NN} with $L$ layers} is given by the class of functions $\Psi_{W} : \R^{d_0} \to \R^{d_L}$ defined iteratively by
    \begin{align}
        A_0
= x,
        \quad
A_{i}
= \sigma( W_i A_{i-1} )
        \quad
        \forall i \in \{ 1, \ldots, L-2 \},
        \quad
\Psi_{W}(x)
= W_L A_{L-1} = A_L.
    \end{align}
\end{definition}

Canonical activation functions include the \gls{ReLU} function $\sigma(t) = \max \{ 0, t \}$, the sigmoid function $\sigma(t) = 1/(1+\e{-t})$, and the linear function $\sigma(t) = t$.
In Sections~\ref{sec:Preliminaries} and \ref{sec:Results__Almost_sure_convergence_of_projected_dropout_algorithms} we restrict to the case that $\sigma$ belongs to a class of polynomially bounded differentiable functions.

\begin{definition}
    For $\sigma:\R \to \R$ differentiable, denote the $l$th derivative of $\sigma$ by $\sigma^{(l)}$.
    The set of polynomially bounded maps with continuous derivatives up to order $r \in \naturalNumbersZero$ is given by
    \begin{align}
        C_{\mathrm{PB}}^r(\R)
        = \bigl\lbrace \sigma \in C^r(\R) \big\vert
         &
        \forall l = 0, \ldots, r ~
        \exists k_l > 0 :
        \sup_{x \in \R} | \sigma^{(l)}(x) (1 + x^2)^{-k_l} | < \infty \bigr\rbrace.
        \nonumber
    \end{align}
    \label{def:Polynomially_bounded_maps}
\end{definition}

Note that the linear and sigmoid activation function both belong to $C_{PB}^r(\R)$ for any $r \in \naturalNumbersZero$.
Also, any polynomial activation function $P(x) \in \R[x]$ belongs to $C_{\mathrm{PB}}^{\mathrm{deg}(P)}(\R)$.
The \gls{ReLU} activation function is not in $C_{\mathrm{PB}}^r(\R)$ for any $r \in \naturalNumbersZero$.
However, because the class $C_{\mathrm{PB}}^r(\R)$ contains polynomials of any degree, we can approximate cases such as \gls{ReLU} by using, e.g., the softplus activation function $\sigma_t(x) = \log(1+\exp(tx))/t$, which satisfies that $\lim_{t \to \infty} \sigma_t(x) = \mathrm{ReLU}(x)$ for every $x \in \R$.
Note that the softplus activation function belongs to $C_{\mathrm{PB}}^2(\R)$.

\subsection{Backpropagation, and \texorpdfstring{\gls{SGD}}{stochastic gradient descent}}

In \Cref{sec:dropout_in_NN_intro} we have defined the risk $\mathcal{U}(W)$ that in the previous notation now depends on a loss $l: \R^{d_L} \times \R^{d_L} \to \R$.
Throughout this article, we will specify the Euclidean $\ell_2$-norm $l(x,y) \triangleq \pnorm{x - y}{2}^2$ as our loss function of interest without loss of generality.
\footnote{The results can be extended to other smooth loss functions $l(x,y)$ whose partial derivatives can be bounded by polynomials of finite degree.}

Furthermore, in the definition of $\mathcal{U}(W)$ in \eqref{eqn:Oracle_risk_function}, we make no distinction between an oracle risk function or empirical risk function.
Both situations are covered by the definition in \eqref{eqn:Oracle_risk_function}.
Hence, our results cover the empirical risk case when we have a finite number of samples, as well as the online learning case, where a new sample is provided at each step of \gls{SGD}.
What we do assume is that one has the ability to repeatedly draw independent and identically distributed samples either distribution.

In an attempt to find a critical point in the set $\arg \min_W \mathcal{U}(W)$, as mentioned in \eqref{sec:dropout_in_NN_intro}, \gls{SGD} is commonly used.
Let $\process{ (\itr{Y}{t}, \itr{X}{t}) }{t \in \naturalNumbersPlus}$ be a sequence of independent copies of $(X,Y)$, let $\itr{W}{0} \in \mathcal{W}$ be an arbitrary nonrandom initialization of the weights.
For $i = 1, \ldots, L$, $r = 1, \ldots, d_{i+1}$, $l=1,\ldots, d_{i}$, the weights are iteratively updated according to
\begin{equation}
    \itr{W_{i,r,l}}{t+1}
    = \itr{W_{i,r,l}}{t} - \itrd{\alpha}{t+1} \bigl( \mathrm{B}_{\itr{W}{t}}( \itr{X}{t+1}, \itr{Y}{t+1} ) \bigr)_{i,r,l}
    \quad
    \label{update_sgd}
\end{equation}
for $t = 0, 1, 2$, \emph{et cetera}.
Here $\{ \itrd{\alpha}{t} \}_{t \in \naturalNumbersPlus }$ denotes a positive, deterministic step size sequence, and the estimate of the gradient $B_W(\cdot,\cdot) = \nabla_{W} l(\Psi_W(\cdot),\cdot)$ is computed using the backpropagation algorithm, which is given in Definition~\ref{definition:Backpropagation} in \refAppendixSection{sec:Appendix_Backpropagation}.
The stochastic gradient is an unbiased estimate of the gradient of $\mathcal{U}(W)$.
In particular, we have
\begin{equation}
    \expectation{ \bigl( \mathrm{B}_W( X, Y ) \bigr)_{i,r,l} }
    = \expectationBig{\frac{ \partial l(\Psi_W(x),y) }{ \partial W_{i,r,l} }} = \frac{ \partial \mathcal{U}(W) }{ \partial W_{i,r,l} }
    = (\nabla U)_{i,r,l}.
    \label{eqn:Backpropagation_calculates_the_gradient_operator_pointwise}
\end{equation}

\takeout{
    The algorithm in \eqref{update_sgd} is a step in a \gls{SGD} algorithm.
    In particular, we have
    \begin{equation}
        \bigl( \mathrm{B}_W(x,y) \bigr)_{i,r,l}
        = \frac{ \partial l(\Psi_W(x),y) }{ \partial W_{i,r,l} }
        \label{eqn:Backpropagation_calculates_the_gradient_operator_pointwise}
    \end{equation}
    since $\sigma \in C^1(\R)$ by assumption.
    By substituting \eqref{eqn:Backpropagation_calculates_the_gradient_operator_pointwise} into \eqref{update_sgd}, one identifies the \gls{SGD} algorithm.
    Under additional assumptions on the distribution $\mu$ and by linearity of the expectation and gradient operators, one can then furthermore see that
    \begin{equation}
        \expectation{ \bigl( \mathrm{B}_W( X, Y ) \bigr)_{i,r,l} }
        = \frac{ \partial \mathcal{U}(W) }{ \partial W_{i,r,l} }
        = (\nabla U)_{i,r,l}
        ,
    \end{equation}
    which suggests that $\itr{W}{t}$ may converge to the critical set $\{ W ~ : ~ \nabla \mathcal{U}(W) = 0 \}$.
    However, there is no guarantee that the iterates $\itr{W}{t}$ in \eqref{update_sgd} converge to a point in $\arg \min_W \mathcal{U}(W)$ because \eqref{eqn:Oracle_risk_function} is not convex.
    This is one reason that the surprising success of \eqref{update_sgd} in \glspl{NN} is still a key question in the field of machine learning; as a starting point an interested reader may look at e.g.\ \citep{gunasekar2017implicit}.
}

\subsection{Dropout algorithms, and their risk functions} \label{section:generalized_dropout}

Dropout algorithms use $\{ 0, 1 \}$-valued random matrices as filters of weights during the backpropagation step of \gls{SGD}.
More precisely, we examine the following class of dropout algorithms.
Let $(F,X,Y) : \Omega \to \{0,1\}^{d_L \times d_{L-1}} \times \ldots \times \{0,1\}^{d_1 \times d_{0}} \times \realNumbers^{d_0} \times \realNumbers^{d_L}$ be a random variable on the probability space $(\Omega, \mathcal{F}, \mathbb{P})$.
Here, we write $F = ( F_L, \ldots, F_1 )$ and $F_{i+1} \in \{ 0, 1 \}^{d_{i+1} \times d_{i}}$ for $i = 0, \ldots, L-1$, similar to how we notate weight matrices.
Let $\{ ( \itr{F}{t}, \itr{X}{t}, \itr{Y}{t} )\}_{t \in \naturalNumbersPlus }$ be a sequence of independent copies of $(F,X,Y)$.
In tensor notation, the weights are updated by using \eqref{eqn:Update_rule_for_Wtp1_in_terms_of_random_direction_Delta} with the random direction $\itr{\Delta}{t+1}$ for dropout given in \eqref{eqn:Random_direction_Delta_as_function_of_Backpropagation}.
For each dropout algorithm a different filter distribution will be chosen.
We can mention a few:

\begin{itemize}
    \item[(i)] In canonical \emph{Dropout} \citep{hinton2012improving}, $F_{i,r,l^{\prime}} = F_{i,r,l} \sim \mathrm{Bernoulli}(p)$ for any $l,l^{\prime} \in [d_i]$ with $p = 1/2$.
    \item[(ii)] In \emph{Dropconnect} \citep{wan2013regularization}, $F_{i,r,l} \sim \mathrm{Bernoulli}(p)$ for all $i, r, l$ with $p = 1/2$.
    \item[(iii)] In \emph{Cutout} \citep{devries2017improved}, $ F_{1,r,l} = 0$ whenever $\abs{r - S_1} < c$, $c \in \N_{+}$ and $\abs{l - S_2} < c$ with $(S_1, S_2) \sim \mathrm{Uniform}([d_1] \times [d_0])$.
\end{itemize}

In fact, the class of dropout algorithms we consider is quite large.
For example, $\itr{F}{t}$ can depend on $(\itr{X}{t},\itr{Y}{t})$, and $\itr{F_i}{t}$ does not need to have the same distribution as $\itr{F_j}{t}$ for $i \neq j$.
Recall, however, that if for some filter $\itr{F}{t+1}_{i,r,l} = 0$ for some $i,r,l$, then in \eqref{eqn:Update_rule_for_Wtp1_in_terms_of_random_direction_Delta} , $\itr{ \Delta }{t}_{i,r,l} = 0$ and we have $\itr{W}{t}_{i,r,l} = \itr{W}{t+1}_{i,r,l}$.
In other words, filtered variables are not updated with these dropout algorithms.

If $\itr{F}{t}$ is independent of $( \itr{X}{t}, \itr{Y}{t} )$ for each $t \in \naturalNumbersZero$ and $\Omega$ countable, then the dropout algorithm's risk function in \eqref{eqn:Dropouts_emperical_risk_function} simplifies to
\begin{equation}
    \mathcal{D}(W)
    = \sum_f \probability{F = f } \sum_{x,y} l(\Psi_{f \odot W}(x),y) \probability{ (X,Y) = (x,y) }
    .
\end{equation}
Here the sums are over all possible outcomes of the random variables $F$ and $(X,Y)$, respectively.
One implication of Proposition~\ref{prop:dropout_converges} in the result of the next \Cref{sec:Results__Almost_sure_convergence_of_projected_dropout_algorithms} is that dropout algorithms of the kind in \eqref{eqn:Update_rule_for_Wtp1_in_terms_of_random_direction_Delta}, \eqref{eqn:Random_direction_Delta_as_function_of_Backpropagation} converge to a critical point of \eqref{eqn:Dropouts_emperical_risk_function}.
 \section{Convergence of projected dropout algorithms}
\label{sec:Results__Almost_sure_convergence_of_projected_dropout_algorithms}

Our first result pertains to the convergence of dropout algorithms for a wide range of activation functions and dropout filters.
While convergence is expected in practice, we prove such convergence rigorously.
In order to control the iterates of the stochastic algorithm, we project the iterates into a compact set.
The projection assumption is common when investigating the convergence of stochastic algorithms \citep{kushner2003stochastic,borkar2009stochastic,bertsekas1995neuro,oymak2018learning}; it essentially bounds the weights.
For example, for $\itr{V}{t} \in \R$ and an update function $f:\R \to \R$, $f(\itr{V}{t})$ is projected onto an interval $[a,b]$ is by clipping and setting $\itr{V}{t+1} = \min\{ \max\{ f( \itr{V}{t} ), a \}, b \}$.
There are also results involving generalization bounds for \glspl{NN} where bounded weights play a role in controlling the learning capacity of the \gls{NN} \citep{neyshabur2015norm}.

\subsection{Almost sure convergence}
We first consider the notation and assumptions regarding the projection step of \gls{SGD}.
Let $\mathcal{H} \subseteq \mathcal{W}$ be a convex compact nonempty set and let $\mathrm{P}_{\mathcal{H}} : \mathcal{W} \to \mathcal{H}$ be the projection onto $\mathcal{H}$.
By compactness and convexity of $\mathcal{H}$, the projection is unique.
In a projected dropout algorithm, the weight update in \eqref{eqn:Update_rule_for_Wtp1_in_terms_of_random_direction_Delta} is replaced by \eqref{update_psgd}.
Because of the projection, our analysis will tie the limiting behavior of \eqref{update_psgd} to a \emph{projected} \gls{ODE}.
To state such type of \gls{ODE}, we need to define a \emph{constraint term} $\pi(W)$, which is defined as the minimum vector required to keep the solution of the gradient flow
\begin{equation}
    \frac{\d{W}}{\d{t}}
    =
    - \nabla_W \mathcal{D}|_{\mathcal{H}}(W) + \pi(W)
    \label{eqn:ODE_dropout}
\end{equation}
in $\mathcal{H}$.
\refAppendixSection{sec:Projection_operator} defines the projection term carefully for the case that $\mathcal{H}$'s boundary is piecewise smooth.
Finally, define the set of stationary points
\begin{equation}
    S_\mathcal{H}
    \triangleq \{ W \in \mathcal{H} ~ : ~ - \nabla_W \mathcal{D}|_{\mathcal{H}}(W) + \pi(W) = 0 \}
    .
\end{equation}
The set $S_\mathcal{H}$ can be divided into a countable number of disjoint compact and connected subsets $S_1, S_2, \cdots$, say.
We choose the following set of assumptions:
\begin{itemize}
    \itemsep-0.25em
    \item[(N1)] $\sigma \in C^2_{\mathrm{PB}}(\R)$.
    \item[(N2)] $\expectation{ \pnorm{Y}{2}^m \pnorm{X}{2}^n } < \infty \, \forall m \in \{ 0, 1, 2 \}, n \in \naturalNumbersZero$.
    \item[(N3)] The random variables $(\itr{F}{t};\itr{X}{t}; \itr{Y}{t})_{t \in \N}$ are independent copies of $(F,X,Y)$.
    \item[(N4)] The step sizes $\itrd{\alpha}{t}$ satisfy
        \begin{equation}
            \label{eqn:Step_sizes_diverge_but_not_too_fast}
            \sum_{t=1}^\infty \itrd{\alpha}{t} = \infty,
            \quad
            \sum_{t=1}^\infty ( \itrd{\alpha}{t} )^2 < \infty.
        \end{equation}
    \item[(N5)] $\sigma \in C^r_{\mathrm{PB}}(\R)$, with $\dim(\mathcal{W}) \leq r$.
    \item[(N6)] $- \nabla_W \mathcal{D}|_\mathcal{H}(W) + \pi(W) \neq 0$ whenever $\nabla_W \mathcal{D}|_\mathcal{H}(W) \neq 0$.
\end{itemize}

We are now in position to state our first result:

\begin{proposition}
    \label{prop:dropout_converges}
Let $\process{ \itr{W}{t} }{ t \in \naturalNumbersZero }$ be the sequence of random variables generated by \eqref{update_psgd} with \eqref{eqn:Random_direction_Delta_as_function_of_Backpropagation} on a probability space $(\Omega, \mathcal{F}, \mathbb{P})$.
    Under assumptions (N1)--(N4) , there is a set $N \subset \Omega$ of probability zero such that for $\omega \not\in N$, $\{ \itr{W}{t}(\omega) \}$ converges to a limit set of the projected \gls{ODE} in \eqref{eqn:ODE_dropout}.
If moreover (N5)--(N6) hold, then for almost all $\omega \in \Omega$, $\{ \itr{W}{t}(\omega) \}_{t \in \N}$ converges to a unique point in $\{ W \in \mathcal{H} | \nabla \mathcal{D}|_{\mathcal{H}}(W) = 0 \}$.
\end{proposition}

Theoretically, Proposition~\ref{prop:dropout_converges} guarantees that projected dropout algorithms converge for regression with the $\ell_2$-norm almost surely.
Proposition~\ref{prop:dropout_converges} implies that if one is using a regular \emph{nonprojected} dropout algorithm and one sees that the iterates $\process{ \itr{W}{t} }{t > 0}$ are bounded, then these iterates are in fact converging to a stationary point of \eqref{eqn:Dropouts_emperical_risk_function}.
Assumptions (N5)--(N6) are technical but are expected to hold in many cases.
In particular, (N5) holds for the uniformly convergent approximation to a \gls{ReLU} activation function given by softplus $\sigma_t(x) = \log(1 + \exp(tx))/t$, and holds for many smooth activation functions.
Also (N6) is expected to hold when $\mathcal{H}$ is generic polytope for which the gradient $\nabla \mathcal{D}$ is not exactly orthogonal to the normal to the surface.

Observe also that Proposition~\ref{prop:dropout_converges} holds remarkably generally.
For example, the dependence structure of $(F,X,Y)$ as random variables is not restricted; it covers commonly used dropout algorithms such as \emph{Dropout}, \emph{Dropconnect}, and \emph{Cutout}; and it holds for differentiable activation functions.
Proposition~\ref{prop:dropout_converges} includes also online and offline learning, depending on the distribution $\mu$ from which we sample.

Our proof of Proposition~\ref{prop:dropout_converges} is in \refAppendixSection{appendix:Proof_that_dropout_converges} and relies on the framework of stochastic approximation in \cite[Theorem~2.1, p.~127]{kushner2003stochastic}.
In the background the stochastic process $\process{ \itr{W}{t} }{t > 0}$ is being scaled in both parameter space and time so that the resulting sample paths provably converge to the gradient flow in \eqref{eqn:ODE_dropout}.
Examining the proof, we expect that Proposition~\ref{prop:dropout_converges} can be extended to cases where the filters as random variables have finite moments, for example, when they are Gaussian distributed \citep{molchanov2017variational}.
Concretely, the proofs of Lemmas~\ref{lem:Boundedness_of_the_variance_of_stochastic_iterands} and \ref{lem:Expectation_cond_Ft_of_Delta_i} in \refAppendixSection{appendix:Proof_that_dropout_converges} rely only on the assumption that $F$ has finite moments, and may therefore be extended.

\subsection{Generic sample complexity for dropout \texorpdfstring{\gls{SGD}}{SGD}}

Examining Proposition~\ref{prop:dropout_converges}, we note that it does not give insight into the convergence rate or the precise stationary point of $\mathcal{D}(W)$ to which the iterates $\{ \itr{W}{t} \}$ converge.
A related goal in stochastic optimization is to ask for the number of iterations of \eqref{eqn:Update_rule_for_Wtp1_in_terms_of_random_direction_Delta} required to achieve a point close to stationarity in expectation, also referred to the sample complexity of the algorithm.
We say $W \in \mathcal{W}$ is an $\epsilon$-stationary point of a differentiable function $\mathsf{D}$ if $\pnorm{\nabla \mathsf{D}(W)}{2} \leq \epsilon$ holds.
For nonconvex functions $\mathsf{D}$ with a Lipschitz continuous gradient $\nabla \mathsf{D}$, \gls{SGD} convergence to an $\epsilon$-stationary point in expectation can be achieved in $O(\epsilon^{-4})$ iterations; see \cite{bottou2018optimization,drori2020complexity}.

We will consider nonconvex functions with a Lipschitz continuous gradient and assume that the filters $F$ and the data $Z=(X,Y)$ are independent.
We will also assume that the distribution of $Z$ is well-behaved so as to guarantee that we also have the following relations for the functions $r, \mathsf{U}$ and $\mathsf{D}$:
\begin{align}
    \mathsf{U}(W)        & = \expectationWrt{r(W, Z)}{Z}, \nonumber                                                                            \\
    \mathsf{D}(W)        & = \expectationWrt{\mathsf{U}(F \odot W)}{F}, \ \text{ and}                                                          \\
    \nabla \mathsf{D}(W) & = \expectationWrt{F \odot \nabla \mathsf{U}(F \odot W)}{F} = \expectationWrt{F \odot \nabla r(F \odot W, Z)}{F, Z}.
    \nonumber
\end{align}
Note that the function $r$ in this setting includes the loss function formulation from \eqref{eqn:Oracle_risk_function} with
\begin{equation}
    r(W, Z) = l( \Psi_{W}(X), Y), \quad \text{ and } \quad Z = (X,Y),
    \label{eqn:special_example_regression}
\end{equation}
and in general, at time $t$ the update rule will be
\begin{equation}
    \itr{W}{t+1}
    = \itr{W}{t} - \itrd{\alpha}{t+1} \itr{F}{t} \odot \nabla r \Bigl( \itr{W}{t} \odot \itr{F}{t}, \itr{Z}{t} \Bigr).
    \label{eqn:Update_rule_sample_complexity_dropout}
\end{equation}

In the case of \emph{dropout}, for example, we expect that the sample complexity of finding an $\epsilon$-stationary point for the empirical risk will change depending on the dropout probability $1-p$.
In particular, if $p \downarrow 0$ and $\pnorm{\nabla \mathsf{U}(W)}{\infty} < C$ holds for any $W \in \mathcal{W}$, then $\nabla \mathsf{D}(W) = \E_{F}[ F \odot \nabla \mathsf{U}(F \odot W) ] = O(pC)$.
On the other hand if $p \uparrow 1$, then the variance of $F \odot \nabla \mathsf{U}(F \odot W)$, will also be small.
We make these intuitions rigorous in the next proposition.
For some $N \in \N$, we let $\mathcal{W} = \R^{N}$ be the parameter space and $z \in \mathcal{Z} \subseteq \R^{d}$ a Lebesgue measurable set.
We assume the following:

\begin{itemize}
    \itemsep-0.25em
    \item[(Q1)] $r \in C^{1}(\mathcal{W}, \mathcal{Z})$ and $\sup_{W \in \mathcal{W}, Z \in \mathcal{Z}} ~ |r(W, Z)| < M$.
    \item[(Q2)] $\sup_{W \in \mathcal{W}, Z \in \mathcal{Z}}  ~ \pnorm{\nabla r(W, Z)}{2} < S$.
    \item[(Q3)] $\nabla \mathsf{U}(W)$ is Lipschitz with Lipschitz constant $\ell$ (also referred to as $\mathsf{U}$ being $\ell$-smooth).
    \item[(Q4)] The random variable $F: \Omega \to \{0,1\}^{N}$ satisfies $\expectation{F} = p(1,\ldots, 1) \in \mathcal{W}$ for $p \in (0,1]$.
    \item[(Q5)] The iterates $(\itr{W}{t})_t$ of \eqref{eqn:Update_rule_for_Wtp1_in_terms_of_random_direction_Delta} are bounded, that is, $\sup_{t} \pnorm{\itr{W}{t}}{2} < R$ almost surely.
\end{itemize}
Except for (Q4) and (Q5), all other assumptions are routinely used in sample complexity analysis.
While the assumptions of Proposition~\ref{prop:convergence_rate_generic_dropout} below hold for general nonconvex smooth functions $\mathsf{D}$, in the case of \glspl{NN} and the setting in \eqref{eqn:special_example_regression} we remark that there are examples that satisfy these assumptions such as the following one:

\begin{example}
    In a binary classification setting, the set $\mathcal{Z}$ is compact, that is, the data pairs $(x,y) \in \mathcal{Z}$ take values in a compact set where $y \in \{0,1\}$ are labels for the two classes.
    A \gls{NN}, denoted by $\tilde{\Psi}_{W}(\cdot)$, uses sigmoid activation functions $\sigma(t) = 1/1+\exp(-t)$ with output in $\R$.
    The output of $\tilde{\Psi}_{W}$ is then used for binary classification with a logistic map, that is, the predicted probability of belonging to one of the classes is given by $\Psi_{W}(x) = 1/(1 + \exp(-\tilde{\Psi}_{W}(x))$.
    In this setting, assumptions (Q1)--(Q3) will hold if the loss $l$ is also smooth (such as the $\ell_2$-loss).
    In this case, we have $\mathcal{D}(W) = \mathsf{D}(W)$ and the constants in (Q1)--(Q5) will also indirectly depend on the depth and width of the \gls{NN}.
    \label{example:NN_satisfies_conditions}
\end{example}

Regarding (Q4), note that it allows for dependencies between filters.
We also assume (Q5) for the sake of simplicity: we could instead use projected SGD with updates from \eqref{update_psgd} instead of (Q5), but using projected SGD would leave the scalings in $p$ and $T$ invariant.
\footnote{With projected SGD, we would moreover have to use the expression $\nabla \mathsf{U}^{p}(w) = (w - \mathrm{P}_{\mathcal{H}}(\itr{W}{t} - \itrd{\alpha}{t+1} \itr{\Delta}{t+1}))/\itrd{\alpha}{t+1}$, which makes the analysis more tedious.
Note that $\nabla \mathsf{U}^{p}(w) = \nabla \mathsf{U}(w)$ whenever $w \in \mathrm{int}(\mathcal{H})$.
See \cite{bubeck2015convex} for an example of such analysis.
}
Recall that $\mathsf{D}(W) = \expectationWrt{\mathsf{U}(F \odot W)}{F}$.
The proof the following proposition can be found in \refAppendixSection{sec:appendix_proof_generic_rate_dropout}.

\begin{proposition}
    Let $(\itr{F}{t})_{t \in \N}$ be a sequence of independent random variables with distribution $F$.
    Let $\itr{W}{t}$ be iterates of \eqref{eqn:Update_rule_sample_complexity_dropout}.
    Assume (Q1)--(Q5).
    Define $J = S^2 + \frac{3}
        {2}N^2(\ell^2 R^2 + 2\ell R)$.
    \\
    \noindent
    \emph{(a)}
    Let $T \in \N_{+}$.
    If $p > M\ell /(NS^2T)$, then there exists a constant stepsize $\itrd{\alpha}{t} = \eta > 0$ such that for all $t \in [T]$,
    \begin{equation}
        \min_{t \in [T]} \expectationBig{\pnorm{\nabla \mathsf{D}(\itr{W}{t})}{2}^2} \leq 4\sqrt{p(S^2 + (1-p)J)}\sqrt{\frac{M \ell N}{T}}.
    \end{equation}
    \emph{(b)}
    Let $T \geq 4$.
    There exists a sequence of decreasing stepsizes satisfying $\itrd{\alpha}{t} = 1/(\ell\sqrt{t})$ for all $t \in [T]$ such that
    \begin{equation}
        \min_{t \in [T]} \expectationBig{\pnorm{\nabla \mathsf{D}(\itr{W}{t})}{2}^2} \leq \frac{4M\ell^2 + 4Np(S^2 + (1-p)J) \log(T)}{\sqrt{T}}.
    \end{equation}
    \label{prop:convergence_rate_generic_dropout}
\end{proposition}

In Proposition~\ref{prop:convergence_rate_generic_dropout}, we observe that finding approximate stationary points is easier with a larger dropout probability $1-p$ for a wide range of filter distributions like those determining \emph{dropout} and \emph{dropconnect}, as guaranteed by (Q4).
In Proposition~\ref{prop:convergence_rate_generic_dropout}(a) we also see a dependence of the convergence rate on $\sqrt{p(S^2 + (1-p)J}$.
The term $pS^2$ corresponds to the variance of the gradient due the distribution of data in $\mathcal{Z}$ and decreases with $p$; while the term $p(1-p)J$ stems from the variance due to dropout.
Note that the sum achieves a maximum for $p \in (0,1)$.
We note that Proposition~\ref{prop:convergence_rate_generic_dropout} does not suggest that the convergence to minima is faster for smaller $p$.
In particular, saddle points can become easier to find as $p \uparrow 0$.
As seen later in the numerical experiments with \glspl{NN} in \Cref{sec:numerical_experiments}, or in similar work from \cite{Mianjy2020OnCA, senencerda2020asymptotic}, the \gls{NN} structure and data distribution can change the convergence rate dependence on the dropout probability.
As an example, in \cite{senencerda2020asymptotic} it is suggested that the convergence rate dependence on $p$ and the width of the \gls{NN} can have different regimes depending on whether we are close to a minimum or not.
Similarly, smaller $p$ does not necessarily improve generalization.
In particular, if the dropout probability $1-p$ is large, the optimization landscape will be flat with many approximate stationary points.
In this case, \gls{SGD} with dropout with a limited sample complexity of $T$ iterations will not explore the landscape as much as when using a smaller dropout probability.
With a flatter landscape in mind, it may be better in the complexity trade-off to use a larger $p$ for finding an approximate minimum and generalize better instead of finding a stationary point.

A possible approach to avoid the flattening of the landscape is to scale the weights appropriately during training.
This is, for example, what is conducted in practice in some implementations of dropout.\footnote{For example, scaling is implemented with the Dropout layer implementation in \emph{Keras}, \url{https://keras.io/}.
}
Assuming (Q4) holds, we consider the update rule
\begin{equation}
    \itr{W}{t+1}
    = \itr{W}{t} - \itrd{\alpha}{t+1} \frac{\itr{F}{t}}{p} \odot \nabla r \Bigl( \itr{W}{t} \odot \frac{\itr{F}{t}}{p}, \itr{Z}{t} \Bigr).
    \label{eqn:Update_rule_scaled_dropout}
\end{equation}

With \eqref{eqn:Update_rule_scaled_dropout}, the use of filters is compensated by increasing the size of the updates and weights accordingly.
In this case, \gls{SGD} with this update rule is actually minimizing the function
\begin{equation}
    \tilde{\mathsf{D}}(W) = \mathsf{D}\Bigl(\frac{W}{p} \Bigr),
\end{equation}
which also compensates in expectation the effect of the filters.
With the update rule in \eqref{eqn:Update_rule_scaled_dropout}, we can again obtain an expression for the complexity of finding an $\epsilon$-stationary point of $\tilde{\mathsf{D}}(W)$.
The following is proved in \refAppendixSection{sec:appendix_proof_generic_rate_dropout}:

\begin{proposition}
    Let $(\itr{F}{t})_{t \in \N}$ be a sequence of independent random variables with distribution $F$.
    Assume (Q1)--(Q5).
    Let $\itr{W}{t}$ be iterates of \eqref{eqn:Update_rule_scaled_dropout}.
    Let $T \in \N_{+}$.
    If $p > M\ell /(NS^2T)$, then there exists a constant stepsize $\itrd{\alpha}{t} = \eta > 0$ such that for all $t \in [T]$,
    \begin{equation}
        \min_{t \in [T]} \expectationBig{\pnorm{\nabla \tilde{\mathsf{D}}(\itr{W}{t})}{2}^2} \leq 4\sqrt{\frac{1}{p^3}\Bigl( S^2 + \frac{(1-p)}{p^2}\Bigl(p^2 S^2 + \frac{3}
            {2}N^2\bigl( p\ell^2 R^2 + 2\ell R\bigr) \Bigr)\Bigr)} \sqrt{\frac{M \ell N}{T}}.
    \end{equation}
    \label{prop:convergence_rate_generic_dropout_scaled}
\end{proposition}

Proposition~\ref{prop:convergence_rate_generic_dropout_scaled} shows that for the scaled dropout \gls{SGD} of \eqref{eqn:Update_rule_scaled_dropout} the complexity of finding an $\epsilon$-stationary point monotonically increases with $1-p$.
This result contrasts with Proposition~\ref{prop:convergence_rate_generic_dropout}, where a different behavior was observed.
We remark, however, that this result assumes (Q5), which for small $p$ cannot realistically hold since a bound $R$ for the norm of the weights may also scale by a factor $1/p$.
This result, just like with Proposition~\ref{prop:convergence_rate_generic_dropout}, also does not imply that good weights $W \in \mathcal{W}$ become easier to find by using the update \eqref{eqn:Update_rule_scaled_dropout}.
Indeed, scaling partially avoids the flattening of the landscape---the Lipschitz constant of $\nabla \tilde{\mathsf{D}}$ is namely scaled by a factor $1/p^2$---but the variance of \gls{SGD} due to dropout is also increased considerably.
This variance becomes dominant when the dropout rate $1-p \uparrow 1$ due to the inverse dependence on $p$ in the sample complexity.

Propositions~\ref{prop:convergence_rate_generic_dropout} and \ref{prop:convergence_rate_generic_dropout_scaled} show that the complexity of finding $\epsilon$-stationary points heavily depends on the algorithm used.
However, when we restrict the results to deep \glspl{NN} such as with Example~\ref{example:NN_satisfies_conditions}, the bounds do not provide much information on the dependence of the convergence rate on the depth of the network.
This fact also shows the limitations of using a generic sample complexity analysis.

In order to obtain an explicit convergence rate depending on the depth, we need to use the additional structure of the \gls{NN}.
In the next section we will be able to compute the convergence rate to a global minimum for \glspl{NN} that are shaped like arborescences and obtain an explicit bound that depends on the depth of the arborescence and the dropout probability.

 \section{Convergence rate of \texorpdfstring{\gls{GD} on $\mathcal{D}(W)$}{GD on D(W)} for arborescences with linear activation}
\label{sec:Results__Convergence_rate_of_GD_on_D_for_arborescences}

We obtained a convergence guarantee as well as a bound for the sample complexity of dropout in the previous section.
Next, we focus on the convergence rate of dropout in functions that model the structure of \glspl{NN}.
In particular, we will derive an explicit convergence rate for dropout algorithms in the case that we have linear activations $\sigma(z) = z$ and that the \gls{NN} is structured as an arborescence: see \Cref{fig:Arborescence_picture}.
Specifically, we will study the following regular \gls{GD} algorithm on dropout's risk function:
\begin{equation}
    \itrd{W}{t+1}
    = \itrd{W}{t} - \alpha \nabla \mathcal{D}(\itrd{W}{t}) \quad \text{for} \quad t \in \N_{0}.
    \label{eqn:iterates_gradient_descent}
\end{equation}
Here, we keep the step size $\alpha > 0$ fixed.
Note that this algorithm generates a deterministic sequence $\process{ \itrd{W}{t} }{t \in \naturalNumbersZero }$ as opposed to a sequence of random variables $\process{ \itr{W}{t} }{ t \in \naturalNumbersZero }$ as generated by \eqref{eqn:Update_rule_for_Wtp1_in_terms_of_random_direction_Delta} or \eqref{eqn:Random_direction_Delta_as_function_of_Backpropagation}.
We will use a linear activation function $\sigma(t) = t$, which combined with the arborescence structure will allow us to obtain an explicit convergence rate.
While the iterates of \eqref{eqn:iterates_gradient_descent} are not stochastic, analogous to Proposition~\ref{prop:dropout_converges}, the stochastic iterates will converge to a gradient flow of an \gls{ODE}, whose discretization is given in \eqref{eqn:iterates_gradient_descent}.
Analyzing \glspl{ODE} related to \glspl{NN} is common in literature \cite{tarmoun2021understanding,jacot2018neural}.
For more discussion on the relationship between the iterates of \eqref{eqn:iterates_gradient_descent} and dropout we refer to \refAppendixSection{sec:appendix_ode_method}.

Our main convergence result in Proposition~\ref{prop:convergence_of_dropout_on_an_expanding_tree} below holds for general distribution functions.
However, we show here the cases of \emph{Dropout} and \emph{Dropconnect}, which are most insightful.
We use the following notation adapted from graph theory.
Consider a fixed, directed \emph{base graph} $G = ( \mathcal{E}, \mathcal{V} )$ without cycles in which all paths have length $L$, which describes a \gls{NN}'s structure as follows.
Each vertex $v \in \mathcal{V}$ represents a neuron of the \gls{NN}, and each directed edge $e = (u,v) \in \mathcal{E}$ indicates that neuron $u$'s output is input to neuron $v$.
Note that to each edge $e \in \mathcal{E}$ in the \gls{NN}, a weight $W_e \in \realNumbers$ and a filter variable $F_e \in \{ 0, 1 \}$ are associated.
We will write $\mathcal{W} = \R^{|\mathcal{E}|}$ for simplicity.
For an arborescence $G$, we denote by $\mathcal{L}(G)$ the edge set of leaves.
\putin{Let $M > 2\delta > 0$ be real numbers and suppose that we initialize the weights $\{ W_e\}_{e\in \mathcal{E}}$ as follows:
\begin{align}
    M > \itrd{W_e}{0} > \sqrt{2} \delta            & \text{ for } e \in \mathcal{E}\backslash \mathcal{L}(G) \nonumber \\
    |W_l| \leq \delta/ \sqrt{\abs{\mathcal{L}(G)}} & \text{ for } l \in \mathcal{L}(G).
    \label{eqn:unbalanced_initialization}
\end{align}
}

The proof of Proposition~\ref{prop:estimate_nu_dependent_p} is deferred to \refAppendixSection{sec:Appendix__Corollary}, which is a consequence of our more general result in Proposition~\ref{prop:convergence_of_dropout_on_an_expanding_tree}.

\begin{proposition}
    \label{prop:estimate_nu_dependent_p}
    Assume that the base graph $G$ is an arborescence of depth $L$ with $\cardinality{\mathcal{L}(G)}$ leaves, the activation function $\sigma(t) = t$ is linear, $F$ is independent of $(X,Y)$, and $\{ \itrd{W}{0}_e \}_{e \in \mathcal{E}}$ is initialized according to \eqref{eqn:unbalanced_initialization}.
    If the $\{ F_e \}_{e \in \mathcal{E}}$ follow the distribution prescribed by \emph{Dropconnect} or \emph{Dropout}, then there exists $\alpha > 0$ such that the iterates of \eqref{eqn:iterates_gradient_descent} satisfy
    \begin{equation}
        \mathcal{D}(\itrd{W}{t}) - \mathcal{D}(\criticalpoint{W})
        \leq
        \bigl( \mathcal{D}(\itrd{W}{0}) - \mathcal{D}(\criticalpoint{W}) \bigr) \exp(- \omega t/2)
        .
        \label{eqn:PL_inequality_upper_bounded_and_iterated}
    \end{equation}
    with
    \begin{equation}
        \omega
        = \mathrm{O} \Bigl( \frac{p^L}{L \cardinality{\mathcal{L}(G)}^2} \Bigl(\frac{2\delta^2}{M^{2}}\Bigr)^{2L}\Bigr)
        .
    \end{equation}
\end{proposition}

\subsection{Discussion}

In Proposition~\ref{prop:estimate_nu_dependent_p} we consider the cases of \emph{Dropout} and \emph{Dropconnect}, in which nodes or edges are dropped with probability $1-p$, respectively.
Observe that the convergence rate exponent depends on $p^L$ and $(2\delta^2/M^2)^{2L}$ where $2\delta^2/M^2 < 1$; see \eqref{eqn:unbalanced_initialization}.
The first term in particular indicates that as the \gls{NN} becomes deeper, the convergence rate exponent of \gls{GD} with \emph{Dropout} or \emph{Dropconnect} will decrease by a factor $p^L$.
The second term $(2\delta^2/M^2)^{2L}$ shows the increased difficulty of training deeper \glspl{NN} and has been observed e.g., by \cite{shamir2018exponential,arora2018convergence}.
The exponential dependence in $L$ is moreover tight when using \gls{GD} and is intrinsic to the method \citep{shamir2018exponential}.
\putin{Hence, dropout adds another exponential dependence to the convergence rate in arborescences, which is due to the stochastic nature of the algorithm.
    In \Cref{fig:path_NN} an experiment confirming this intuition on the convergence rate of dropout on a single path for different depths can be seen.
}

Finally, our proofs of Proposition~\ref{prop:estimate_nu_dependent_p} and the related more general result in Proposition~\ref{prop:convergence_of_dropout_on_an_expanding_tree} below can be found in \refAppendixSection{sec:Appendix__Convergence_of_GD}.
The proof strategy is to show that a \gls{PL} inequality holds, which allows one to obtain convergence rates for \gls{GD} on nonconvex functions \citep{karimi2016linear}.
The new part of the argument is that we use conserved quantities and a double induction to identify a compact set in which the iterates remain and simultaneously a \gls{PL} inequality holds.
The method that we develop and which is sketched in the next subsection depends intricately on the arborescence structure and cannot be readily applied to other cases.

To compare this result with more realistic models, we will examine the convergence rate of dropout in deep and wide \glspl{NN} in \Cref{sec:convergence_rate_wider_NN} with a heuristic and experimental approach.

\begin{figure*}
\centering
    \begin{subfigure}{0.45\textwidth}
        \includegraphics[height=0.85\textwidth]{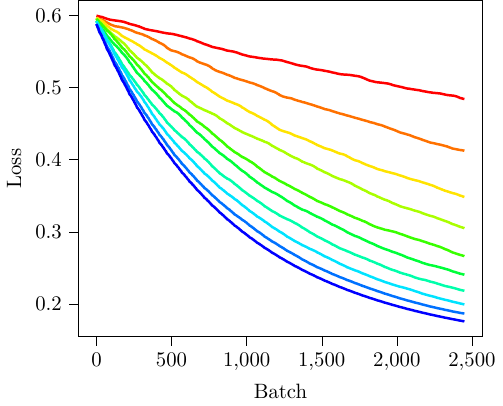}
        \caption{}
    \end{subfigure}
    \hspace{1em}
    \begin{subfigure}{0.45\textwidth}
        \includegraphics[height=0.85\textwidth]{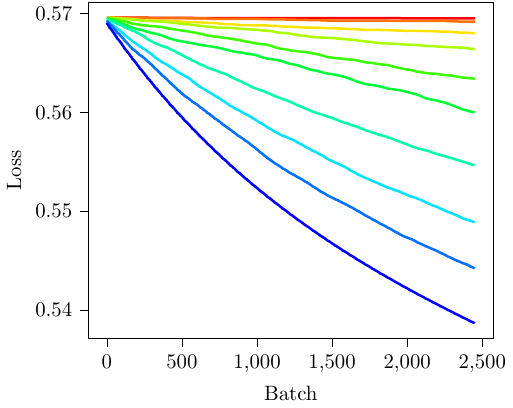}
        \caption{}
    \end{subfigure}
    ~\vspace{1em}

    \noindent
    \begin{subfigure}{0.45\textwidth}
        \includegraphics[height=0.85\textwidth]{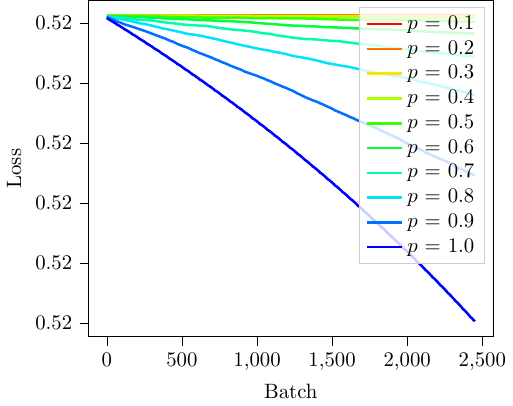}
        \caption{}
    \end{subfigure}
    \hspace{1em}
    \begin{subfigure}{0.45\textwidth}
        \includegraphics[height=0.85\textwidth]{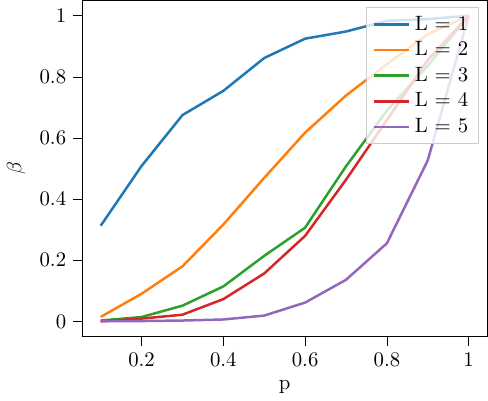}
        \caption{}
    \end{subfigure}
    \caption{The average loss depending on the number of steps of \gls{SGD} with dropout of the function $f(w) = (y - \prod_{i=1}^L w_i x)^2$ and its average convergence slope. \emph{(a)}
        The average loss for $L=1$.
        \emph{(b)}
        The average loss for $L=3$.
        \emph{(c)}
        The average loss for $L=5$.
        \emph{(d)}
        The slope $\beta$ of the fit of $y=-\beta x + \gamma$ for the curves in (a), (b) and (c).
        The slopes $\beta$ for a given $l$ have been normalized at $p=1$ for comparison across depths $L$.
        Note that for larger $L$, the effect of $p$ becomes also more pronounced.
        This is in agreement with the conclusion in \Cref{sec:Results__Convergence_rate_of_GD_on_D_for_arborescences}, where we expect a convergence rate depending on $p^{L}$.
        In this case, other effects of depth are also observed, such as a dependence on the initialization.
    }	\label{fig:path_NN}
\end{figure*}

\subsection{Sketch of the proof}

Besides the previous notation, we need to introduce notation corresponding to subgraphs and paths.
Let $\mathcal{G}$ be the set of all subgraphs of the base layered directed graph $G$ with $d$ vertices, and let $\mathcal{E}(g)$ be the set of edges of a subgraph $g \in \mathcal{G}$.
Let $\Gamma_i^j(g;e)$ be defined as the set of all paths in the directed graph $g$ that start at vertex $i$, traverse edge $e$, and end at vertex $j$.
If the origin or end vertices are in the input or output layer, the subscript or superscript is dropped from the notation, respectively.
For every path $\gamma \triangleq ( \gamma_1, \ldots, \gamma_L ) \in \Gamma(g)$, we write $P_\gamma \triangleq \prod_{e \in \gamma} W_e$ and $F_\gamma \triangleq \prod_{e \in \gamma} F_e$ for notational convenience.
Finally, let $G_F \triangleq ( \mathcal{E}_F, \mathcal{V})$ be the random subgraph of base graph $G$ that has edge set $\mathcal{E}_F \triangleq \{ e \in \mathcal{E} | F_e = 1 \}$.
We denote $\mu_g \triangleq \probability{G_F = g}$, and $\eta_\gamma \triangleq \sum_{ \{ g \in \mathcal{G} | \gamma \in \Gamma(g) \} } \mu_g$.
We first provide an explicit characterization of dropout's risk function in \eqref{eqn:Dropouts_emperical_risk_function} in terms of paths in the graph that describes the structure of the \gls{NN}.
This is possible since we assume linear activation functions.
The following lemma now holds, and is proved in \refAppendixSection{sec:Appendix__Proof_of_the_path_representation_of_DW__Tree_case}.

\begin{lemma}
    \label{lem:Path_representation_of_DW}
    Assume that the base graph $G$ is a fixed, directed graph without cycles in which all paths have length $L$ and there are $d_{L}$ output nodes (N6'), that $\sigma(t) = t$ (N7), and that $F$ is independent of $(X,Y)$ (N8).
    Then
    \begin{equation}
        \mathcal{D}(W)
        = \sum_{ g \in \mathcal{G} } \mu_{g} \expectationBig{ \sum_{s=1}^{d_L} \bigl( Y_s - \sum_{ \gamma \in \Gamma^s(g) } P_\gamma X_{\gamma_0}\bigr)^2}.
        \label{eqn:Path_representation_of_DW__Tree_case}
    \end{equation}
    Moreover $\mathcal{D}(W) = \mathcal{J}(W) + R(W)$, where
    \begin{align}
        \mathcal{J}(W)
         &
        = \sum_{\gamma \in \Gamma(G)} \eta_{\gamma} \expectation{ ( Y_{\gamma_L} - P_\gamma X_{\gamma_0} )^2 },
        \label{eqn:Regularization_term_in_terms_of_paths}
        \\
        R(W)
         &
        = - \sum_{ g \in \mathcal{G} } \mu_g \expectationBig{ \sum_{s=1}^{d_L} \sum_{ \gamma \in \Gamma^s(g) } \Bigl( \Bigl( 1 - \frac{1}{ \cardinality{ \Gamma^{s}(g) } } \Bigr) Y_{s}^2
            - P_\gamma X_{\gamma_0} \sum_{ \delta \in \Gamma^{s}(g) \backslash \{ \gamma \} } P_\delta X_{\delta_0} \Bigr)}.
    \end{align}
    Here, the constants $\eta_{\gamma}, \mu_{\gamma}$ depend explicitly on $F$'s distribution and the \gls{NN}'s architecture.
\end{lemma}

Note that Lemma~\ref{lem:Path_representation_of_DW} essentially changes variables to rewrite the dropout risk function as a sum over paths instead of a sum over graphs.
This representation allows us to clearly identify the regularization term $R(W)$.
For example in the case of \emph{Dropconnect} \citep{wan2013regularization}, where the filter variables $\{ F_e \}_{e \in \mathcal{E}}$ are independent random variables with distribution $\mathrm{Bernoulli}(p)$, Lemma~\ref{lem:Path_representation_of_DW} holds with $\mu_g = p^{ \cardinality{ \mathcal{E}(g) } } ( 1 - p )^{ \cardinality{ \mathcal{E}(G) } - \cardinality{ \mathcal{E}(g) } }$.
Also note that if for all subgraphs $g \in \mathcal{G}$ and vertices $i \in [d]$ the number of paths that end at $i$ satisfies $\cardinality{ \Gamma^i(g) } = 1$ , such as when $G$ is an arborescence, then for all subgraphs $g \in \mathcal{G}$ and paths $\gamma \in \Gamma(g)$ there is only one path ending at a leave node $\gamma_L$, that is, $\Gamma^{\gamma_L}(g) = \{ \gamma \}$.

We now focus on a base graph that is an arborescence of arbitrary depth; see \Cref{fig:Arborescence_picture}.
Hence we now replace (N6') in Lemma~\ref{lem:Path_representation_of_DW} that assumes a generic graph by assumption (N6), where $G$ is specifically an arborescence.
The following specification of Corollary~\ref{cor:Path_representation_of_DW__Tree_case_arborescence} is also proven in \refAppendixSection{sec:Appendix__Proof_of_the_path_representation_of_DW__Tree_case}.

\begin{corollary}
    \label{cor:Path_representation_of_DW__Tree_case_arborescence}
    Assume that the base graph $G$ is an arborescence of depth $L$ (N6), and (N7)--(N8) from Lemma~\ref{lem:Path_representation_of_DW}.
    Then $\mathcal{D}(W) = \mathcal{I}(W) + \mathcal{D}(\criticalpoint{W})$, where
    \begin{gather}
        \mathcal{I}(W)
        \triangleq
        \sum_{\gamma \in \Gamma(G)} \nu_{\gamma} (z_{\gamma} - P_{\gamma})^2
        ,
        \nonumber \\
        \mathcal{D}(\criticalpoint{W})
        =
        \sum_{\gamma \in \Gamma(G)} \eta_{\gamma} ( \expectation{Y_{\gamma_L}^2} - {\expectation{Y_{\gamma_L} X_{\gamma_0}}^2} / {\expectation{X_{\gamma_0}^2}} )
        ,
    \end{gather}
    and
    $
        \nu_{\gamma} \triangleq \eta_{\gamma} \expectation{X_{\gamma_0}^2}
    $,
    $
        z_{\gamma} \triangleq {\expectation{Y_{\gamma_L} X_{\gamma_0}}} / {\expectation{X_{\gamma_0}^2}}
    $ for $\gamma \in \Gamma(G)$.
    Consequently, $R(W) = 0$ for an arborescence.
\end{corollary}

The convergence result we are about to show uses the fact that for the system of \glspl{ODE} $\d{W} / \d{t} = - \nabla_W \mathcal{D}(W)$ there are conserved quantities.
Within the proof, these conserved quantities have the crucial role of guaranteeing compactness for the iterates.
Specifically, let $\mathcal{L}(g;f)$ denote the leaves of the subtree of $g \in \mathcal{G}$ rooted at a vertex $f \in \mathcal{E}(g)$, and define the set of leaves of $G$ as $\mathcal{L}(G) \triangleq \cup_{f \in \mathcal{E}} \mathcal{L}(G;f)$.
We remark that in the previous notation $d_L = |\mathcal{L}(G)|$.
For $W \in \mathcal{W}$ and each leaf $f \in \mathcal{E} \backslash \mathcal{L}(G)$, define the quantity
\begin{equation}
    C_f
    =
    C_f(W)
    \triangleq
    W_f^2 - \sum_{ l \in \mathcal{L}(G;f) } W_l^2
    .
    \label{eqn:definition_first_integrals}
\end{equation}
Define $C_{\min}
    \triangleq \min_{e \in \mathcal{E} \backslash \mathcal{L}(G)} C_e$ and $\itrd{C_e}{t} = C_e(\itrd{W}{t})$ for $t \in \naturalNumbersPlus$ also, both of which we require later.
Lemma~\ref{lem:Conserved_quantities_in_a_tree} now proves that the function $C_f$ in \eqref{eqn:definition_first_integrals} is a conserved quantity; the proof is in  \refAppendixSection{sec:Appendix__Proof_of_Conserved_quantities_in_a_tree}.

\begin{lemma}
    \label{lem:Conserved_quantities_in_a_tree}
    Assume (N2) from Proposition~\ref{prop:dropout_converges}, (N6) from \refCorollary{cor:Path_representation_of_DW__Tree_case_arborescence} , (N7), (N8) from Lemma~\ref{lem:Path_representation_of_DW}.
    Then under the negative gradient flow $\d{W}/\d{t} = - \nabla \mathcal{D}(W)$,
    \begin{equation}
        \frac{ \d{C_f} }{ \d{t} }
        = 0
    \end{equation}
    for all $f \in \mathcal{E} \backslash \mathcal{L}(G)$.
\end{lemma}

We are almost in position to state our second result, but need to introduce still some notation.
We define the following constants
\begin{equation}
    \pnorm{\nu}{1}
    \triangleq
    \sum_{\gamma \in \Gamma(G)} \nu_{\gamma}
    ,
    \quad
    \nu_{\min}
    \triangleq
    \min_{\gamma \in \Gamma(G)} \nu_{\gamma},
    \quad
    \nu_{\max}
    \triangleq
    \max_{\gamma \in \Gamma(G)} \nu_{\gamma},
\end{equation}
for notational convenience.
Also, for $0 < \delta < M$, we define
\begin{equation}
    \mathcal{S}
    \triangleq \{ W \in \mathcal{W} ~ : ~
    M > \abs{W_f} > \delta > 0 \, \ \forall f \in \mathcal{E}(G) \backslash \mathcal{L}(G); \allowbreak
    M > \abs{W_f} \, \forall f \in \mathcal{L}(G)
    \}
    ,
\end{equation}
a bounded set of parameters where if the weight is associated with a leaf, they are furthermore bounded away from zero.
Let finally
\begin{align}
    B(\epsilon, I)
    \triangleq
    \Bigl\{ &
    W \in \mathcal{W} ~ : ~
    \mathcal{I}(W) \leq \epsilon,
    W_f^2 - \sum_{l \in \mathcal{L}(G;f)} W_{l}^2 \in I_f \text{ for } f \in \mathcal{E} \backslash \mathcal{L}(G)
    \Bigr\}
    \label{eqn:definition_compact_set_wrt_I}
\end{align}
denote the set of all weight parameters that are $\eps$-close to a critical point and for which the conserved quantities in \eqref{eqn:definition_first_integrals} deviate by no more than $\bigO{ \itrd{C_f}{0} }$ from their initial value $\itrd{C_f}{0}$.
These deviations are made explicit by the intervals
\begin{equation}
    I_f
    \triangleq [\itrd{C_{f}}{0}/2, 3\itrd{C_{f}}{0}/2] \textnormal{ for }
    f \in \mathcal{E} \backslash \mathcal{L}(G)
    ,
    \textnormal{ and the set }
    I
    \triangleq \times_{f \in \mathcal{E} \backslash \mathcal{L}(G)} I_f \subseteq \R^{\cardinality{\mathcal{E}} - \cardinality{\mathcal{L}(G)}}
    .
\end{equation}
Our proof shows that the iterates $\process{ \itrd{W}{t} }{t \geq 0}$ stay in the intersection $\mathcal{S} \cap B(\eps,I)$, and this implies that the weights (including those associated with the leaves) remain bounded.
The following now holds, and its proof can be found in \refAppendixSection{sec:Appendix__Convergence_of_GD}.

\begin{proposition}
    \label{prop:convergence_of_dropout_on_an_expanding_tree}
    Assume (N2) from Proposition~\ref{prop:dropout_converges}, (N6) from \refCorollary{cor:Path_representation_of_DW__Tree_case_arborescence}, (N7)--(N8) from Lemma~\ref{lem:Path_representation_of_DW}, that $\itrd{W}{0} \in \mathcal{S} \cap B(\epsilon, I)$ and $M^{L} \geq \abs{z_{\gamma}}$ for all $\gamma \in \Gamma(G)$ (N9), that $\frac{1}{2} C_{\min}(\itrd{W}{0}) > \delta^2$ (N10).
    If
    \begin{align}
        \alpha
        \leq
        \min \Bigl( \nu_{\min} \frac{ \e{1/2}(\itrd{C_{\min}}{0})^{L} }{16\norm{\nu}_1 L M^{2(L-1)} \mathcal{I}( \itrd{W}{0})}, \frac{1}{12 \nu_{\max} \abs{\mathcal{E}} \abs{\Gamma(G)} M^{2(L-1)}}, \frac{1}{2 \nu_{\min} (\itrd{C_{\min}}{0})^{L-1}} \Bigr)
        ,
        \label{eqn:Convergence_of_Dropout_tree_step_size}
    \end{align}
    then the iterates of \eqref{eqn:iterates_gradient_descent} satisfy
    \begin{equation}
        \mathcal{D}(\itrd{W}{t}) - \mathcal{D}(\criticalpoint{W})
        \leq
        \bigl( \mathcal{D}(\itrd{W}{0}) - \mathcal{D}(\criticalpoint{W}) \bigr) \exp(- \tfrac{ \alpha \tau }{2} t)
        .
        \label{eqn:PL_inequality_upper_bounded_and_iterated_full}
    \end{equation}
    where $\tau = 4 \nu_{\min} \exp(-1/2) (\itrd{C_{\min}}{0})^{L-1}$.
\end{proposition}

Proposition~\ref{prop:convergence_of_dropout_on_an_expanding_tree} identifies explicitly how the convergence rate of \gls{GD} on a dropout's risk function depends on the dropout algorithm and the structure of the arborescence: parameters such as $p, |\mathcal{L}(G)|, L$ are implicitly present in the constants $\nu_{\min}$ and $\pnorm{\nu}{1}$ in $\alpha, \tau$.

Note that Assumptions (N9)--(N10) are relatively benign.
These assumptions are for example satisfied when initializing $M > \itrd{W_e}{0} > \sqrt{2} \delta$ for $e \in \mathcal{E}\backslash \mathcal{L}(G)$ and setting $|W_l| \leq \delta/ \sqrt{\abs{\mathcal{L}(G)}}$ for all $l \in \mathcal{L}(G)$ and $\epsilon = \mathcal{I}(\itrd{W}{0})$, which we assume in Proposition~\ref{prop:estimate_nu_dependent_p}.
In other words, this initialization sets the weights that are associated with leaves small compared to all other weights.

\section{Effect of dropout on the convergence rate in wider networks}
\label{sec:convergence_rate_wider_NN}
In Proposition~\ref{prop:convergence_of_dropout_on_an_expanding_tree}, we have proven that the convergence rate depends on $p^L$ for \glspl{NN} shaped like arborescences.
Let $G_{\mathrm{tree}}$ be a tree and $e \in \mathcal{E}(G_{\mathrm{tree}})$ be an edge.
Denote by $\itr{\Gamma}{t}(e)$ the set of paths passing through $e$ that are not filtered by dropout at time $t$.
We observe that at any given time $t$ of dropout \gls{SGD},
\begin{equation}
    \probability{\itr{w}{t}_e \text{ is updated}} =
    \probability{\itr{\Gamma}{t}(e) \neq \emptyset} = p^{L}.
\end{equation}

If we denote by $t_{\mathrm{update}}(G_{\mathrm{tree}}) = 1/p^{L}$ the average update time for a weight in $G_{\mathrm{tree}}$, then we need $1/p^{L}$ more time on average for a given edge to be updated than when we do not use dropout.
For wider networks $G$, however, edges can be updated simultaneously and repeatedly via different available paths.
By the previous intuition we might still expect that, if the updates are sufficiently independent, the convergence rate depends approximately on $1/t_{\mathrm{update}}$.
In order to verify this intuition we will determine $t_{\mathrm{update}}$ for \glspl{NN} that are much wider than deep, and later simulate their convergence rates also in realistic settings.

Suppose now that $G$ is a graph of a fully-connected \gls{NN} with $L$ dropout layers each of which has width $D$.
For each of the vertices $u \in G$ in a dropout layer, there is an associated dropout filter variable $F_{u} \sim_{\mathrm{i.i.d.
        }} \mathrm{Ber}(p)$  where $p > 0$ is fixed.
That is, we use \emph{dropout}.
Note that any other additional input or output layer without filters only changes the number of paths by a multiplicative factor.
Hence, we will restrict to the case that all nodes in the layers have filter variables.
In this case, we may consider a path $\gamma = (u_1, \ldots, u_{L})$ as a set of $L$ vertices---one for each dropout layer---instead of edges.
For two paths $\gamma$ and $\delta$, we consider their intersection $\gamma \cap \delta$ as the subset of vertices belonging to both paths.
Hence, $|\gamma \cap \delta| = l$ implies that the intersection has $l$ vertices, not necessarily forming a path.

We remark that we can restrict to the case $L > 2$.
In the case of one dropout layer $L=1$, an edge $e = (u,v)$ conected to a dropout node $u$ is updated if and only if the filter $F_u = 1$, where $u \in G$ is the adjacent vertex to $e$ with a dropout filter, so that in this case $\probability{\itr{w}{t}_e \text{ is updated}} = 1-p$.
For $L=2$, an edge $e=(u,v)$ is updated if and only if $F_u = F_v = 1$, so that $\probability{\itr{w}{t}_e \text{ is updated}} = 1-p^2$.
Recall that we denote by $\Gamma(e)$ the set of paths $\gamma$ of $G$ passing through $e$.
For a path $\gamma \in \Gamma(e)$, in the following, we let $F_{\gamma} = \prod_{u \in \gamma} F_{u}$ be the indicator of a path being filtered.
Thus, $F_{\gamma}$ is $1$ is $\gamma$ is not filtered and $0$ otherwise.
We will use Greek letters for paths and Latin letters for vertices when referring to filters $F_{\gamma}$ and $F_{u}$ respectively.

\begin{lemma}
    Let $G$ be a graph of a fully-connected \gls{NN} with $L > 2$ dropout layers, each with the same width $D$ and with dropout filters $F_u$ for $u \in G$.
    For an edge $e \in \mathcal{E}(G)$, let $F_{\Gamma(e)} = \sum_{\gamma \in \Gamma(e)} F_{\gamma}$ denote the random variable that counts the number of nonfiltered traversing paths through $e$.
    If $L, p$ are fixed, then as $D \to \infty$,
    \begin{equation}
        \probability{F_{\Gamma(e)} = 0} = 1-p^2 + O\Bigl(\frac{pL}{D}
        \Bigr).
    \end{equation}
    \label{lem:probability_zero_paths}
\end{lemma}
\begin{proof}
    We will use the Paley--Zygmund inequality.
    For a nonnegative random variable $Z$ with finite second moment, for any $\theta \in (0,1)$,
    \begin{equation}
        \probability{Z > \theta \E[Z]} \geq (1- \theta)^2 \frac{\expectation{Z}^2}{\expectation{Z^2}}.
        \label{eqn:paley_zygmund_inequality}
    \end{equation}

    We will use \eqref{eqn:paley_zygmund_inequality} with the random variable $F_{\Gamma(e)}$.
    The idea is that if $D$ is much larger than $L$, the average number of paths passing through $e$ is also large.
    We are using dropout, so the filter variable corresponding to an edge $e=(u,v)$ will depend on only the vertex $u$, that is, $F_e = F_u$.
    For counting paths we also need to take into account that the filter $F_v$ will occurring in all paths passing through $e$.
    Since only the two vertices $u$ and $v$ of $e$ are fixed we can compute
    \begin{equation}
        \expectation{F_{\Gamma(e)}} = \sum_{\gamma \in \Gamma(e)} \expectation{F_{\gamma}} = p^{L}|\Gamma(e)| = p^{L}D^{L-2}.
        \label{eqn:first_moment}
    \end{equation}
    We define the set of broken paths in $\Gamma(e)$ as
    \begin{equation}
        \Gamma_b(e) = \{ \gamma = (u_{i_1}, \ldots, u_{i_k}) \in G^{k} : \exists \eta, \delta \in \Gamma(e), \gamma = \eta \cap \delta \},
    \end{equation}
    that is, $\gamma \in \Gamma_b(e)$ if and only if there exist $\eta, \delta \in \Gamma(e)$ such that $\gamma = \eta \cap \delta$.
    In particular, $\Gamma_b(e)$ contains paths and unions of vertices of paths that pass through $e$.
    Then we have:
    \begin{align}
        \expectation{F_{\Gamma, e}^2} & = \sum_{\gamma \in \Gamma(e)} \sum_{\delta \in \Gamma(e)} \expectation{F_{\gamma} F_{\delta}} \eqcom{i} = \sum_{\gamma \in \Gamma(e)} \sum_{l=2}^{L}\sum_{\substack{\delta \in \Gamma(e) \\ |\gamma \cap \delta| = l}} \probability{F_{\gamma} = 1, F_{\delta} = 1}\\
                                      & \eqcom{ii}= \sum_{\gamma \in \Gamma(e)} \sum_{l=2}^{L}\sum_{\substack{\delta \in \Gamma(e)                                                                                               \\ |\gamma \cap \delta| = l}} p^{l} p^{2L - 2l} \eqcom{iii}= \sum_{l=2}^{L} \sum_{\substack{\eta \in \Gamma_b(e)\\ |\eta| = l}} \sum_{\substack{\gamma, \delta \in \Gamma(e)\\ \eta \subseteq \delta, \gamma\\ \gamma \cap \delta = \eta}} p^{l} p^{2L - 2l} \\
                                      & \eqcom{iv}= \sum_{l=2}^{L} \sum_{\substack{\eta \in \Gamma_b(e)                                                                                                                          \\ |\eta| = l}} (D(D-1))^{L-l} p^{l} p^{2L - 2l} \label{eqn:recover_path_result}\\
                                      & \eqcom{v}= \sum_{l=2}^{L} \binom{L-2}{l-2} D^{l-2} (D(D-1))^{L-l} p^{l} p^{2L - 2l}                                                                                                      \\
                                      & = p^{2L-2} D^{2L-4} + O(Lp^{2L-3}D^{2L - 5}),
        \label{eqn:second_moment}
    \end{align}
    where (i) we have first used that $F_{\gamma}$ are indicators for occurring $\gamma \in \Gamma(e)$ and that at least $l \geq 2$ since vertices $u$ and $v$ are shared among all paths in $\Gamma(e)$; secondly, that we have separated the sum over paths into a path $\gamma$ and all other paths $\delta$ that coincide in $l$ vertices.
    In (ii) we have computed the probability by noting that for $\gamma$ and $\delta$ such that $|\gamma \cap \delta| = l \geq 2$, $\expectation{F_{\gamma}F_{\delta}} = p^{l} p^{2L - 2l}$, where the term $p^l$ accounts for the $l$ shared filters corresponding to $l$ shared vertices and $p^{2L - 2l}$ for the remaining products of filters.
    Note that we have used the independence assumption for filters here.
    (iii) We have used here that $\eta = \delta \cap \gamma \in \Gamma_b(e)$, so that we can separate the previous sum into first, fixing the $l$ vertices where two paths intersect---including $e$---with $\eta \in \Gamma_b(e)$ such that $|\eta|=l$, and then looking for all possible $\delta, \gamma \in \Gamma(e)$ such that $\gamma \cap \delta  = \eta$.
    For (iv) we fix $l$ vertices where $\gamma$ and $\delta$ coincide, then there are still $(D(D-1))^{L-l}$ possible ordered vertex pairs to choose from all the other vertices where $\gamma$ and $\delta$ do not coincide.
    (v) For the remaining sum, for each $l$ fixed locations---including the vertices of $e$, which are fixed---we can still choose $D^{l-2}$ remaining possible vertices.
    Additionally, there are for each $l$, $\binom{L-2}{l-2}$ distinct $l-2$ locations for these vertices.
    Hence, plugging \eqref{eqn:second_moment} and \eqref{eqn:first_moment} into \eqref{eqn:paley_zygmund_inequality} yields
    \begin{align}
        \probability{F_{\Gamma(e)} > \theta p^{L}D^{L-2}} & \geq (1-\theta)^2 \frac{p^{2L}D^{2L-4}}{p^{2L-2} D^{2L-4} + O(Lp^{2L-3}D^{2L - 5}))} \\
                                                          & = (1-\theta)^2 \frac{p^2}{1 + O(L/(Dp))}                                             \\
                                                          & = (1-\theta)^2(p^2 + O(pL/D)).
    \end{align}

    In particular, setting $\theta^{-1} = 2p^{L}D^{L-2}$ and computing the higher order noting that $L > 2$, we obtain that
    \begin{equation}
        \probability{F_{\Gamma(e)} > 1/2} \geq p^2 + O(pL/D),
    \end{equation}
    or alternatively noting that $\{F_{\Gamma(e)} \leq 1/2\} = \{F_{\Gamma(e)} = 0\}$, since $F_{\Gamma(e)} \in \N$ we obtain
    \begin{equation}
        \probability{F_{\Gamma(e)} = 0} \leq 1- p^2 + O(pL/D).
    \end{equation}
    Finally note that $1-p^2 \leq \probability{F_{\Gamma(e)} = 0}$ since the edge $e$ can be present in a path only if the filters at both vertices of $e$ have value $1$, which occurs with probability $p^2$, so that $\probability{F_{\Gamma(e)} > 0} < p^2$.
\end{proof}

Note that in the proof of Lemma~\ref{lem:probability_zero_paths} we can recover the scaling $p^L$ that we have seen in Proposition~\ref{prop:convergence_of_dropout_on_an_expanding_tree} by setting $D=1$ in \eqref{eqn:second_moment} and in \eqref{eqn:recover_path_result}.

From Lemma~\ref{lem:probability_zero_paths} we expect that for a wide network with $L$ layers where $D \gg L$ and an edge $e \in \mathcal{E}(G)$, we have that
\begin{equation}
    \probability{\itr{w}{t}_e \text{ is updated}} = p^2 + O(pL/D).
    \label{eqn:expected_update_rate_wide_network}
\end{equation}

If the convergence rate is related to the update rule, then we would expect that for a wide network the rate would be independent of $L$ which is different from the path network considered in \refProposition {prop:convergence_of_dropout_on_an_expanding_tree}.
In the next section we will verify this intuition on real datasets.
Note, however, that we do not expect to see the dependence on $p$ as shown in \eqref{eqn:expected_update_rate_wide_network}: this heuristic argument provides only the rate at which a weight is updated, and stochastic averaging is not solely driving the convergence rate.
In particular, from an example for wide shallow linear networks in \cite{senencerda2020asymptotic}, close to a critical point of a dropout \gls{ODE}, the dependence scales with a factor $p(1-p)$ instead of $p$.
This is due to the fact that for larger $p$, there are regions of the landscape close to minima that become flat, as also hinted by Proposition~\ref{prop:convergence_rate_generic_dropout}.
Indeed, when $p \uparrow 1$ the term $(1-p)J \downarrow 0$ in the convergence rate of Proposition~\ref{prop:convergence_rate_generic_dropout} lowers the complexity of finding an $\epsilon$-stationary point.
Hence, there are landscape regimes and initialization issues that also account for the convergence rate in \glspl{NN}.

\subsection{Numerical Experiments}
\label{sec:numerical_experiments}

In this section we conduct the dropout stochastic gradient descent algorithm numerically,\footnote{The source code of our implementation is available at \url{https://gitlab.tue.nl/20194488/almost-sure-convegence-of-dropout-algorithms-for-neural-networks}.
} for different datasets and network architectures.
We measure the convergence rate for different widths $D$, depths $L$, and dropout probabilities $1-p$.
We then compare these measurements to the bounds on the convergence rates obtained in \Cref{sec:Results__Convergence_rate_of_GD_on_D_for_arborescences}.
We use \emph{Tensorflow}\footnote{\url{https://www.tensorflow.org/}} for the implementation.

\subsubsection{Setup}

\noindent
\emph{Datasets.}
We will consider three commonly used data sets of images: the \acrshort{MNIST}\footnote{\gls{MNIST}} \citep{lecun2010mnist}, \acrshort{CIFAR}-100-fine\footnote{ \gls{CIFAR}}, and \acrshort{CIFAR}-100-coarse datasets \citep{krizhevsky2009learning}.

\noindent
\emph{\gls{NN}
    Architecture.
}
We use as a base architecture a LeNet with 11 layers where the two dense layers have been substituted with $L$ fully-connected \gls{ReLU} layers of width $D$.
Each of these layers have dropout with dropout probability $1-p$.
While larger networks are commonly used in practice, a LeNet architecture is sufficient to test the effect of dropout on the convergence rate as we verify with the simulations.

\noindent
\emph{Loss.}
We use the cross-entropy loss, which is commonly used for classification.
For two distributions $p$ and $q$ with support on $[n]$ labels, the cross-entropy loss is defined as
\begin{equation}
    l(p,q) = -\sum_{i=1}^{n} q_i \log(p_i).
    \label{eqn:cross_entropy_loss}
\end{equation}

\noindent
\emph{Stopping criteria.}
In all experiments, we stop after $40$ epochs.

\noindent
\emph{Initialization.}
In order to see the convergence rate close to a minimum.
We use first a \emph{Gaussian initialization}, that is, we set every weight on the dense layers to $W_{ijk} \sim \mathrm{Normal}(0,1/\sqrt{D})$ in an independent manner, where $D$ is the width of the layer.
While this initialization is standard, we note that we cannot expect to compare convergence rates for different numbers of layers $L \in\{1,2,3\}$ and for different dropout probabilities $1-p$, since the loss functions are also different.
In the course of our experiments, we found that there are also many saddle points where \gls{SGD} remains stuck, which complicated the estimation of the convergence rate.
In order to start approximately at the same neighborhood where the iterates stay and continuously track minima across different choices of $p$, for each $L \in \{1,2,3\}$ we have used a two-step approach in order to avoid areas of the landscape with saddle points.
We first run \textrm{ADAM}\footnote{Adaptative Moment Estimation (See \cite{kingma2014Adam}).
} for $2$ epochs with $p =0.1$ and store the weights.
Secondly, for each $p
    \in P$ we then perform dropout \gls{SGD} with initialization given by the stored weights.
In this manner, we expect that we are approximately ``tracking'' the same local region across the optimization landscape when we change $p$.
Optimization with \textrm{ADAM} is less prone to remain in flat areas of the landscape since it uses a dynamic step size.
Hence, if after the dynamic step the iterates remain in a part of the landscape with no saddle points that smoothly changes with $p$, we also expect in this case to obtain comparable convergence rates for \gls{SGD} for each fixed $L$.

\noindent
\emph{Step size and batch size.}
In each experiment, the step size is given by $\eta = 10^{-5}$ and the batch size is $b = 1024$.

\noindent
\emph{Fitting procedure.}
We fix a set of probabilities $P \subset [0,1]$ and depths $L =\{1,2,3\}$ and for each pair $(p,l) \in P \times L$ we run the algorithm above.
From the value of the loss from all $T$ iterations of \gls{SGD} $\mathcal{L} = (l_t)_{t=0}^{T}$ in one run, we compute a moving average $a(\mathcal{L})_{t=0}^{T}$, where we average the loss across a window with size given by the number of batches $n_b$ required to complete one epoch.
In this manner we obtain an average convergence rate and diminish the stochasticity from the dataset.
We then fit the averaged loss of the iterates $a(\mathcal{L})_{t=0}^{T}$ for each $p$ and $l$ to the function
\begin{equation}
    f(\alpha_{p,l}, \beta_{p,l}, \gamma_{p,l}) = \alpha_{p,l} \exp(-\beta_{p,l} t) + \gamma_{p,l}.
\end{equation}
We run the experiment $R=10$ times for each $(p,l)$ and obtain an average convergence exponent $(\tilde{\beta}_{p,l})_{(p,l) \in P \times L}$.

\noindent

\begin{figure}[h]
    \captionsetup[subfigure]{justification=centering}
    \centering
    \begin{subfigure}{0.32\textwidth}
        \centering
        \includegraphics[height=0.85\linewidth]{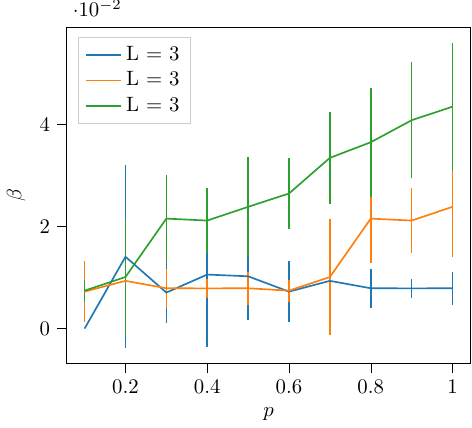}
        \caption*{(a)}
    \end{subfigure}
    \begin{subfigure}{0.32\textwidth}
        \centering
        \includegraphics[height=0.85\linewidth]{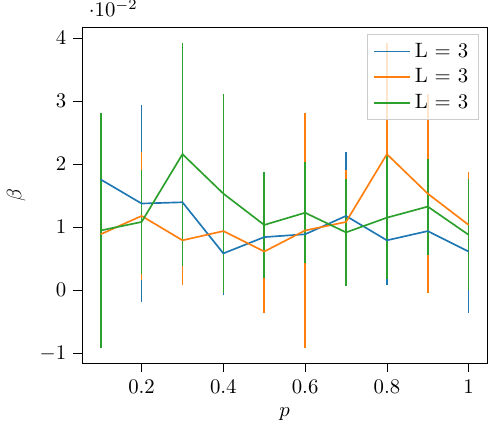}
        \caption*{(b)}
    \end{subfigure}
    \begin{subfigure}{0.32\textwidth}
        \centering
        \includegraphics[height=0.85\linewidth]{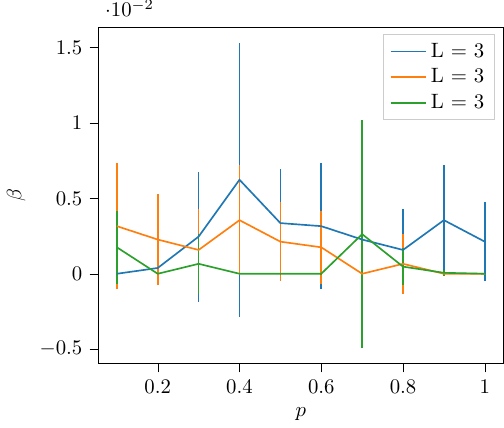}
        \caption*{(c)}
    \end{subfigure}

    \begin{subfigure}{0.32\textwidth}
        \centering
        \includegraphics[height=0.85\linewidth]{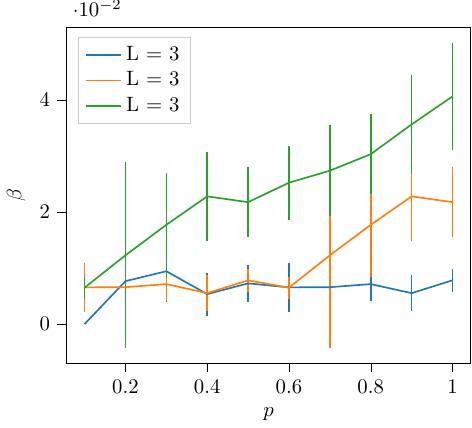}
        \caption*{($a^{\prime}$)}
    \end{subfigure}
    \begin{subfigure}{0.32\textwidth}
        \centering
        \includegraphics[height=0.85\linewidth]{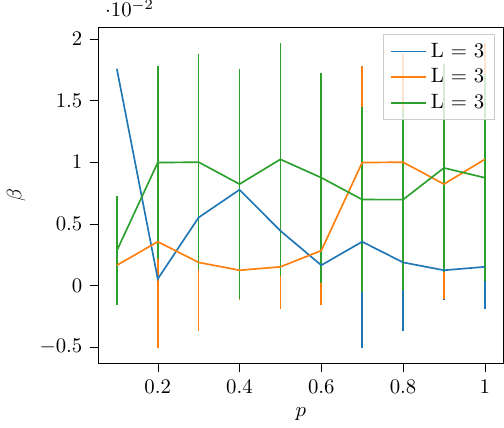}
        \caption*{($b^{\prime}$)}
    \end{subfigure}
    \begin{subfigure}{0.32\textwidth}
        \centering
        \includegraphics[height=0.85\linewidth]{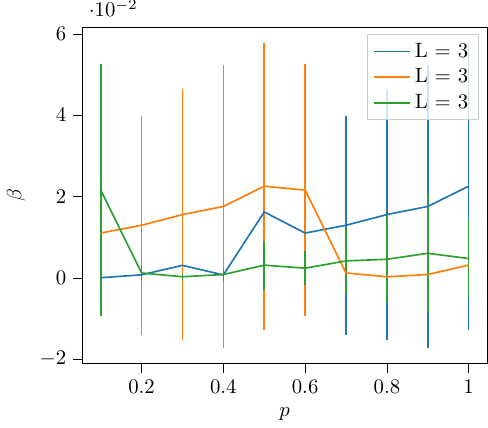}
        \caption*{($c^{\prime}$)}
    \end{subfigure}
    \vspace{-0.5em}
    \caption{The fit $\tilde{\beta}_{p,l}$ for $p \in \{i \times 10^{-1}: i \in [10]\}$ and $l \in \{1,2,3\}$ for LeNet with different widths $D$ and different datasets.
        Here \emph{(a)} \gls{MNIST} with $D=50$; \emph{($a^{\prime}$)} \gls{MNIST} with $D=100$; \emph{(b)} \gls{CIFAR}-100-fine labels with $D=50$; \emph{($b^{\prime}$)} \gls{CIFAR}-100-fine labels with $D=100$; \emph{(c)} \gls{CIFAR}-100-coarse labels with $D=50$; ($c^{\prime}$) \gls{CIFAR}-100-coarse labels with $D=100$.
        While for the \gls{MNIST} dataset there seems to be an increasing dependence of dropout on the convergence rate with the depth $L$, for \gls{CIFAR} no such dependence is observed.
        We remark, however, that in the \gls{CIFAR} datasets encountering saddle points was more common.
        For those areas the loss profile is flat and so we expect the fits to be biased towards the origin in some cases.
    }	\label{fig:deep_NN}
\end{figure}

\subsubsection{Results}

In \Cref{fig:deep_NN} we can see the plots of $\tilde{\beta}_{p,l}$.
As suspected from the heuristic argument, we do not see an increasingly large dependence on $p$ for $L=1,2$ or $3$ when $D \in \{50,100\}$.
For the \gls{MNIST} dataset some dependence on the depth is appreciated, but this may be due to other factors that affect the convergence rate, like initialization issues.
For the \gls{CIFAR} datasets, convergence is greatly affected by saddlepoints despite the use of dropout.
This is, however, common when using \gls{SGD} with small constant stepsizes.
In particular, in practical scenarios other schemes that adjust the stepsize, like e.g.\ \textrm{ADAM}, may be more appropriate when dealing with deep networks with dropout in different layers.
From the experiments it is concluded that despite the stochasticity provided by dropout, the convergence rate is not affected much by a varying dropout probability $1-p$ in wide networks with just few dropout layers.
 \section{Conclusion}
\label{sec:Conclusion}

In this paper we have shown with a probability theoretical proof that a large class of dropout algorithms for neural networks converge almost surely to a unique stationary set of a projected system of \glspl{ODE}.
The result gives a formal guarantee that these dropout algorithms are well-behaved for a wide range of \glspl{NN} and activation functions, and will at least asymptotically not suffer from issues because of the connection to bond percolation.
We leave the extension of this result for nonsmooth activation functions such as \textrm{ReLU} for future work.
Additionally, we established bounds for the sample complexity of \gls{SGD} with dropout to converge to an $\epsilon$-stationary point of a generic nonconvex function.
An upper bound to the rate of convergence of \gls{GD} on the limiting \gls{ODE} of dropout algorithms was established as well for arborescences of arbitrary depth with linear activation functions.
While \gls{GD} on the limiting \gls{ODE} is not strictly a dropout algorithm, the result is a necessary step towards analyzing the convergence rate of the actual stochastic implementations of dropout algorithms.
Finally, Proposition~\ref{prop:estimate_nu_dependent_p} specifically implies that \emph{Dropout} and \emph{Dropconnect} can impair the convergence rate by as much as an exponential factor in the number of layers of thin but deep networks.
We have theoretically and experimentally verified this claim in experiments with a path network.
This fact is in contrast to wide networks with a few dropout layers where a strong dependence on the dropout probability $p$ is not experimentally observed.
These two observations together imply that there is a change of regime in the convergence rate from networks that are wide with a few dropout layers to thin networks with many dropout layers.
 
\acks{We thank the anonymous referees for their feedback.
  Their suggestions have led to an improved paper.
}

\bibliography{biblio}

\appendix

\newpage

\begin{center}
  \Large{\bf{Appendix}}
\end{center}

\section{Backpropagation Algorithm}
\label{sec:Appendix_Backpropagation}

We define the backpropagation algorithm used in \Cref{sec:Preliminaries} to compute the estimate of the gradient.

\begin{definition}
    \label{definition:Backpropagation}

    Assume $\sigma \in C^1(\realNumbers)$.
    Given weights $W \in \mathcal{W}$ and input--output pair $(x,y) \in \realNumbers^{d_0} \times \realNumbers^{d_L}$, the tensor
    $
        \mathrm{B}_W(x,y) \in \realNumbers^{d_{L} \times d_{L-1}} \times \cdots \times \realNumbers^{d_1 \times d_0}
    $
    is calculated iteratively by:
    \begin{enumerate}[topsep=0pt,itemsep=-1ex,partopsep=1ex,parsep=1ex]
        \item Computing $A_1, \ldots, A_L$ using Definition~\ref{def:NN__Feedforward_recursion}.
        \item Calculating
for $i = L-1, \ldots, 1$,
              \begin{align}
                  R_{L}
                   &
                  = A_L =
                  (y - W_L A_{L-1})
                  \in \R^{d_L},
                  \quad
                  \nonumber \\
                  R_{i}
                   &
                  = (W_{i+1}^{\mathrm{T}} R_{i+1}) \odot (\sigma^{\prime}( W_i A_{i-1}))
                  \in \R^{d_{i}}.
                  \label{eqn:Backpropagation_variables}
              \end{align}
        \item Setting for $i \in [L]$,
              $
                  \bigl( \mathrm{B}_W(x,y) \bigr)_i
                  = -2R_{i}
                  A_{i-1}^{\mathrm{T}}.
              $
\end{enumerate}
\end{definition}

\noindent
Definition~\ref{definition:Backpropagation} is essentially a computationally efficient manner of calculating the gradient $\nabla  l(\Psi_W(x),y)$ in \eqref{eqn:Oracle_risk_function}, leveraging the \gls{NN}'s layered structure together with the chain rule of differentation to come to a recursive computation of the partial derivatives.

\section{ODE method}
\label{sec:appendix_ode_method}
Regarding our second result in Proposition~\ref{prop:convergence_of_dropout_on_an_expanding_tree}, observe that \gls{GD} on a limiting \gls{ODE} is not exactly a dropout algorithm.
Analyzing \gls{GD}'s convergence rate however is an important stepping stone towards analyzing the convergence rate of dropout algorithms.
To see the mathematical relation, consider that any dropout algorithm updates the weights
\begin{equation}
    \itr{W}{n+1}
    = \itr{W}{n} + \itrd{\alpha}{n} \itr{\Delta}{n+1}
    \label{eqn:Dropout_algorithm__Unprojected}
\end{equation}
randomly for $n = 0, 1, 2, \cdots$.
Here, the $\itrd{\alpha}{n}$ denote the step sizes of the algorithm, and the $\itr{\Delta}{n+1}$ represent the random directions that result from the act of dropping weights.
As we will show in this paper under assumptions of independence, these random directions satisfy
\begin{align}
    \expectation{ \itr{\Delta}{n+1} \mid \itr{W}{0}, \ldots, \itr{W}{n} }
     &
    = - \nabla \mathcal{D}( \itr{W}{n} )
    \label{eqn:dropout_regular_intro}
\end{align}
for some continuous, differentiable function $\mathcal{D}(W)$.
Observe that the algorithm in \eqref{eqn:Dropout_algorithm__Unprojected} satisfies
$
    \itr{W}{n+1}
    =
    \itr{W}{n}
    +
    \itrd{\alpha}{n}
    (
    - \nabla \mathcal{D}( \itr{W}{n} ) + \itr{M}{n+1}
    )
$
where
$
    \itr{M}{n+1}
    =
    \expectation{ \itr{\Delta}{n+1} \mid \itr{W}{0}, \ldots, \itr{W}{n} } - \itr{\Delta}{n+1}
$
describes a \emph{martingale difference} sequence.
This martingale difference sequence's expectation with respect to the past $\itr{W}{0}, \ldots, \itr{W}{n}$ is zero.

For diminishing step sizes $\itrd{\alpha}{n}$, we can consequently view dropout algorithms as in \eqref{eqn:Dropout_algorithm__Unprojected} as being noisy discretizations of the ordinary differential equation
\begin{equation}
    \frac{\d{W}}{\d{t}}
    = - \nabla \mathcal{D}( W(t) ).
    \label{eqn:Gradient_flow_introduction}
\end{equation}
In fact, we employ the so-called \emph{ordinary differential equation method} \citep{kushner2003stochastic,borkar2009stochastic}, which formally establishes that the random iterates in \eqref{eqn:Dropout_algorithm__Unprojected} follow the trajectories of the gradient flow in \eqref{eqn:Gradient_flow_introduction}.
Hence, after sufficiently many iterations $n$ and for a sufficiently small step size $\alpha$, the convergence rate of the deterministic \gls{GD} algorithm
\begin{equation}
    \itrd{W}{n+1}
    = \itrd{W}{n} - \alpha \nabla \mathcal{D}(\itrd{W}{n})
\end{equation}
gives insight into the convergence rate of the stochastic dropout algorithm in \eqref{eqn:Dropout_algorithm__Unprojected}.

\section{Projection operator}
\label{sec:Projection_operator}

We define here the projection operator $\pi$ used in \Cref{sec:Results__Almost_sure_convergence_of_projected_dropout_algorithms}.
Say that $\mathcal{H}$ is defined by $l$ smooth constraints \albert{$q_{i}: \mathcal{W} \to \R$, $i=1,\ldots, l$ satisfying $q_1(W) \leq 0, \ldots, q_l(W) \leq 0$, i.e.,} $\mathcal{H} = \lbrace W \in \mathcal{W} \albert{~ : ~ } q_i(W) \leq 0 \ \forall i \in [l] \rbrace$.
Denote by $\nabla \mathcal{D}|_{\mathcal{H}}(W)$ the gradient of $\mathcal{D}(W)$ restricted to $\mathcal{H}$ and let $\rm{T}_{W} \mathcal{W}$ be the tangent space of $\mathcal{W}$ at $W$.
Suppose that $\nabla q_i(W) \neq 0$ whenever $q_i(W) = 0$, and that these are linearly independent.
At any point $W \in \partial \mathcal{H}$, we define the outer normal cone
\begin{align}
    C(W)
    \triangleq \{ v \in \rm{T}_W \mathcal{W} \ \albert{~ : ~ } \  & \nabla q_i(W) v^T \geq 0
    \text{ for } i \in [l] \text{ s.t.
    }  q_i(W) = 0 \}
    .
    \label{eqn:cone_projection_manifold}
\end{align}
We also assume that $C(W)$ is upper semicontinuous, i.e., if $\tilde{W} \in B_{\mathcal{H}}(W, \delta)$, where $B_{\mathcal{H}}(W, \delta)$ is the ball of radius $\delta > 0$ centered at $W$ and intersected with $\mathcal{H}$, then $C(W)= \cap_{\delta > 0} \allowbreak \bigl( \cup_{\tilde{W} \in B_{\mathcal{H}}(W, \delta)} C( \tilde{W}) \bigr)$.
Let $\pi(W) \triangleq - t \indicator{ W \in \partial \mathcal{H}}$ with $t \in C(W)$ minimal to resolve the violated constraints of $\mathcal{D}|_{\mathcal{H}}(W)$ at $W \in \partial \mathcal{H}$ so that $\mathcal{D}|_{\mathcal{H}}(W) + \pi(W)$ points inside $\mathcal{H}$.
In particular, we have
\begin{equation}
    \pi(W)
    = -\sum_{i=1}^l \lambda_i(W) \nabla q_i(W) \in -C(W)
\end{equation}
where $\lbrace \lambda_i(W) \geq 0 \rbrace_{i=1}^l$ are functions such that $\lambda_i(W) = 0$ if $q_i(W) < 0$.

\section{Proof of \texorpdfstring{Proposition~\ref{prop:dropout_converges}}{of convergence}}
\label{appendix:Proof_that_dropout_converges}

The proof of Proposition~\ref{prop:dropout_converges} relies on the framework of stochastic approximation in \cite{kushner2003stochastic}.
Specifically, Proposition~\ref{prop:dropout_converges} follows from Theorem~2.1 on p.~127 if we can show that its conditions (A2.1)--(A2.6) on p.~126 are satisfied.
In the notation of Sections~\ref{sec:Preliminaries}, \ref{sec:Results__Almost_sure_convergence_of_projected_dropout_algorithms}, these conditions read:
\begin{itemize}
    \item[\textrm{(A2.1)}] $\sup_{t} \expectation{ \pnorm{\itr{\Delta}{t+1}}{\mathrm{F}} } < \infty$;
    \item[\textrm{(A2.2)}] there is a measurable function $\bar{g}(\cdot)$ of $W$ and there are random variables $\itr{\beta}{t+1}$ such that
        \begin{equation}
            \expectation{ \itr{\Delta}{t+1} \mid \mathcal{F}_t }
            =
            \bar{g}(\itr{W}{t}) + \itr{\beta}{t+1},
        \end{equation}
        where $\mathcal{F}_t$ denotes the smallest $\sigma$-algebra generated by $\cup_{s \leq t} \{ \itr{W}{0}, ( \itr{F}{s}, \itr{X}{s}, \itr{Y}{s} ) \}$;
    \item[\textrm{(A2.3)}] $\bar{g}(\cdot)$ is continuous;
    \item[\textrm{(A2.4)}] the step sizes satisfy
        \begin{gather}
            \label{eqn:Step_sizes_diverge_but_not_too_fast_appendix}
            \sum_{t=1}^\infty \itrd{\alpha}{t} = \infty,
            \itrd{\alpha}{n} \geq 0, \itrd{\alpha}{n} \to 0
            \textnormal{ for }
            n \geq 0
            \textnormal{ and }
            \itrd{\alpha}{n} = 0
            \textnormal{ for }
            n < 0
            ;
            \\
            \quad
            \sum_{t=1}^\infty ( \itrd{\alpha}{t} )^2 < \infty
            ;
        \end{gather}
    \item[\textrm{(A2.5)}] $\sum_{t} \itrd{\alpha}{t} \pnorm{ \itr{\beta}{t}}{\mathrm{F}} < \infty$ w.p.
        \ one;
    \item[\textrm{(A2.6)}] $\bar{g}(\cdot) = - \nabla \mathcal{D}(\cdot)$ for a continuously differentiable real-valued $\mathcal{D}(\cdot)$ and $\mathcal{D}(\cdot)$ is constant on each stationary set $S_i$.
\end{itemize}

We next also state for your convenience Theorem 2.1 by \cite{kushner2003stochastic} in the notation of this paper.
Their result does require some notation, as it characterizes the limiting behavior of the iterates of
\begin{equation}
    \itr{W}{n+1}
    =
    \mathcal{P}_{\mathcal{H}}\bigl( \itr{W}{n} - \alpha \itr{\Delta}{n+1} \bigr)
    \triangleq
    \itr{W}{n} - \alpha \itr{\Delta}{n+1} + \itr{Z}{n+1}
    .
    \label{eqn:Dropout_algorithm__Projected}
\end{equation}
For any sequence of step sizes $\itrd{\alpha}{n}$ satisfying
(A2.4),
define $t_0 = 0$ and $t_n = \sum_{i=0}^{n-1} \itrd{\alpha}{i}$.
Define the continuous-time interpolation
\begin{equation}
    W_0(t)
    =
    \begin{cases}
        \itr{W}{n} & \textnormal{for} \quad t_n \leq t < t_{n+1}, \\
        \itr{W}{0} & \textnormal{for} \quad t \leq 0,             \\
    \end{cases}
\end{equation}
as well as for $m \in \naturalNumbersZero$, the shifted processes $W_m(t) = W_0( t_m + t )$ for $t \in (-\infty,\infty)$.
Let furthermore
$
    o(t)
    =
    \inf \{ n \in \naturalNumbersZero : t_n \leq t < t_{n+1} \}
$
for $t \in [0,\infty)$,
and
$
    o(t)
    = 0
$
for $t \in (-\infty,\infty)$,
and define
\begin{equation}
    Z_0(t)
    =
    \begin{cases}
        \sum_{i=0}^{o(t)-1} \itrd{\alpha}{i} Z_i & \textnormal{for} \quad t \in [0,\infty),       \\
        0                                        & \textnormal{for} \quad t \in (-\infty,\infty), \\
    \end{cases}
\end{equation}
as well as for $m \in \naturalNumbersZero$, the shifted processes $Z_m(t) = \sum_{i=m}^{o(t_m+t)-1}$ for $t \in [0,\infty)$ and $Z_m(t) = - \sum_{i=o(t_m+t)}^{m-1} \itrd{\alpha}{i} Z_i$ for $t \in (-\infty,0)$.
The following now holds:

\begin{theorem}[A part of Theorem 2.1 by \cite{kushner2003stochastic}]
    \label{thm:Kuschner_adapted}
    Let conditions \allowbreak(A2.1)--(A2.5) hold for algorithm \eqref{eqn:Dropout_algorithm__Projected}, with the projection onto $\mathcal{H}$ being as described in \refAppendixSection{sec:Projection_operator}.
    Then there is a set $N$ of probability zero such that for $\omega \not\in N$, the set of functions $\{ W_m(\omega,\cdot), \allowbreak Z_m(\omega,\cdot), \allowbreak m < \infty \}$ is equicontinuous.
    Let $( W(\omega, \cdot), Z(\omega, \cdot) )$
    denote the limit of some convergent subsequence.
    Then this pair satisfies the
    projected \gls{ODE} \eqref{eqn:ODE_dropout}, and $\{ \itr{W}{n}(\omega) \}$ converges to some limit set of the \gls{ODE} in $\mathcal{H}$.
    Suppose that (A2.6) holds.
    Then, for almost all $\omega$, $\{ \itr{W}{n}(\omega) \}$ converges to a unique
    $S_i$.
\end{theorem}

In order to apply \refTheorem{thm:Kuschner_adapted} and arrive at Proposition~\ref{prop:dropout_converges}, we verify conditions (A2.1)--(A2.6) through Lemmas~\ref{lem:Boundedness_of_the_variance_of_stochastic_iterands}--\ref{lem:Dw_is_constant_on_Si} shown next in \refAppendixSection{sec:Appendix__Verification_of_conditions_A21_to_A26}.
These lemmas are proven in Appendices~\ref{secappendix:Boundedness_of_the_variance_of_stochastic_iterands}--\ref{secappendix:Dw_is_constant_on_Si}, respectively.

\subsection{Verification of conditions (A2.1)--(A2.6)}
\label{sec:Appendix__Verification_of_conditions_A21_to_A26}

First we assume conditions (N1)--(N3) and we prove that the variance of the random update direction in \eqref{eqn:Random_direction_Delta_as_function_of_Backpropagation} is finite.
This verifies condition (A2.1).
The proof can be found in \refAppendixSection{secappendix:Boundedness_of_the_variance_of_stochastic_iterands}:

\begin{lemma}
    \label{lem:Boundedness_of_the_variance_of_stochastic_iterands}
    Assume (N1)--(N3) from Proposition~\ref{prop:dropout_converges}.
    Then $\sup_{t \in \N} \expectation{ \pnorm{ \itr{ \Delta_i }{t+1} }{\textnormal{F}}^2 } < \infty$ for $i = 0, 1, \ldots, L$.
\end{lemma}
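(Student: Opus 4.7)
The plan is to exploit the projection onto the compact set $\mathcal{H}$ together with the polynomial bounds on $\sigma$ and $\sigma'$ to reduce the Frobenius norm $\pnorm{\itr{\Delta_i}{t+1}}{\mathrm{F}}$ to a polynomial in $(\pnorm{\itr{X}{t+1}}{2}, \pnorm{\itr{Y}{t+1}}{2})$ whose degree in $\pnorm{Y}{2}$ is at most $1$. Then (N2) will yield a uniform-in-$t$ second moment bound. Concretely, because of \eqref{update_psgd} we have $\itr{W}{t} \in \mathcal{H}$ for every $t$, and since $\mathcal{H}$ is compact we can fix a constant $M = \sup_{W \in \mathcal{H}} \max_i \pnorm{W_i}{\mathrm{F}}$. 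The masked weights $\itr{F_i}{t+1}\odot \itr{W_i}{t}$ then also have Frobenius norm bounded by $M$ (since $F$ is $\{0,1\}$-valued), uniformly in $t$ and $\omega$.

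First I would run the forward recursion from \refDefinition{def:NN__Feedforward_recursion} applied to the \emph{masked} weights and show inductively that for each layer $i$ there is a constant $c_i$ and an integer $p_i \in \N$ such that
\begin{equation*}
\pnorm{A_i}{2} \le c_i \bigl(1 + \pnorm{\itr{X}{t+1}}{2}^2\bigr)^{p_i}.
\end{equation*}
The base case $A_0 = \itr{X}{t+1}$ is trivial; for the inductive step use $\pnorm{W_i A_{i-1}}{2} \le M \pnorm{A_{i-1}}{2}$ together with (N1), which by the definition of $C^2_{\mathrm{PB}}(\R)$ gives $|\sigma(s)| \le k_0 (1+s^2)^{k_0}$. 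Applied componentwise this produces a polynomial bound on $\pnorm{A_i}{2}$ of the required form.

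Next I would run the backward recursion \eqref{eqn:Backpropagation_variables} and track the dependence on $Y$ carefully. The key observation is linearity of the recursion in $R$: since $R_L = \itr{Y}{t+1} - W_L A_{L-1}$ and $R_i = (W_{i+1}^{\mathrm{T}} R_{i+1}) \odot \sigma'(W_i A_{i-1})$, by induction each $R_i$ is of the form $R_i = \alpha_i(X) \cdot Y + \beta_i(X)$ (entrywise affine in $Y$), where $\alpha_i$, $\beta_i$ are bounded in norm by polynomials in $(1+\pnorm{X}{2}^2)$ of some finite degree using the forward bound together with $|\sigma'(s)| \le k_1(1+s^2)^{k_1}$ from (N1). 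Consequently, for each $i \in [L]$,
\begin{equation*}
\pnorm{R_i A_{i-1}^{\mathrm{T}}}{\mathrm{F}} \le \bigl(1 + \pnorm{\itr{Y}{t+1}}{2}\bigr) \, Q_i\bigl(\pnorm{\itr{X}{t+1}}{2}\bigr)
\end{equation*}
for some polynomial $Q_i$ with nonnegative coefficients.

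Finally, since $\pnorm{F \odot T}{\mathrm{F}} \le \pnorm{T}{\mathrm{F}}$ for any $\{0,1\}$-valued $F$, the chain $\itr{\Delta_i}{t+1} = \itr{F_i}{t+1}\odot(-2 R_i A_{i-1}^{\mathrm{T}})$ inherits the same affine-in-$Y$ bound. Squaring and expanding, $\pnorm{\itr{\Delta_i}{t+1}}{\mathrm{F}}^2 \le 4\bigl(1 + 2\pnorm{\itr{Y}{t+1}}{2} + \pnorm{\itr{Y}{t+1}}{2}^2\bigr) Q_i(\pnorm{\itr{X}{t+1}}{2})^2$. Taking expectations and noting that by (N3) the pair $(\itr{X}{t+1}, \itr{Y}{t+1})$ is distributed as $(X,Y)$ independently of anything in the polynomial bound, the expectation splits into a finite sum of terms of the form $\expectation{\pnorm{Y}{2}^m \pnorm{X}{2}^n}$ with $m \in \{0,1,2\}$ and $n \in \naturalNumbersZero$, which are finite by (N2). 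The bound is independent of $t$, giving the claimed supremum.

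The main obstacle is step three: one must avoid letting $Y$ appear nonlinearly in $R_i$, which would spoil finiteness of the second moment because (N2) only controls moments of $\pnorm{Y}{2}$ up to order two. Keeping careful track that each $R_i$ remains affine in $Y$ (so $R_i A_{i-1}^{\mathrm{T}}$ contributes at most $\pnorm{Y}{2}^2$ after squaring) is the delicate bookkeeping that makes the estimate work.
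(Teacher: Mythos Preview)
Your proposal is correct and follows essentially the same route as the paper: bound the masked weights via compactness of $\mathcal{H}$, propagate polynomial-in-$\pnorm{X}{2}$ bounds forward through the $A_i$'s using (N1), and observe that the backward variables $R_i$ depend affinely on $Y$ (the paper obtains this by bounding $\pnorm{R_L}{\mathrm{F}} \le \pnorm{Y}{2} + (\text{poly in }\pnorm{X}{2})$ and then propagating multiplicatively, you by tracking the affine structure of $R_i$ in $Y$ directly), so that after squaring only moments $\expectation{\pnorm{Y}{2}^m \pnorm{X}{2}^n}$ with $m\le 2$ appear and (N2) applies.
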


We prove next that if $\sigma \in C^r_{PB}(\R)$, then the random update direction in \eqref{eqn:Random_direction_Delta_as_function_of_Backpropagation}, conditional on all prior updates, has conditional expectation $\nabla \mathcal{D}(\itr{W}{t})$.
Lemma~\ref{lem:Expectation_cond_Ft_of_Delta_i} verifies conditions (A2.2), (A2.3), and (A2.5) (in particular, here $\itr{\beta}{t} = 0$).
The proof can be found in \refAppendixSection{secappendix:Expectation_cond_Ft_of_Delta_i}:

\begin{lemma}
    \label{lem:Expectation_cond_Ft_of_Delta_i}
    Assume (N2)--(N4) from Proposition~\ref{prop:dropout_converges}.
    Then
    $
        \expectation{ \itr{\Delta}{t+1} | \mathcal{F}_t }
        = \nabla \mathcal{D}(\itr{W}{t})
    $.
    Furthermore, $\nabla \mathcal{D}: \mathcal{W} \to \mathcal{W}$ is $r-1$ times continuously differentiable.
\end{lemma}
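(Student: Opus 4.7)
My plan is to unpack the conditional expectation using independence, recognize $\itr{\Delta}{t+1}$ pointwise as the $W$-gradient of the random objective $l(\Psi_{F \odot W}(X), Y)$, and then justify interchanging gradient and expectation via dominated convergence using the polynomial-growth and moment assumptions. For the smoothness claim I iterate the same argument at higher orders.

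Since $(\itr{F}{t+1}, \itr{X}{t+1}, \itr{Y}{t+1})$ is independent of $\mathcal{F}_t$ by (N3) and $\itr{W}{t}$ is $\mathcal{F}_t$-measurable, a standard conditioning argument yields $\expectation{ \itr{\Delta}{t+1} | \mathcal{F}_t } = g(\itr{W}{t})$, where $g(W) \triangleq \expectation{ F \odot \mathrm{FB}_{F \odot W}(X,Y) }$ and $(F,X,Y)$ has the common law of the triples. The first claim therefore reduces to showing $g(W) = \nabla \mathcal{D}(W)$. For fixed $(F, X, Y)$, the chain rule applied to the change of variables $V = F \odot W$ (so that $\partial V_{j,s,m} / \partial W_{i,r,l} = F_{i,r,l}$ when $(j,s,m) = (i,r,l)$ and $0$ otherwise) together with the pointwise backpropagation identity \eqref{eqn:Backpropagation_calculates_the_gradient_operator_pointwise} gives
\begin{equation*}
\frac{\partial l(\Psi_{F \odot W}(X), Y)}{\partial W_{i,r,l}} = F_{i,r,l} \cdot \bigl( \mathrm{FB}_{F \odot W}(X,Y) \bigr)_{i,r,l} = \bigl( F \odot \mathrm{FB}_{F \odot W}(X,Y) \bigr)_{i,r,l}.
\end{equation*}
Hence the integrand of $g(W)$ is exactly the $W$-gradient of $l(\Psi_{F \odot W}(X), Y)$, and swapping gradient and expectation then yields $g(W) = \nabla \mathcal{D}(W)$.

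The interchange is the main technical step. I would apply a Leibniz / dominated-convergence rule: it suffices to exhibit a $\mathbb{P}$-integrable bound, uniform for $W$ in a small neighborhood $U$ of any fixed point, on the partial derivatives $\partial l(\Psi_{F \odot W}(X), Y) / \partial W_{i,r,l}$. Iterating \eqref{def:NN__Feedforward_recursion} and invoking $\sigma \in C^2_{\mathrm{PB}}(\R)$ yields polynomial bounds of the form $\pnorm{A_i}{2} \leq c_i (1 + \pnorm{X}{2})^{n_i}$ uniform in $W \in U$ (absorbing the $\pnorm{W}{\mathrm{F}}$ factors into constants, since $U$ is bounded), and similarly for the backpropagation quantities $R_i$ defined in \eqref{eqn:Backpropagation_variables}, with $R_L$ contributing an additional factor of $(1 + \pnorm{Y}{2})$. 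Combining with $|F_{i,r,l}| \leq 1$ produces a dominating function of the form $c (1 + \pnorm{X}{2})^n (1 + \pnorm{Y}{2})$, which is integrable by (N2). This establishes the first claim.

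For the smoothness claim, the same strategy iterates. Under (N4), $\sigma \in C^r_{\mathrm{PB}}(\R)$, so $W \mapsto l(\Psi_{F \odot W}(X), Y)$ is $r$ times continuously differentiable on $\mathcal{W}$ for each fixed $(F, X, Y)$, and partial derivatives of every order $k \leq r$ admit polynomial-in-$(\pnorm{X}{2}, \pnorm{Y}{2})$ dominations uniform for $W$ in bounded sets, by the same iterative chain-rule argument through \eqref{eqn:Backpropagation_variables}. Applying the Leibniz rule $r$ times then gives $\mathcal{D} \in C^r(\mathcal{W})$, and hence $\nabla \mathcal{D}$ is $(r-1)$ times continuously differentiable. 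The main obstacle I anticipate is the bookkeeping: producing clean polynomial-in-$(\pnorm{X}{2}, \pnorm{Y}{2})$ bounds for arbitrary-order partial derivatives of $\Psi_W$ through the recursive structure of backpropagation, since derivatives $\sigma^{(l)}$ at deeper layers combine with Jacobians of lower-layer weights in a combinatorially nontrivial way. This is ultimately routine once one sets up the correct multi-index recursion, and (N2) supplies sufficient polynomial moments of $\pnorm{X}{2}$ to absorb the resulting bounds.
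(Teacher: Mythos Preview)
Your proposal is correct and follows essentially the same route as the paper's own proof: use measurability of $\itr{W}{t}$ and independence of $(\itr{F}{t+1},\itr{X}{t+1},\itr{Y}{t+1})$ from $\mathcal{F}_t$ to reduce to a deterministic expectation, identify the integrand via the chain rule and \eqref{eqn:Backpropagation_calculates_the_gradient_operator_pointwise} as $\partial_{W_{i,r,l}} l(\Psi_{F\odot W}(X),Y)$, and then justify the interchange of derivative and expectation by dominated convergence using the polynomial bounds from $\sigma \in C^r_{\mathrm{PB}}$ together with (N2). The paper simply reuses the explicit polynomial-in-$(\pnorm{X}{2},\pnorm{Y}{2})$ bound \eqref{eqn:Intermediate_bound_on_Delta_irlt__Nr1} already derived in \refLemma{lem:Boundedness_of_the_variance_of_stochastic_iterands} as its dominating function, whereas you sketch the same bound inline; otherwise the arguments coincide.
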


From these conditions the first part of Proposition~\ref{prop:dropout_converges} follows.
To prove the second part of Proposition~\ref{prop:dropout_converges}, we have to prove that the set of stationary points $S_\mathcal{H}$ is well-behaved in the sense that $\mathcal{D}|_{S_i}(W)$ is constant.
If an objective function is sufficiently differentiable, this is guaranteed by the Morse--Sard Theorem \citep{morse1939behavior,sard1942measure}.
In the present case however we must take into account the possibility of an intersection of the set of stationary points with the boundary $\partial \mathcal{H}$.
Assuming (N4) and (N5) provides sufficient conditions.
The proof of Lemma~\ref{lem:Dw_is_constant_on_Si} can be found in \refAppendixSection{secappendix:Dw_is_constant_on_Si}:

\begin{lemma}
    \label{lem:Dw_is_constant_on_Si}
    If (N2)--(N5) hold, then $\mathcal{D}(W)$ is constant on each $S_i$.
\end{lemma}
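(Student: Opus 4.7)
The plan is to apply the Morse--Sard theorem to $\mathcal{D}$ and exploit the connectedness of each $S_i$. By \refLemma{lem:Expectation_cond_Ft_of_Delta_i} and (N4), $\nabla \mathcal{D}$ is $(r-1)$-times continuously differentiable, so $\mathcal{D} \in C^r(\mathcal{W})$ with $r \geq \dim(\mathcal{W})$. The Morse--Sard theorem then guarantees that the set of critical values $\mathcal{D}(\{W \in \mathcal{W} : \nabla \mathcal{D}(W) = 0\})$ has Lebesgue measure zero in $\R$.

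First I would use (N5) to identify $S_\mathcal{H}$ with a subset of the classical critical set of $\mathcal{D}$. The contrapositive of (N5) reads: if $\nabla \mathcal{D}|_\mathcal{H}(W) + \pi(W) = 0$, then $\nabla \mathcal{D}|_\mathcal{H}(W) = 0$. Interpreting $\nabla \mathcal{D}|_\mathcal{H}$ as the ambient gradient $\nabla \mathcal{D}$ restricted to the domain $\mathcal{H}$, this yields $S_\mathcal{H} \subseteq \{W \in \mathcal{H} : \nabla \mathcal{D}(W) = 0\}$. This is precisely the role of (N5): it rules out the artificial boundary stationary points of the projected flow where a nonzero $\nabla \mathcal{D}(W) \in -C(W)$ is cancelled only by the reflection term $\pi(W)$, reducing $S_\mathcal{H}$ to honest critical points of $\mathcal{D}$.

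Next, each $S_i$ is a connected compact component of $S_\mathcal{H}$, so $\mathcal{D}(S_i)$ is a connected compact subset of $\R$, i.e., a closed interval $[a_i, b_i]$. By the previous step, $\mathcal{D}(S_i)$ is contained in the set of critical values of $\mathcal{D}$, and is therefore Lebesgue-null by Morse--Sard. A closed interval of measure zero must be a singleton, so $a_i = b_i$ and $\mathcal{D}$ is constant on $S_i$.

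I expect the main obstacle to be the boundary behaviour: making rigorous that (N5) genuinely rules out all artificial stationary points of the projected ODE on $\partial \mathcal{H}$, rather than only some of them. An alternative route that is robust to the interpretation of $\nabla \mathcal{D}|_\mathcal{H}$ is to stratify $\mathcal{H} = \bigsqcup_{I \subseteq [l]} F_I^\circ$ by relatively open faces $F_I^\circ = \{W \in \mathcal{H} : q_i(W) = 0 \text{ for } i \in I,\ q_j(W) < 0 \text{ for } j \notin I\}$, each a smooth submanifold of dimension $n_I \leq \dim \mathcal{W} \leq r$. The KKT characterisation of $S_\mathcal{H} \cap F_I^\circ$ then forces the tangential component of $\nabla \mathcal{D}$ along $F_I^\circ$ to vanish, so $S_\mathcal{H} \cap F_I^\circ$ is contained in the critical set of $\mathcal{D}|_{F_I^\circ} \in C^r$. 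Applying Morse--Sard face by face and taking the finite union yields a null set of critical values, after which connectedness of $S_i$ once again forces $\mathcal{D}(S_i)$ to reduce to a single point.
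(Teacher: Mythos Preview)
Your proposal is correct and follows essentially the same route as the paper: use \refLemma{lem:Expectation_cond_Ft_of_Delta_i} with (N4) to get $\mathcal{D}\in C^r(\mathcal{W})$ with $r\geq\dim(\mathcal{W})$, invoke (N5) in contrapositive form to reduce $S_\mathcal{H}$ to genuine critical points of $\mathcal{D}$, apply Morse--Sard to obtain that $\mathcal{D}(S_i)$ is null, and conclude from connectedness that it is a singleton. Your stratification alternative is not needed once (N5) is assumed, and the paper does not pursue it.
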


Since Conditions (A2.1)--(A2.6) of Thm.~2.1 on p.~127 in \cite{kushner2003stochastic} are now proven satisfied, the proof of Proposition~\ref{prop:dropout_converges} is now completed.
\BlackBox

\subsubsection{Boundedness of \texorpdfstring{$\itr{\Delta}{t+1}$}{the iterates} in expectation -- Proof \texorpdfstring{of Lemma~\ref{lem:Boundedness_of_the_variance_of_stochastic_iterands}}{}}
\label{secappendix:Boundedness_of_the_variance_of_stochastic_iterands}

We need to carefully track all sequences of random variables created by a dropout algorithm throughout this proof, which we state here first explicitly.

\begin{definition}[Dropout iterates]
    \label{definition:dropout_sgd}
    During its $(t+1)$-st \emph{feedforward step}, the algorithm iteratively calculates
    \begin{align}
        \itr{A_0}{t+1}
= \itr{X}{t+1},
        \quad
\itr{A_i}{t+1}
= \sigma( (\itr{W_i}{t} \odot \itr{F_i}{t+1}) \itr{A_{i-1}}{t+1} )
        \label{eqn:Feedforward_variables_of_Dropout}
    \end{align}
    for $i = 1, 2, \ldots, L-1$, to output
    \begin{equation}
        \Psi_{\itr{F}{t+1} \odot \itr{W}{t}}^{}(\itr{X}{t+1} )
        = (\itr{W_L}{t} \odot \itr{F_L}{t+1}) \itr{A_{L-1}}{t+1} = \itr{A_{L}}{t+1}.
    \end{equation}
    Subsequently for its $(t+1)$-st \emph{backpropagation step} the algorithm calculates
    \begin{align}
        \itr{R_{L}}{t+1}
         & =
        (\itr{Y}{t+1} - (\itr{W_{L}}{t} \odot \itr{F_{L}}{t+1}) \itr{A_{L-1}}{t+1})
        \in \R^{d_L} ,
        \nonumber                                                                                                                                                    \\
        \itr{R_{j}}{t+1}
         & = ((\itr{W_{j+1}}{t} \odot \itr{F_{j+1}}{t+1})^T \itr{R_{j+1}}{t+1}) \odot (\sigma^{\prime}( (\itr{W_{j}}{t} \odot \itr{F_{j}}{t+1}) \itr{A_{j-1}}{t+1}))
        \in \R^{d_{i}},
        \label{eqn:Backpropagation_variables_of_Dropout}
    \end{align}
    iteratively for $j = L-1, \ldots, 1$.
    The algorithm then calculates
    \begin{align}
        \itr{\Delta_i}{t+1}
        = -2\itr{F_i}{t+1} \odot (\itr{R_{i}}{t+1}(\itr{A_{i-1}}{t+1})^{\mathrm{T}})
        \label{eqn:Random_update_direction_of_Dropout}
    \end{align}
    for $i = 1, \ldots, L$, and finally updates all weights according to \eqref{update_sgd}.
    \label{def:Dropout_SGD_algorithm}
\end{definition}

The idea of the proof of Lemma~\ref{lem:Boundedness_of_the_variance_of_stochastic_iterands} is to expand the terms in $\itr{ \Delta_{i} }{t+1}$ defined in Definition~\ref{def:Dropout_SGD_algorithm} recursively, and identify a polynomial in variables $\{ \pnorm{Y}{2}^n \pnorm{X}{2}^m \}_{m \in \naturalNumbersZero}$ and $n=0,1,2$.
We will use several bounds that pertain to the Frobenius norm, written down in Lemma~\ref{lemma:inequalities_norm_matrices} in \refAppendixSection{appendix:inequalities_norm_matrix}, and we will iterate these in a moment.

First, we will prove two bounds on the activation function applied to an arbitrary matrix $A$.
Recall that $\sigma \in C_{PB}^2(\R)$ by assumption (N1).
There thus (i) exists some $C_0, k_0 > 0$ such that $| \sigma(z) | \leq C_0(1+z^2)^{k_0}$ for all $z \in \realNumbers$, and there exists some $C_1, k_1 > 0$ such that $| \sigma^{\prime}(z) | \leq C_1 (1+z^2)^{k_1}$ for all $z \in \realNumbers$.
Let $k = \max \{ 1, k_0, k_1 \}$.
Then
\begin{equation}
    \pnorm{\sigma(A)}{\mathrm{F}}^2
    = \sum_{i,j} | \sigma( A_{ij} ) |^2
    \eqcom{i}\leq C_0 \sum_{i,j} ( 1 + A_{ij}^2 )^{k}
    \eqcom{Lemma~\ref{lemma:inequalities_norm_matrices}}\leq C_2 ( 1 + \pnorm{A}{\mathrm{F}} )^{2k}
    \label{eqn:Bound_on_Frobenius_norm_of_poly_bounded_sigma}
\end{equation}
for some constant $C_2 > 0$.
Similarly there exists some $C_3 > 0$ such that $\norm{\sigma^{\prime}(A)}_F \leq C_3 (1 + \pnorm{A}{\mathrm{F}} )^{k}$.
Note furthermore that (ii) for all $l \geq 0$, by submultiplicativity of the Frobenius norm,
\begin{align}
    ( 1 + \pnorm{ A \sigma(B) }{\mathrm{F}} )^l
     &
    \eqcom{ii}\leq ( 1 + \pnorm{A}{\mathrm{F}} \pnorm{\sigma(B)}{\mathrm{F}} )^l
    \nonumber \\  &
       \eqcom{\ref{eqn:Bound_on_Frobenius_norm_of_poly_bounded_sigma}}\leq \bigl( 1 + C_2^{1/2} \pnorm{A}{\mathrm{F}} (1 + \pnorm{B}{\mathrm{F}})^k \bigr)^l
       \leq C_4 ( 1 + \pnorm{A}{\mathrm{F}})^l (1 + \pnorm{B}{\mathrm{F}})^{kl}
       \label{eqn:Bound_on_Frobenius_norm_plus_one_of_A_sigma_B}
\end{align}
for $C_4 = \max \{ 1, C_2^{l/2} \} > 0$.
Again, a similar bound holds for $\sigma'$.

Next, note that we have by (i) submultiplicativity and Lemma~\ref{lemma:inequalities_norm_matrices} that
\begin{equation}
    \pnorm{\itr{\Delta_i}{t+1}}{\mathrm{F}}
    = \pnorm{ \itr{F_i}{t+1} \odot ( \itr{R_i}{t+1} ( \itr{A_{i-1}}{t+1} )^{\mathrm{T}} ) }{\mathrm{F}}
    \eqcom{i} \leq \pnorm{\itr{F_i}{t+1}}{\mathrm{F}} \pnorm{\itr{R_{i}}{t+1}}{\mathrm{F}} \pnorm{\itr{A_{i-1}}{t+1}}{\mathrm{F}}.
\end{equation}
The first term is bounded with probability one: $\itr{F_{i,r,l}}{t} \in \{ 0,1 \}$ for all $i,r,l,t$.
For the second term, consider the following bound:
\begin{align}
    \pnorm{\itr{R_{i}}{t+1}}{\mathrm{F}}
     &
    \eqcom{\ref{eqn:Backpropagation_variables_of_Dropout}}= \pnorm{ (\itr{W_{i+1}}{t} \odot \itr{F_{i+1}}{t+1})^{\mathrm{T}} \itr{R_{i+1}}{t + 1} \odot \sigma^{\prime} \bigl( ( \itr{W_{i}}{t} \odot \itr{F_{i}}{t+1} ) \itr{A_{i-1}}{t+1} \bigr)}{\mathrm{F}}
    \nonumber \\  &
       \eqcom{Lemma~\ref{lemma:inequalities_norm_matrices}}\leq \pnorm{ \itr{W_{i+1}}{t} \odot \itr{F_{i+1}}{t+1} }{\mathrm{F}} \pnorm{ \sigma^{\prime} \bigl( ( \itr{W_{i}}{t} \odot \itr{F_{i}}{t+1} ) \itr{A_{i-1}}{t+1} \bigr) }{\mathrm{F}} \pnorm{\itr{R_{i+1}}{t + 1}}{\mathrm{F}}
       \label{eqn:Bounded_variance_recursive_inequalities__Nr_1}
\end{align}
for $1 \leq i \leq L$, where we have also used the submultiplicative property.
For the third term, consider the next bound: (i) recursing \eqref{eqn:Bound_on_Frobenius_norm_plus_one_of_A_sigma_B} with $A = I$ and $B = (\itr{W_j}{t} \odot \itr{F_j}{t+1} ) \itr{A_{j-1}}{t+1}$ etc, we obtain that there exists some $C_5 > 0$, say, so that
\begin{align}
    \pnorm{\itr{A_{j}}{t+1}}{\mathrm{F}}
     &
    \eqcom{\ref{eqn:Feedforward_variables_of_Dropout}}= \pnorm{\sigma((\itr{W_j}{t} \odot \itr{F_j}{t+1}) \itr{A_{j-1}}{t+1})}{\mathrm{F}}
    \eqcom{\ref{eqn:Bound_on_Frobenius_norm_of_poly_bounded_sigma}} \leq C_2 ( 1 + \pnorm{(\itr{W_j}{t} \odot \itr{F_j}{t+1}) \itr{A_{j-1}}{t+1} }{\mathrm{F}})^k
    \label{eqn:Bounded_variance_recursive_inequalities__Nr_2}
    \\  &
       \eqcom{Lemma~\ref{lemma:inequalities_norm_matrices}}\leq C_2 ( 1 + \pnorm{ \itr{W_j}{t} \odot \itr{F_j}{t+1} }{\mathrm{F}} )^k ( 1 + \pnorm{\itr{A_{j-1}}{t+1} }{\mathrm{F}} )^k
    \nonumber \\  &
       \eqcom{i}\leq C_5 \bigl( 1 + \pnorm{\itr{X}{t+1}}{2} \bigr)^{k^{j}} \prod_{l=1}^{j-1} \bigl( 1 + \pnorm{ \itr{W_l}{t} \odot \itr{F_l}{t+1} }{\mathrm{F}} \bigr)^{k^{j-l}}
       \nonumber
\end{align}
for $j = 1, 2, \ldots, L-1$.
Similar to the derivation in \refEquation{eqn:Bounded_variance_recursive_inequalities__Nr_2}, we obtain instead with $\sigma^{\prime}$ that there exists some $C_6 > 0$ such that
\begin{equation}
    \pnorm{\sigma^{\prime}((\itr{W_j}{t} \odot \itr{F_j}{t+1}) \itr{A_{j-1}}{t+1})}{\mathrm{F}} \leq C_6 \bigl( 1 + \pnorm{\itr{X}{t+1}}{2} \bigr)^{k^{j}} \prod_{l=1}^{j-1} \bigl( 1 + \pnorm{ \itr{W_l}{t} \odot \itr{F_l}{t+1} }{\mathrm{F}} \bigr)^{k^{j-l}}.
    \label{eqn:Bounded_variance_recursive_inequalities__Nr_3}
\end{equation}
Recall that $
    \pnorm{\itr{\Delta_i}{t+1}}{\mathrm{F}}
    \leq \pnorm{\itr{F_i}{t+1}}{\mathrm{F}} \pnorm{\itr{R_{i}}{t+1}}{\mathrm{F}} \pnorm{\itr{A_{i-1}}{t+1}}{\mathrm{F}}
$.
This, together with using \refEquation{eqn:Bounded_variance_recursive_inequalities__Nr_1} repeatedly for $j = i, \ldots, L-1$, and \eqref{eqn:Bounded_variance_recursive_inequalities__Nr_2}, \eqref{eqn:Bounded_variance_recursive_inequalities__Nr_3}, yields the following inequality
\begin{align}
     &
    \pnorm{\itr{\Delta_i}{t+1}}{\mathrm{F}}
    \eqcom{\ref{eqn:Bounded_variance_recursive_inequalities__Nr_1}}\leq \pnorm{\itr{F_i}{t+1}}{\mathrm{F}} \pnorm{\itr{R_L}{t+1}}{\mathrm{F}} \pnorm{\itr{A_{i}}{t+1}}{\mathrm{F}} \prod_{j=i}^{L-1} \pnorm{ \itr{W_{j+1}}{t} \odot \itr{F_{j+1}}{t+1} }{\mathrm{F}} \pnorm{ \sigma^{\prime} \bigl( ( \itr{W_{j}}{t} \odot \itr{F_{j}}{t+1} ) \itr{A_{j-1}}{t+1} \bigr) }{\mathrm{F}}
    \nonumber \\  &
       \eqcom{\ref{eqn:Bounded_variance_recursive_inequalities__Nr_2}}\leq C_5 \pnorm{\itr{F_i}{t+1}}{\mathrm{F}} \bigl( 1 + \pnorm{\itr{X}{t+1}}{2} \bigr)^{k^{i}} \prod_{l=1}^{i-1} \bigl( 1 + \pnorm{ \itr{W_l}{t} \odot \itr{F_l}{t+1} }{\mathrm{F}} \bigr)^{k^{i-l}}
    \nonumber \\  &
       \phantom{\leq} \times \pnorm{\itr{R_L}{t+1}}{\mathrm{F}} \prod_{j=i}^{L-1} \pnorm{ \itr{W_{j+1}}{t} \odot \itr{F_{j+1}}{t+1} }{\mathrm{F}} \pnorm{ \sigma^{\prime} \bigl( ( \itr{W_{j}}{t} \odot \itr{F_{j}}{t+1} ) \itr{A_{j-1}}{t+1} \bigr) }{\mathrm{F}}
    \nonumber \\  &
       \eqcom{\ref{eqn:Bounded_variance_recursive_inequalities__Nr_3}}\leq C_7 \pnorm{\itr{F_i}{t+1}}{\mathrm{F}} \bigl( 1 + \pnorm{\itr{X}{t+1}}{2} \bigr)^{k^{i}} \prod_{l=1}^{i-1} \bigl( 1 + \pnorm{ \itr{W_l}{t} \odot \itr{F_l}{t+1} }{\mathrm{F}} \bigr)^{k^{i-l}}
    \nonumber \\  &
       \phantom{\leq} \times \pnorm{\itr{R_L}{t+1}}{\mathrm{F}} \prod_{j=i}^{L-1} \pnorm{ \itr{W_{j+1}}{t} \odot \itr{F_{j+1}}{t+1} }{\mathrm{F}} \bigl( 1 + \pnorm{\itr{X}{t+1}}{2} \bigr)^{k^{j}} \prod_{l=1}^{j} \bigl( 1 + \pnorm{ \itr{W_l}{t} \odot \itr{F_l}{t+1} }{\mathrm{F}} \bigr)^{k^{j-l}}
    \nonumber \\  &
       \leq C_7 \pnorm{\itr{F_i}{t+1}}{\mathrm{F}} \pnorm{\itr{R_{L}}{t+1}}{\mathrm{F}} \Bigl( \prod_{j=i}^{L-1} \pnorm{ \itr{W_{i+1}}{t} \odot \itr{F_{i+1}}{t+1} }{\mathrm{F}} \Bigr)
    \nonumber \\  &
       \phantom{\leq}\times \Bigl( \prod_{j=i}^{L-1} \bigl( 1 + \pnorm{\itr{X}{t+1}}{2} \bigr)^{2k^{j}} \prod_{l=1}^{j} \bigl( 1 + \pnorm{ \itr{W_l}{t} \odot \itr{F_l}{t+1} }{\mathrm{F}} \bigr)^{2k^{j-l}} \Bigr)
    \nonumber \\  &
       = C_7 \pnorm{\itr{F_i}{t+1}}{\mathrm{F}} \pnorm{\itr{R_{L}}{t+1}}{\mathrm{F}} \Bigl( \prod_{j=i}^{L-1} \pnorm{ \itr{W_{i+1}}{t} \odot \itr{F_{i+1}}{t+1} }{\mathrm{F}} \Bigr)
    \nonumber \\  &
       \phantom{\leq}\times \bigl( 1 + \pnorm{\itr{X}{t+1}}{2} \bigr)^{ \sum_{j=i}^{L-1} 2 k^{j}} \Bigl( \prod_{j=i}^{L-1} \prod_{l=1}^{j} \bigl( 1 + \pnorm{ \itr{W_l}{t} \odot \itr{F_l}{t+1} }{\mathrm{F}} \bigr)^{2k^{j-l}} \Bigr).
       \label{eqn:Bounded_variance_recursive_inequalities__Nr_4}
\end{align}
Lastly, we bound $\pnorm{\itr{R_{L}}{t+1}}{\mathrm{F}}$.
By applying (i) subadditivity of the norm $\norm{A + B}_{F} \leq \norm{A}_{F} + \norm{B}_{F}$ and then using the elementary bound $(a+b)^2 \leq 2 ( a^2 + b^2 )$ as well as submultiplicativity, we obtain
\begin{align}
     &
    \pnorm{\itr{R_{L}}{t+1}}{\mathrm{F}}
    \eqcom{\ref{eqn:Backpropagation_variables_of_Dropout}}= \pnorm{\itr{Y}{t+1} - ( \itr{W_L}{t} \odot \itr{F_L}{t+1} ) \itr{A_{L-1}}{t+1}}{\mathrm{F}}
    \label{eqn:Bounded_variance_recursive_inequalities__Nr_5}
    \\  &
       \eqcom{i}\leq \pnorm{\itr{Y}{t+1}}{2}^2 + \pnorm{\itr{W_L}{t} \odot \itr{F_L}{t+1} }{\mathrm{F}} \pnorm{\itr{A_{L-1}}{t+1}}{{\mathrm{F}}}
    \nonumber \\  &
       \eqcom{\ref{eqn:Bounded_variance_recursive_inequalities__Nr_2}}\leq \pnorm{\itr{Y}{t+1}}{2} + \pnorm{ \itr{W_L}{t} \odot \itr{F_L}{t+1} }{\mathrm{F}} \bigl( 1 + \pnorm{\itr{X}{t+1}}{2} \bigr)^{k^{L-1}} \prod_{l=1}^{L-1} \bigl( 1 + 2 \pnorm{ \itr{W_l}{t} \odot \itr{F_l}{t+1} }{\mathrm{F}} \bigr)^{k^{L-l}}.
       \nonumber
\end{align}
By combining inequalities \refEquation{eqn:Bounded_variance_recursive_inequalities__Nr_4}, \refEquation{eqn:Bounded_variance_recursive_inequalities__Nr_5}, and upper bounding the exponent $k^{L-1}$ of the term $1 + \pnorm{\itr{X}{t+1}}{\mathrm{F}}$ in \refEquation{eqn:Bounded_variance_recursive_inequalities__Nr_5} by $2\sum_{j=1}^{L-1} k^{j}$, we conclude that
\begin{align}
     &
    \pnorm{\itr{\Delta_i}{t+1}}{\mathrm{F}}
    \nonumber \\  &
       \leq C_8 \pnorm{\itr{Y}{t+1}}{2} \bigl( 1 + \pnorm{\itr{X}{t+1}}{2} \bigr)^{ 2 \sum_{j=1}^{L-1} k^{j}} \pnorm{\itr{F_i}{t+1}}{\mathrm{F}} P_1\bigl( \pnorm{ \itr{W_1}{t} \odot \itr{F_1}{t+1} }{\mathrm{F}}, \ldots, \pnorm{ \itr{W_L}{t} \odot \itr{F_L}{t+1} }{\mathrm{F}} \bigr)
    \nonumber \\  &
       \phantom{\leq}+ C_9 \bigl( 1 + \pnorm{\itr{X}{t+1}}{2} \bigr)^{ 2 \sum_{j=1}^{L} k^{j}} \pnorm{\itr{F_i}{t+1}}{\mathrm{F}} P_2( \pnorm{ \itr{W_1}{t} \odot \itr{F_1}{t+1} }{\mathrm{F}}, \ldots, \pnorm{ \itr{W_L}{t} \odot \itr{F_L}{t+1} }{\mathrm{F}} )
       \label{eqn:Intermediate_bound_on_Delta_irlt__Nr1}
\end{align}
for $i=1, \ldots, L$ and some constants $C_8, C_9$ and polynomials $P_1(z_1,\ldots,z_L), P_2(z_1,\ldots,z_L)$, say, the latter both in $L$ variables.
Because of the projection and by definition of $\mathcal{H}$, there exists a constant $M$ such that $\pnorm{\itr{W_i}{t}}{\mathrm{F}} \leq M$ with probability one for all $i = 1, \ldots, L$, $t \in \naturalNumbersPlus$.
Furthermore, $\pnorm{ \itr{F_i}{t} }{\mathrm{F}} \leq \max_{i=0,\ldots,L-1} \sqrt{ d_i d_{i+1} }$ with probability one for all $i = 1, \ldots, L$, $t \in \naturalNumbersPlus$.
These two bounds, together with \refEquation{eqn:Intermediate_bound_on_Delta_irlt__Nr1} and the fact that $P_1, P_2$ are polynomials, as well as the hypothesis that $\expectation{ \pnorm{Y}{2}^m \pnorm{X}{2}^n } < \infty \, \forall m \in \{ 0, 1, 2 \}, n \in \naturalNumbersZero$, implies the result.
\BlackBox

\subsubsection{Conditional expectation of \texorpdfstring{$\itr{\Delta}{t+1}$}{the iterates} -- Proof \texorpdfstring{of Lemma~\ref{lem:Expectation_cond_Ft_of_Delta_i}}{}}
\label{secappendix:Expectation_cond_Ft_of_Delta_i}

Let $i \in \{ 1, \ldots, L\}$, $r \in \{ 1, \ldots, d_{i+1} \}$ and $l \in \{ 1, \ldots, d_{i} \}$.
Recall that $\mathcal{F}_t$ is the smallest $\sigma$-algebra generated by $ \{ \itr{W}{0}, ( \itr{F}{s}, \itr{X}{s}, \itr{Y}{s}) \}_{s \leq t}$, and note that $\itr{W}{t}$ is $\mathcal{F}_t$-measurable.
The (i) $\mathcal{F}_t$-measurability of $\itr{W}{t}$ together with the (ii) hypothesis that the sequences of random variables $\process{ ( \itr{F}{s}, \itr{X}{s}, \itr{Y}{s} ) }{ s \in \naturalNumbersPlus}$ is i.i.d.
\  implies that
\begin{align}
    \expectation{ \itr{\Delta_{i,r,l}}{t} | \mathcal{F}_t }
     &
    \eqcom{\ref{eqn:Random_direction_Delta_as_function_of_Backpropagation}}= \expectationBig{ \bigl( \itr{F_{i, r, l}}{t+1}  \mathrm{B}_{ \itr{F}{t+1} \odot \itr{W}{t} }( \itr{X}{t+1}, \itr{Y}{t+1} ) \bigr)_{i,r,l} \Big| \mathcal{F}_t }
    \nonumber \\                &
                     \eqcom{i,ii}= \int  F_{i, r, l} \mathrm{B}_{ F \odot \itr{W}{t} }( X, Y )_{i,r,l}  \d{ \probability{ \itr{F}{t+1} = F, \itr{X}{t+1} = X, \itr{Y}{t+1} = Y } }
    \nonumber \\                &
                     \eqcom{\ref{eqn:Backpropagation_calculates_the_gradient_operator_pointwise}}= \int \Bigl( F_{i, r, l}  \frac{ \partial l( \Psi_{F \odot \itr{V}{t}}(X), Y ) }{ \partial(F_{i, r, l}
                         V_{i, r, l}) } \Bigr)(\itr{W}{t}) \d{ \probability{ \itr{F}{t+1} = F, \itr{X}{t+1} = X, \itr{Y}{t+1} = Y } }
    \nonumber \\  &
       = \int \frac{ \partial l( \Psi_{F \odot \itr{V}{t}}(X), Y ) }{\partial V_{i, r, l} }(\itr{W}{t}) \d{ \probability{ \itr{F}{t+1} = F, \itr{X}{t+1} = X, \itr{Y}{t+1} = Y } }.
       \label{eqn:Prior_to_interchange_of_the_derivative_and_expectation}
\end{align}
Next, we need to check that we can exchange the derivative and expectation.
Note that we have the same assumptions $\expectation{ \pnorm{Y}{2}^m \pnorm{X}{2}^n } < \infty \, \forall m \in \{ 0, 1, 2 \}, n \in \naturalNumbersPlus$ as for Lemma~\ref{lem:Boundedness_of_the_variance_of_stochastic_iterands}.
as well as that $\sigma \in C^r_{\mathrm{PB}}(\realNumbers)$.
Therefore, by \eqref{eqn:Intermediate_bound_on_Delta_irlt__Nr1} in Lemma~\ref{lem:Boundedness_of_the_variance_of_stochastic_iterands} we have that $|\itr{\Delta_{i,r,l}}{t+1}|$ is upper bounded and moreover $\expectation{\itr{\Delta_{i,r,l}}{t+1}} \leq C_{\mathcal{H}}$ for some $C_{\mathcal{H}} \leq \infty$ only dependent on $\mathcal{H}$.
The interchange is then warranted by the dominated convergence theorem.
Hence continuing from \eqref{eqn:Prior_to_interchange_of_the_derivative_and_expectation}, we obtain
\begin{align}
    \expectation{ \itr{\Delta_{i,r,l}}{t} | \mathcal{F}_t }
     &
    = \frac{ \partial }{ \partial W_{i,r,l} } \int l( \Psi_{F \odot \itr{W}{t}}(X), Y ) \d{ \probability{ \itr{F}{t+1} = F, \itr{X}{t+1} = X, \itr{Y}{t+1} = Y } }
    \nonumber \\  &
       \eqcom{\ref{eqn:Dropouts_emperical_risk_function}}= \frac{ \partial \mathcal{D}(\itr{W}{t}) }{ \partial W_{i,r,l} }.
       \nonumber
\end{align}
If $\sigma \in C^r_{\mathrm{PB}}(\realNumbers)$, then for any multi-index $s$ on the set of weights, a bound similar to \eqref{eqn:Intermediate_bound_on_Delta_irlt__Nr1} holds by the chain rule:
\begin{align}
    \abs{ \partial^{s} l(Y, \Psi_{W \odot F} (X))}
     &
    \leq \pnorm{Y}{\mathrm{F}} P_{1,s}(\norm{W_1}_{\mathrm{F}}, \ldots, \norm{W_L}_{\mathrm{F}}, \{ \pnorm{X}{2}^j \}_{j=1}^{n_{s,1}})
    \nonumber \\  & \phantom{\leq}
       + P_{2,s}(\norm{W_1}_{\mathrm{F}}, \ldots, \norm{W_L}_{\mathrm{F}}, \{ \pnorm{X}{2}^j \}_{j=1}^{n_{s,2}} )
\end{align}
where $P_{1,s}, P_{2,s}$ are polynomials and $n_{s,1}, n_{s,2}$ are the top exponents in the expansion in $\pnorm{X}{\mathrm{F}}$.
Hence, using the assumption $\expectation{ \pnorm{Y}{2}^m \pnorm{X}{2}^n } < \infty \, \forall m \in \{ 0, 1, 2 \}, n \in \naturalNumbersPlus$, we obtain for any $W \in \mathcal{K} \subset \mathcal{W}$ a compact set that $\expectation{ \abs{\partial^{s} l(Y, \Psi_{W \odot F} (X))}} \leq C_{\mathcal{K}}$ .
In particular we can apply the dominated convergence theorem and conclude $\mathcal{D}(W) \in C^{r-1}(\mathcal{W})$ with $\partial^{s} \mathcal{D}(W) = \expectation{\partial^{s} l(Y, \Psi_{W \odot F} (X))}$.
\BlackBox

\subsubsection{Constant \texorpdfstring{$\mathcal{D}(W)$}{D(W)} on a critical set -- Proof \texorpdfstring{of Lemma~\ref{lem:Dw_is_constant_on_Si}}{}}

\label{secappendix:Dw_is_constant_on_Si}

\noindent
We use Sard's theorem \citep{sard1942measure} to prove Lemma~\ref{lem:Dw_is_constant_on_Si}, which gives sufficient conditions for condition (A2.6):

\begin{proposition}
    \citep{sard1942measure}
    Let $f:M \to N$ be a $f \in C^r$ map between manifolds with $\dim(M) = m$, $\dim(N) = n$.
    Let $\mathrm{Crit}(f)= \{ x \in M \albert{~:~} \nabla f(x) =0 \}$ be the set of critical points of $f$.
    If $r > m/n - 1$, then $f(\mathrm{Crit}(f))$ has measure zero.
    \label{prop:Sard}
\end{proposition}

\noindent
\emph{Proof of Lemma~\ref{lem:Dw_is_constant_on_Si}.}
By Lemma~\ref{lem:Expectation_cond_Ft_of_Delta_i}, we have $\mathcal{D}(W) \in C^r(\mathcal{W})$.
By assumption (N5) we have that if $W \in \partial \mathcal{H}$ and $\mathcal{D}(W) + \pi(W) = 0$, then $\mathcal{D}(W)=0$.
Furthermore $W \in S_j$ for some $j$, i.e., the critical points of $\mathcal{D}(W) + \pi(W)$ are $\{ W \in \mathcal{W} \mid \nabla \mathcal{D}(W) = 0\} \cap \mathcal{H}$.
We apply Sard's theorem (Proposition~\ref{prop:Sard}) to $\mathcal{D}(W) $.
We have that if $r \geq \dim(\mathcal{W})$, then $\mathcal{D}(S_i) \subseteq \R$ has measure zero.
Since $S_i$ is connected there is a continuous path $z_{a,b}:[0,1] \to S_i$ joining any two points $a,b \in S_i$.
By continuity of $\mathcal{D}(W)$ we must have then $\mathcal{D}(a)=\mathcal{D}(b)$, since otherwise we would have $[\mathcal{D}(a),\mathcal{D}(b)] \subseteq \mathcal{D}(S_i)$ which has positive measure in $\R$.
Therefore $\mathcal{D}(S_i)$ must be a constant.
\BlackBox

Remark that in Lemma~\ref{lem:Dw_is_constant_on_Si} the condition $r \geq \dim(\mathcal{W})$ cannot immediately be eliminated.
When $r < \dim(\mathcal{W})$, there are examples of functions which are not constant on their connected critical sets, see e.g.\ \cite{hajlasz2003whitney}.

\section{Proof of Propositions~\ref{prop:convergence_rate_generic_dropout} and \ref{prop:convergence_rate_generic_dropout_scaled}}
\label{sec:appendix_proof_generic_rate_dropout}
We use standard tools for proving convergence to an $\epsilon$-stationary point (for a reference, see \cite{bottou2018optimization}).
We require first the following bounds on the variance induced by dropout.

\begin{lemma}
    Assume that $F$ is a random variable satisfying (Q4).
    If $f$ is a vector of random variables with distribution $F$, then
    \begin{itemize}
        \item[(i)] $\expectationBig{\pnorm{f - \E[f]}{1}} = 2Np(1-p)$.
        \item[(ii)] $\expectationBig{\pnorm{f - \E[f]}{1}^2} = 2N^2p(1-p)$.
    \end{itemize}
    \label{lem:first_second_moment_bernoulli}
\end{lemma}
\begin{proof}
    We prove first (i).
    Recall that $f \in \{0,1\}^{N}$.
    If we denote by $f_i$ the $i$th entry of $f$, then note that from (Q4) $\probability{f_i = 1} = p$ and so $\expectation{|f_i - \E[f_i]|} = \expectation{|f_i - p|} = 2p(1-p)$.
    From linearity (i) follows.
    For (ii), we have
    \begin{align}
        \expectationBig{\pnorm{f - \E[f]}{1}^2} & = \sum_{i} \expectationBig{|f_i - p|^2} + \sum_{i \neq j} \expectationBig{|f_i - p||f_j - p|} \nonumber \\
                                                & \leq 2Np(1-p) + 2N(N-1)p(1-p) = 2N^2p(1-p),
    \end{align}
    where in the last inequality we have used the Cauchy--Schwartz inequality.
\end{proof}

In order to prove both Propositions~\ref{prop:convergence_rate_generic_dropout} and \ref{prop:convergence_rate_generic_dropout_scaled} simultaneously, we will temporarily redefine in this section $\mathsf{D}$ as
\begin{equation}
    \nabla \mathsf{D}(w) = \E[ cF \odot \nabla \mathsf{U}(W \odot cF)],
    \label{eqn:definition_D_generic_alternative}
\end{equation}
where $c > 0$ is a constant.
Later on we will specify both $c=1$ for Proposition~\ref{prop:convergence_rate_generic_dropout} and $c=1/p$ for Proposition~\ref{prop:convergence_rate_generic_dropout_scaled}, respectively.

\begin{lemma}
    Assume (Q3) and (Q4), that is, $\nabla \mathsf{U}$ is $\ell$-Lipschitz and the distribution of the filters is $\{0,1\}$-valued.
    Then, $\nabla \mathsf{D}$ is also $c^2\ell$-Lipschitz.
    \label{lem:lipschitz_D}
\end{lemma}
\begin{proof}
    Using (i) Jensen's inequality with the norm, we have for a fixed $w, s \in \mathcal{W}$ that
    \begin{align}
        \pnorm{\nabla \mathsf{D}(w) - \nabla \mathsf{D}(s)}{2} & = \pnorm{\expectationWrt{cf \odot \nabla \mathsf{U}(w \odot cf) - cf \odot \nabla \mathsf{U}(s \odot cf)}{f}}{2} \nonumber               \\
                                                               & \eqcom{i}\leq c\expectationBigWrt{\pnorm{f \odot \nabla \mathsf{U}(w \odot cf) - f \odot \nabla \mathsf{U}(s \odot cf)}{2}}{f} \nonumber \\
                                                               & \eqcom{ii} \leq c\expectationBigWrt{\pnorm{\nabla \mathsf{U}(w \odot cf) - \nabla \mathsf{U}(s \odot cf)}{2}}{f} \nonumber               \\
                                                               & \eqcom{iii} \leq c\ell \expectationBigWrt{\pnorm{w \odot cf - s \odot cf}{2}}{f} \nonumber                                               \\
                                                               & \eqcom{ii} \leq c^2\ell \expectationBigWrt{\pnorm{w - s}{2}}{f} = c^2\ell \pnorm{w - s}{2}
    \end{align}
    where we have also used (ii) the fact that for a vector $u$ and $\{0,1\}$-valued vector $f$ we have $\pnorm{f \odot u}{2} \leq \pnorm{u}{2}$, (iii) $\nabla \mathsf{U}$ is $\ell$-Lipschitz.
\end{proof}

The proof of the following lemma can be found in \refAppendixSection{sec:Appendix__proof_bound_variance}.
\begin{lemma}
    Assume (Q1)--(Q4), then for any $w \in \mathcal{W}$ with $\pnorm{w}{2} < R$, we have
    \begin{align}
        \expectationBig{\pnorm{\nabla \mathsf{D}(w) - cf \odot \nabla \mathsf{U}(w \odot cf)}{2}^2} & \leq c^2Np(1-p)\bigl(4S^2 + 6cN^2(\ell^2R^2 + 2c\ell R)\bigr).
    \end{align}
    \label{lem:bound_variance_generic_point}
\end{lemma}
We obtain in the next lemma a simple bound for the variance of the gradient that depends on the data.
\begin{lemma}
    Assume (Q1)--(Q4), then for any $w \in \mathcal{W}$, we have
    \begin{align}
        \expectationBigWrt{\pnorm{cf \odot \nabla \mathsf{U}(w \odot cf) - cf \odot \nabla r(w \odot cf, z)}{2}^2}{z,f} & \leq 4c^2pNS^2.
    \end{align}
    \label{lem:bound_variance_generic_point_data}
\end{lemma}
\begin{proof}
    We use first the definition of $\mathsf{U}$ as an expectation.
    We have
    \begin{align}
        \expectationBigWrt{\pnorm{cf \odot \nabla \mathsf{U}(w \odot cf) & - cf \odot \nabla r(w \odot cf, z)}{2}^2}{z,f} \nonumber                                                                                       \\
                                                                         & =c^2\expectationBigWrt{\pnorm{\expectationWrt{f \odot \nabla r(w \odot cf, z_1)}{z_1} - f \odot \nabla r(w \odot cf, z)}{2}^2}{z,f} \nonumber  \\
                                                                         & \leq c^2\expectationBigWrt{\expectationBigWrt{\pnorm{f \odot (\nabla r(w \odot cf, z_1) - \nabla r(w \odot cf, z))}{2}}{z_1}^2}{z,f} \nonumber \\
                                                                         & \eqcom{i} \leq c^2\expectationBigWrt{\expectationBigWrt{2S\pnorm{f}{2}}{z_1}^2}{z,f} \nonumber                                                 \\
                                                                         & \eqcom{ii}\leq c^2\expectationBigWrt{4S^2\pnorm{f}{2}^2}{z,f} = 4c^2pNS^2,
    \end{align}
    where in (i) we have used the upper bound for $\pnorm{\nabla r(w \odot cf, z)}{2}$ from (Q2) and in (ii) that since $f_i \in \{0,1\}$ for all $i \in [N]$, we have $\pnorm{f}{2}^2 = \pnorm{f}{1}$ so using linearity with (Q4) the bound follows.
\end{proof}

By (Q3)--(Q4) and Lemma~\ref{lem:lipschitz_D}, $\nabla \mathsf{D}$ is $c^2\ell$-Lipschitz.
In this case, we can then use the following common argument: if $\nabla \mathsf{D}$ is $c^2\ell$-Lipschitz then we have the inequality
\begin{equation}
    \mathsf{D}(\itr{W}{t+1}) \leq \mathsf{D}(\itr{W}{t}) + \langle \nabla \mathsf{D}(\itr{W}{t}), \itr{W}{t+1} - \itr{W}{t}\rangle + \frac{c^2\ell}{2}\pnorm{\itr{W}{t+1} - \itr{W}{t}}{2}^2.
\end{equation}
We can then use the definition of $\itr{W}{t+1}$ to write
\begin{align}
    \mathsf{D}(\itr{W}{t+1})
     & \leq \mathsf{D}(\itr{W}{t}) -\itrd{\alpha}{t} \langle \nabla \mathsf{D}(\itr{W}{t}), c\itr{F}{t+1} \odot \nabla r(\itr{W}{t} \odot c\itr{F}{t+1}, \itr{Z}{t+1})\rangle \nonumber \\
     & + \frac{c^2\ell(\itrd{\alpha}{t})^2}{2}\pnorm{c\itr{F}{t+1} \odot \nabla r(\itr{W}{t} \odot c\itr{F}{t+1}, \itr{Z}{t+1})}{2}^2.
    \label{eqn:generic_inter1}
\end{align}
Let $\mathcal{F}_t$ be the $\sigma$-algebra of $(\itr{W}{0}, \itr{F}{1}, \itr{Z}{1}, \ldots, \itr{W}{t}, \itr{F}{t}, \itr{Z}{t})$.
Conditional on $\mathcal{F}_t$, $\itr{F}{t+1} \odot \nabla r(\itr{W}{t} \odot \itr{F}{t+1}, \itr{Z}{t+1})$ is an unbiased estimator of $\nabla \mathsf{D}(\itr{W}{t})$ so that by linearity
\begin{align}
    \expectationBig{ \Bigl \langle \nabla \mathsf{D}(\itr{W}{t}), c\itr{F}{t+1} \odot \nabla r(\itr{W}{t} \odot c\itr{F}{t+1}, \itr{Z}{t+1}) \Bigr \rangle | \mathcal{F}_t} = \pnorm{\nabla \mathsf{D}(\itr{W}{t})}{2}^2.
    \label{eqn:generic_inter2}
\end{align}
Similarly to \eqref{eqn:generic_inter2}, we can decompose
\begin{align}
      & \expectationBig{\pnorm{c\itr{F}{t+1} \odot \nabla r(\itr{W}{t} \odot c\itr{F}{t+1}, \itr{Z}{t+1})}{2}^2 \Big| \mathcal{F}_t} \nonumber                                                                                                       \\
    = & \expectationBig{\pnorm{c\itr{F}{t+1} \odot \nabla r(\itr{W}{t} \odot c\itr{F}{t+1}, \itr{Z}{t+1}) - c\itr{F}{t+1} \odot \nabla \mathsf{U}(\itr{W}{t} \odot c\itr{F}{t+1}) \nonumber                                                          \\
      & \phantom{\itr{F}{t+1} \odot \nabla r(\itr{W}{t} \odot \itr{F}{t+1}, \itr{Z}{t+1}) +itr{Z}{t+1}) +} + c\itr{F}{t+1} \odot \nabla \mathsf{U}(\itr{W}{t} \odot c\itr{F}{t+1})}{2}^2 \Big| \mathcal{F}_t} \nonumber                              \\
      & = \expectationBig{\pnorm{c\itr{F}{t+1} \odot \nabla \mathsf{U}(\itr{W}{t} \odot c\itr{F}{t+1})}{2}^2 \Big| \mathcal{F}_t} \nonumber                                                                                                          \\
      & +\expectationBig{\pnorm{c\itr{F}{t+1} \odot \nabla \mathsf{U}(\itr{W}{t} \odot c\itr{F}{t+1}) - c\itr{F}{t+1} \odot \nabla r(\itr{W}{t} \odot c\itr{F}{t+1}, \itr{Z}{t+1})}{2}^2 \Big| \mathcal{F}_t} \nonumber                              \\
      & + 2\expectationBig{ \Bigl \langle c\itr{F}{t+1} \odot \nabla \mathsf{U}(\itr{W}{t} \odot c\itr{F}{t+1}) - c\itr{F}{t+1} \odot \nabla r(\itr{W}{t} \odot c\itr{F}{t+1},\itr{Z}{t+1}), \nonumber                                               \\
      & \phantom{\Bigl \langle \itr{F}{t+1} \odot \nabla \mathsf{U}(\itr{W}{t} \odot \itr{F}{t+1}) - \itr{F}{t+1} - \itr{F}{t+1}} c\itr{F}{t+1} \odot \nabla \mathsf{U}(\itr{W}{t} \odot c\itr{F}{t+1}) \Bigr \rangle \Big| \mathcal{F}_t} \nonumber \\
      & = \expectationBig{\pnorm{c\itr{F}{t+1} \odot \nabla \mathsf{U}(\itr{W}{t} \odot c\itr{F}{t+1})}{2}^2 \Big| \mathcal{F}_t} \nonumber                                                                                                          \\
      & +\expectationBig{\pnorm{c\itr{F}{t+1} \odot \nabla \mathsf{U}(\itr{W}{t} \odot c\itr{F}{t+1}) - c\itr{F}{t+1} \odot \nabla r(\itr{W}{t} \odot c\itr{F}{t+1}, \itr{Z}{t+1})}{2}^2 \Big| \mathcal{F}_t},
    \label{eqn:generic_inter2andand}
\end{align}
where in the last step the cross-term vanishes since, by using the independence assumption of $\itr{Z}{t+1}$ and $\itr{F}{t}$.
If we take the expectation with respect to $\itr{Z}{t+1}$ first, then we find
\begin{equation}
    \expectationWrt{c\itr{F}{t+1} \odot \nabla r(\itr{W}{t} \odot c\itr{F}{t+1}, \itr{Z}{t+1})| \mathcal{F}_t}{\itr{Z}{t+1} } = c\itr{F}{t+1} \odot \nabla \mathsf{U}(\itr{W}{t} \odot c\itr{F}{t+1}).
\end{equation}
Similarly, we can add and substract $\nabla \mathsf{D}(\itr{W}{t})$ in the first term and repeat the argument with the definitions of $\nabla \mathsf{U}$ and $\nabla \mathsf{D}$ in \eqref{eqn:definition_D_generic_alternative}, where we take the expectation of \eqref{eqn:generic_inter2andand} with respect to $\itr{F}{t+1}$ instead.
A similar cross-term vanishes.
We then obtain
\begin{align}
    \expectationBig{\pnorm{c\itr{F}{t+1} & \odot \nabla r(\itr{W}{t} \odot c\itr{F}{t+1}, \itr{Z}{t+1})}{2}^2 \Big| \mathcal{F}_t} \leq \pnorm{\nabla \mathsf{D}(\itr{W}{t})}{2}^2 \nonumber                                                      \\
                                         & +\expectationBig{\pnorm{\nabla \mathsf{D}(\itr{W}{t}) - c\itr{F}{t+1} \odot \nabla \mathsf{U}(\itr{W}{t} \odot c\itr{F}{t+1})}{2}^2 \Big| \mathcal{F}_t} \label{eqn:generic_inter2and}                 \\
                                         & +\expectationBig{\pnorm{c\itr{F}{t+1} \odot \nabla \mathsf{U}(\itr{W}{t} \odot c\itr{F}{t+1}) - c\itr{F}{t+1} \odot \nabla r(\itr{W}{t} \odot c\itr{F}{t+1}, \itr{Z}{t+1})}{2}^2 \Big| \mathcal{F}_t}.
    \nonumber
\end{align}
Define the constant $J_c = S^2 + \frac{3}
    {2}N^2c(\ell^2 R^2 + 2c \ell R)$ depending on $c$.
Using the bounds of Lemma~\ref{lem:bound_variance_generic_point} together with assumption (Q5) and Lemma~\ref{lem:bound_variance_generic_point_data} in \eqref{eqn:generic_inter2and} we obtain
\begin{equation}
    \expectationBig{\pnorm{\itr{F}{t+1} \odot \nabla r(\itr{W}{t} \odot \itr{F}{t+1}, \itr{Z}{t+1})}{2}^2 \Big| \mathcal{F}_t} \leq \pnorm{\nabla \mathsf{D}(\itr{W}{t})}{2}^2 + 4c^2pNS^2 + 4c^2Np(1-p)J_c.
    \label{eqn:generic_inter2andandand}
\end{equation}

Substitute now \eqref{eqn:generic_inter2} and \eqref{eqn:generic_inter2andandand} in \eqref{eqn:generic_inter1}.
After taking the expectation, we can use a telescopic sum in \eqref{eqn:generic_inter1} with the previous bounds, which yields
\begin{align}
    \sum_{t=1}^{T} \itrd{\alpha}{t}\Bigl( 1-\frac{c^2\ell \itrd{\alpha}{t}}{2} \Bigr) \expectationBig{\pnorm{\nabla \mathsf{D}(\itr{W}{t})}{2}^2} \leq \expectation{\mathsf{D}(\itr{W}{0} & )} -\expectation{\mathsf{D}(\itr{W}{T})} \nonumber                 \\
                                                                                                                                                                                          & + 2 c^4\ell Np(S^2 + (1-p)J_c)\sum_{t=1}^{T} (\itrd{\alpha}{t})^2.
\end{align}
By (Q1) we have $\expectation{\mathsf{D}(\itr{W}{0})} - \expectation{\mathsf{D}(\itr{W}{T})} \leq 2M$ independently of $c$.
Assuming that $\itrd{\alpha}{t} < \frac{1}{c^2\ell}$ for all $t \in [T]$, we then have
\begin{equation}
    \min_{t \in [T]} \expectationBig{\pnorm{\nabla \mathsf{D}(\itr{W}{t})}{2}^2} \leq \frac{4M + 4 \ell c^4 Np(S^2 + (1-p)J_c)\sum_{t=1}^{T} (\itrd{\alpha}{t})^2}{\sum_{t=1}^{T} \itrd{\alpha}{t}}.
    \label{eqn:generic_inter3}
\end{equation}

We not proceed with proving Propositions~\ref{prop:convergence_rate_generic_dropout} and \ref{prop:convergence_rate_generic_dropout_scaled}.
\\
\noindent
\emph{Proof of Propositions~\ref{prop:convergence_rate_generic_dropout} (a) and \ref{prop:convergence_rate_generic_dropout_scaled} :}
If $\itrd{\alpha}{t}= \eta$ is a constant in \eqref{eqn:generic_inter3}, we find
\begin{equation}
    \min_{t \in [T]} \expectationBig{\pnorm{\nabla \mathsf{D}(\itr{W}{t})}{2}^2} \leq \frac{4M + 4\eta^2\ell c^4 Np(S^2 + (1-p)J_c)}{T\eta}.
\end{equation}
Minimizing the bound over $\eta$ yields that the minimum occurs at $\eta^2 = M/(\ell Nc^4p(S^2 + (1-p)J_c)T)$.
For this $\eta$, the bound reads
\begin{equation}
    \min_{t \in [T]} \expectationBig{\pnorm{\nabla \mathsf{D}(\itr{W}{t})}{2}^2} \leq 4\sqrt{c^4p(S^2 + (1-p)J_c)}\sqrt{\frac{M \ell N}{T}}.
    \label{eqn:generic_final_1}
\end{equation}

For proving Proposition~\ref{prop:convergence_rate_generic_dropout} (a), set $c=1$ in \eqref{eqn:generic_final_1} as well as $J_c = J = S^2 + \frac{3}
    {2}N^2(\ell^2 R^2 + 2\ell R)$.
Note finally that the condition $\eta < 1/(c^2\ell) = 1/\ell$ is satisfied, for example, if $p > M\ell /(NS^2T)$.

For proving Propostion~\ref{prop:convergence_rate_generic_dropout_scaled}, set $c=1/p$ in \eqref{eqn:generic_final_1} as well as $J_{1/p} = S^2 + \frac{3}
    {2}N^2(\ell^2 R^2 + 2\ell/p)/p$.
Note finally that the condition $\eta < 1/(c^2\ell) = p^2/\ell$ is satisfied, for example, if $p > M\ell/(NS^2T)$.

\noindent
\emph{Proof of Proposition~\ref{prop:convergence_rate_generic_dropout} (b):}
Let $c=1$ and denote $J_1 = J$ in \eqref{eqn:generic_inter3}.
We can also set $\itrd{\alpha}{t} = 1/(\ell \sqrt{t})$.
It is easily verified that for $T \geq 4$:
\begin{equation}
    \sum_{t=1}^{T} \itrd{\alpha}{t}
    >
    \frac{\sqrt{T}}{\ell}
    \quad
    \textnormal{and}
    \quad
    \sum_{t=1}^{T} (\itrd{\alpha}{t})^2
    <
    \frac{\log(T)}{\ell^2}
    .
\end{equation}
Substituting these bounds in \eqref{eqn:generic_inter3} yields the result.
\BlackBox

\subsection{Proof of \texorpdfstring{Lemma~\ref{lem:bound_variance_generic_point}}{bounded variance}}
\label{sec:Appendix__proof_bound_variance}

Noting that we have temporarily the definition $\nabla \mathsf{D}(w)=\expectation{cf \odot \nabla \mathsf{U}(w \odot cf)}$ we can write
\begin{align}
     & \expectationBig{\pnorm{\nabla \mathsf{D}(w) - cf \odot \nabla \mathsf{U}(w \odot cf)}{2}^2} = \expectationBigWrt{\pnorm{\expectationWrt{cf_2 \odot \nabla \mathsf{U}(w \odot cf_2) - cf_1 \odot \nabla \mathsf{U}(w \odot cf_1)}{f_2}}{2}^2}{f_1} \nonumber \\
     & = \expectationBigWrt{\pnorm{\expectationWrt{cf_2 \odot \nabla \mathsf{U}(w \odot cf_2) - cf_2 \odot \nabla \mathsf{U}(w \odot cf_1) + \label{eqn:generic_lemma_inter1}                                                                                      \\
     & \phantom{------ f_2 \odot \nabla \mathsf{U}(w \odot f_1) +}+ cf_2 \odot \nabla \mathsf{U}(w \odot cf_1) - cf_1 \odot \nabla \mathsf{U}(w \odot cf_1)}{f_2}}{2}^2}{f_1} \nonumber                                                                            \\
     & \eqcom{i} \leq \expectationBigWrt{\expectationBigWrt{\pnorm{cf_2 \odot (\nabla \mathsf{U}(w \odot cf_2) - \nabla \mathsf{U}(w \odot cf_1)) + c(f_2 - f_1) \odot \nabla \mathsf{U}(w \odot cf_1)}{2}}{f_2}^2}{f_1} \nonumber                                 \\
     & \eqcom{ii} \leq \expectationBigWrt{\expectationBigWrt{\pnorm{cf_2 \odot (\nabla \mathsf{U}(w \odot cf_2) - \nabla \mathsf{U}(w \odot cf_1))}{2} + \pnorm{c(f_2 - f_1) \odot \nabla \mathsf{U}(w \odot cf_1)}{2}}{f_2}^2}{f_1}, \nonumber
\end{align}
where (i) we have used Jensen's inequality for a vector-valued random variable $v$, namely $\pnorm{\E[v]}{2} \leq \E[\pnorm{v}{2}]$, and (ii) the subadditivity of the norm $\pnorm{a + b}{2} \leq \pnorm{a}{2} + \pnorm{b}{2}$ for any $a,b \in \R^N$.
We now note that
\begin{align}
     &
    \pnorm{cf_2 \odot (\nabla \mathsf{U}(w \odot cf_2) - \nabla \mathsf{U}(w \odot cf_1))}{2}^2
    = \sum_{i} c^2f_2^i | \nabla_i \mathsf{U}(w \odot cf_2) - \nabla_i \mathsf{U}(w \odot cf_1)|^2
    \nonumber \\  &
       \eqcom{i}
       \leq
       \sum_{i} c^2f_2^i \ell^2\pnorm{w \odot cf_2 - w \odot cf_1}{2}^2
       \leq
       \sum_{i} c^4 f_2^i \ell^2\pnorm{f_2 - f_1}{2}^2 \pnorm{w}{2}^2
       \eqcom{ii}
       \leq
       c^4\pnorm{f_2}{1} \pnorm{f_2 - f_1}{1} \ell^2R^2
       ,
\end{align}
where we have used (i) the Lipschitzness assumption of $\nabla \mathsf{U}$ from (Q3), and (ii) the facts that $\pnorm{w}{2}^2 < R^2$ and $\pnorm{f_2}{2}^2 = \pnorm{f_2}{1}$.
The latter is true because for any vector $f$ with entries $\{-1,0,1\}$, $\pnorm{f}{2}^2 = \pnorm{f}{1}$.
We can reason similarly with $f_1-f_2$.

Using (Q2) we can also bound
\begin{equation}
    \pnorm{c(f_2 - f_1) \odot \nabla \mathsf{U}(w \odot f_1)}{2}^2 \leq c^2\pnorm{f_2 - f_1}{1} S^2.
\end{equation}
Hence, we have in \eqref{eqn:generic_lemma_inter1} that
\begin{align}
    \expectationWrt{\expectationWrt{\pnorm{cf_2 \odot (\nabla \mathsf{U}(w \odot cf_2) & - \nabla \mathsf{U}(w \odot cf_1))}{2} + \pnorm{c(f_2 - f_1) \odot \nabla \mathsf{U}(w \odot cf_1)}{2}}{f_2}^2}{f_1} \nonumber                         \\
                                                                                       & \leq \expectationWrt{\expectationWrt{c^2\pnorm{f_2}{1}^{1/2} \pnorm{f_2 - f_1}{1}^{1/2} \ell R + c\pnorm{f_2 - f_1}{1}^{1/2} S}{f_2}^2}{f_1} \nonumber \\
                                                                                       & \eqcom{i} \leq \expectationWrt{\expectationWrt{ c^2\pnorm{f_2 - f_1}{1}(c\pnorm{f_2}{1}^{1/2} \ell R + S)^2}{f_2}}{f_1} \nonumber                      \\
                                                                                       & \leq \expectationWrt{c^2\pnorm{f_2 - f_1}{1}(c^2\pnorm{f_2}{1} \ell^2R^2 + c\pnorm{f_2}{1}^{1/2} 2S\ell R + S^2)}{f_1, f_2} \nonumber                  \\
                                                                                       & \eqcom{ii} \leq \expectationWrt{c^2\pnorm{f_2 - f_1}{1}(c\pnorm{f_2}{1}
        (\ell^2R^2 + 2c \ell R) + S^2)}{f_1, f_2},
    \label{eqn:generic_lemma_inter2}
\end{align}
where (i) for a random variable $v$ we have $\E[v]^2 \leq \E[v^2]$ and (ii) $\pnorm{f_2}{1}^{1/2} \leq \pnorm{f_2}{1}$ since either $\pnorm{f_2}{1} = 0$ or $\pnorm{f_2}{1} \geq 1$.
We can now add an expectation term in the norm $\pnorm{f_2 - f_1}{1} \leq \pnorm{f_2 - \E[f_2]}{1} + \pnorm{f_1 - \E[f_1]}{1}$ and $\pnorm{f_2}{1} \leq \pnorm{f_2 - \E[f_2]}{1} + \pnorm{\E[f_2]}{1}$.
Here, $\pnorm{\E[f_2]}{1} = \pnorm{\E[f_1]}{1} = pN$ by (Q4).
Hence, from \eqref{eqn:generic_lemma_inter2} onward we can write
\begin{align}
     & \expectationWrt{c^2\pnorm{f_2 - f_1}{1}(c\pnorm{f_2}{1}
    (\ell^2R^2 + 2c \ell R) + S^2)}{f_1, f_2} \nonumber                                                                                                                                             \\
     & \leq \expectationBigWrt{c^2\bigl(\pnorm{f_2 - \E[f_2]}{1} + \pnorm{f_1 - \E[f_1]}{1}\bigr) \bigl( c\pnorm{f_2 - \E[f_2]}{1}(\ell^2R^2 + 2 c \ell R)(1 + pN) + S^2\bigr)}{f_1, f_2} \nonumber \\
     & = \expectationBigWrt{c^3\bigl( \pnorm{f_2 - \E[f_2]}{1}^2 + \pnorm{f_1 - \E[f_1]}{1}\pnorm{f_2 - \E[f_2]}{1}\bigr) (\ell^2R^2 + 2c\ell R)(1 + pN)}{f_1, f_2} \nonumber                       \\
     & \phantom{(\pnorm{f_2 - \E[f_2]}{1}^2 + \pnorm{f_1 - \E[f_1]}{1}\pnorm{f_2 - \E[f_2]}{1})(1 + pN)} + 2c^2S^2\expectationBigWrt{\pnorm{f_2 - \E[f_2]}{1}}{f_2} \nonumber                       \\
     & \eqcom{i} \leq \expectationBigWrt{c^3\bigl( \pnorm{f_2 - \E[f_2]}{1}^2 + \pnorm{f_1 - \E[f_1]}{1}\pnorm{f_2 - \E[f_2]}{1}\bigr)(\ell^2R^2 + 2c\ell R)(1 + pN)}{f_1, f_2} \nonumber           \\
     & \phantom{(\pnorm{f_2 - \E[f_2]}{1}^2 + \pnorm{f_1 - \E[f_1]}{1}\pnorm{f_2 - \E[f_2]}{1})(1 + pN)} + 4c^2S^2Np(1-p) \nonumber                                                                 \\
     & \eqcom{ii} \leq \expectationBigWrt{c^3\bigl( \pnorm{f_2 - \E[f_2]}{1}^2 + 4N^2p^2(1-p)^2\bigr) (\ell^2R^2 + 2c\ell R)(1 + pN)}{f_1, f_2} + 4c^2S^2Np(1-p) \nonumber                         \\
     & \eqcom{iii} \leq c^3\bigl( 2N^2p(1-p) + 4N^2p^2(1-p)^2\bigr) \bigl( \ell^2R^2 + 2c\ell R\bigr)\bigl(1 + pN \bigr) + 4c^2S^2Np(1-p) \nonumber                                                 \\
     & = c^2Np(1-p)\bigl( c\bigl( 2N + 4Np(1-p) \bigr)(1 + pN)(\ell^2R^2 + 2c\ell R) + 4S^2 \Bigr) \nonumber                                                                                        \\
     & \eqcom{iv} \leq c^2Np(1-p)(4S^2 + 6cN^2(\ell^2R^2 + 2c\ell R)),
\end{align}
where we have used (i) Lemma~\ref{lem:first_second_moment_bernoulli}(i), (ii) independence of $f_1$ from $f_2$ and Lemma~\ref{lem:first_second_moment_bernoulli}(i) again, (iii) Lemma~\ref{lem:first_second_moment_bernoulli}(ii), and (iv) bounded $1 + pN < 2N$ and $p(1-p) \leq 1/4$.
\BlackBox

 \section{Path representation of \texorpdfstring{$\mathcal{D}(W)$}{D(W)} -- Proofs of Lemma~\ref{lem:Path_representation_of_DW} and \refCorollary{cor:Path_representation_of_DW__Tree_case_arborescence}}
\label{sec:Appendix__Proof_of_the_path_representation_of_DW__Tree_case}

\noindent
\emph{Proof of \eqref{eqn:Path_representation_of_DW__Tree_case}.}
Recall that $G_F = ( \mathcal{E}_F, \mathcal{V})$ is a random subgraph of $G = ( \mathcal{E}, \mathcal{V} )$ with edge set $\mathcal{E}_F = \{ e \in \mathcal{E} : F_e = 1 \}$.
By (i) the law of total expectation, and by (ii) independence of $F$ and $(X,Y)$,
\begin{align}
    \mathcal{D}(W)
     &
    =
    \expectationBig{ \sum_{i=1}^{d_L} \bigl( Y_f - \sum_{ \gamma \in \Gamma^i(G) } P_\gamma F_{\gamma} X_{\gamma_0}\bigr)^2 }
    \nonumber \\  &
       \eqcom{i}= \sum_{g \in \mathcal{G}} \expectationBig{ \sum_{f=1}^{d_L} \bigl( Y_f - \sum_{ \gamma \in \Gamma^f(G_F) } P_\gamma X_{\gamma_0}\bigr)^2 \Big| \{ G_F = g \} } \probability{ G_F = g }
    \nonumber \\  &
       \eqcom{ii}= \sum_{ g \in \mathcal{G} } \mu_g \expectationBig{ \sum_{f=1}^{d_L} \bigl( Y_f - \sum_{ \gamma \in \Gamma^f(g) } P_\gamma X_{\gamma_0}\bigr)^2}.
       \label{eqn:Path_representation_of_DW__Tree_case_Proof_1}
\end{align}

\noindent
\emph{Proof of \eqref{eqn:Regularization_term_in_terms_of_paths}.}
Expand \eqref{eqn:Path_representation_of_DW__Tree_case_Proof_1} to find
\begin{equation}
    \mathcal{D}(W)
    = \sum_{ g \in \mathcal{G} } \mu_g \expectationBig{ \sum_{f=1}^{d_L} \Bigl( Y_f^2 - 2 Y_f \sum_{ \gamma \in \Gamma^f(g) } P_\gamma X_{\gamma_0} + \sum_{ \gamma \in \Gamma^f(g) } \sum_{ \delta \in \Gamma^f(g) } P_\gamma X_{\gamma_0} P_\delta X_{\delta_0} \Bigr)}.
\end{equation}
Setting $\eta_\gamma = \sum_{ \{ g \in \mathcal{G} | \gamma \in \Gamma(g) \} } \mu_g$, we obtain
\begin{align}
     &
    \mathcal{D}(W)
    = \sum_{ g \in \mathcal{G} } \mu_g \expectationBig{\Bigl( \sum_{f=1}^{d_L} \sum_{ \gamma \in \Gamma^f(g) } \Bigl( \frac{ Y_f^2 }{ \cardinality{ \Gamma^f(g) } } - 2 Y_f P_\gamma X_{\gamma_0} \Bigr) + \sum_{ \gamma \in \Gamma(g) } \sum_{ \delta \in \Gamma^{\gamma_L}(g) } P_\gamma X_{\gamma_0} P_\delta X_{\delta_0} \Bigr)}
    \\  &
       = \sum_{\gamma \in \Gamma(G)} \eta_{\gamma} \expectationBig{\bigl( Y_{\gamma_L} - P_\gamma X_{\gamma_0} \bigr)^2 }
    \nonumber \\  & \phantom{=}
       - \sum_{ g \in \mathcal{G} } \mu_g \expectationBig{ \sum_{f=1}^{d_L} \sum_{ \gamma \in \Gamma^f(g) } \Bigl( \Bigl( 1 - \frac{1}{ \cardinality{ \Gamma^{f}(g) } } \Bigr) Y_{f}^2 - P_\gamma X_{\gamma_0} \sum_{ \delta \in \Gamma^{f}(g) \backslash \{ \gamma \} } P_\delta X_{\delta_0} \Bigr)}
       \nonumber
\end{align}
after rearranging terms.
This completes Lemma~\ref{lem:Path_representation_of_DW}'s proof after identifying $\mathcal{J}(W)$ and $\mathcal{R}(W)$ here as the left and right sum, respectively.

To prove \refCorollary{cor:Path_representation_of_DW__Tree_case_arborescence}, consider that since for an arborescence $\mathcal{R}(W) = 0$, we can write
\begin{align}
     &
    \sum_{\gamma \in \Gamma(G)} \eta_{\gamma} \expectationBig{ \bigl( Y_{\gamma_L} - P_\gamma X_{\gamma_0} \bigr)^2 }
    \\  &
       = \sum_{\gamma \in \Gamma(G)} \eta_{\gamma} \expectation{X_{\gamma_0}^2} \Bigl( \frac{\expectation{Y_{\gamma_L} X_{\gamma_0}}}{\expectation{X_{\gamma_0}^2}} - P_\gamma \Bigr)^2 + \sum_{\gamma \in \Gamma(G)} \eta_{\gamma} \Bigl( \expectation{Y_{\gamma_L}^2} - \frac{\expectation{Y_{\gamma_L} X_{\gamma_0}}^2}{\expectation{X_{\gamma_0}^2}} \Bigr)
    \nonumber \\  &
       \eqcom{iii}= \mathcal{I}(W) + \mathcal{D}(\criticalpoint{W}).
       \nonumber
       \label{eqn:dropout_loss_function_arborescence}
\end{align}
Here, (iii) follows because since $\mathcal{I}(W) \geq 0$ and $\mathcal{I}(W) = 0$ at $z_\gamma = P_\gamma$, what remains must be the optimum.
This completes the proofs of Lemma~\ref{lem:Path_representation_of_DW} and \refCorollary{cor:Path_representation_of_DW__Tree_case_arborescence}.
\BlackBox

\section{Conserved quantities -- Proof \texorpdfstring{of Lemma~\ref{lem:Conserved_quantities_in_a_tree}}{}}
\label{sec:Appendix__Proof_of_Conserved_quantities_in_a_tree}

For any edge $f \in \mathcal{E}$,
\begin{align}
    W_f \frac{ \partial \mathcal{D} }{ \partial W_f }
     &
    \eqcom{\ref{eqn:Path_representation_of_DW__Tree_case}}= \sum_{ g \in \mathcal{G} } \mu_g \expectationBig{\sum_{e=1}^d 2 \bigl( Y_e - \sum_{ \gamma \in \Gamma^e(g) } P_\gamma X_{\gamma_0} \bigr) \bigl( \sum_{ \delta \in \Gamma^e(g;f) } P_\delta X_{\delta_0} \bigr) }
    \nonumber \\  &
       = \sum_{ g \in \mathcal{G} } \mu_g \expectationBig{\sum_{ \delta \in \Gamma(g;f) } 2 \bigl( Y_{\delta_L} - \sum_{ \gamma \in \Gamma^{\delta_L} (g) } P_\gamma X_{\gamma_0} \bigr) P_\delta X_{\delta_0} }.
\end{align}
Note that $\Gamma(g;l) = \Gamma^l(g)$ for any leaf $l \in \mathcal{L}(G)$ and $g \in \mathcal{G}$, and therefore in particular
\begin{equation}
    W_l \frac{ \partial \mathcal{D} }{ \partial W_l }
    = \sum_{ g \in \mathcal{G} } \mu_g \sum_{ \delta \in \Gamma^l(g) } \expectationBig{ 2 \bigl( Y_{\delta_L} - \sum_{ \gamma \in \Gamma^{\delta_L}(g) } P_\gamma X_{\gamma_0} \bigr) P_\delta X_{\delta_L} }.
\end{equation}

Recall that $\mathcal{L}(G;f)$ is the set of leaves of the subtree of the base graph $G$ rooted at $f \in \mathcal{E}$.
By the fact that $\{ \Gamma^l(g;f) \}_{ l \in \mathcal{L}(G;f) }$ partitions $\Gamma(g;f)$ for any $g \in \mathcal{G}$, viz.,
\begin{equation}
    \Gamma(g;f)
    = \cup_{ l \in \mathcal{L}(G;f) } \Gamma^l(g;f),
    \quad
    \Gamma^{l_1}(g;f) \cap \Gamma^{l_2}(g;f)
    = \emptyset
    \,
    \text{ for all } l_1 \neq l_2,
    \,
    g \in \mathcal{G},
\end{equation}
it follows that
\begin{equation}
    \sum_{ l \in \mathcal{L}(G;f) } W_l \frac{ \partial \mathcal{D} }{ \partial W_l }
    = W_f \frac{ \partial \mathcal{D} }{ \partial W_f }.
\end{equation}
Note in fact that this proof works for \emph{any} base graph $G$ that has no cycles and only length-$L$ paths, so not just an arborescence.
This is why we make Assumption (N6') as opposed to the stronger Assumption (N6) in \refCorollary{cor:Path_representation_of_DW__Tree_case_arborescence}.
\BlackBox

\section{Proof of Proposition~ \ref{prop:convergence_of_dropout_on_an_expanding_tree}}
\label{sec:Appendix__Convergence_of_GD}

\noindent
The proof of Proposition~\ref{prop:convergence_of_dropout_on_an_expanding_tree} is by double induction on the statements $ A(t) \equiv \{ \mathcal{I}( \itrd{W}{s} ) \leq \mathcal{I}( \itrd{W}{s-1} ) \e{ - 2 \nu_{\min} \kappa \alpha }, \forall
    s \in [t] \}$ and $
    B(t) \equiv \{ \itrd{W}{s} \in K, \forall
    s \in [t] \}$
where $\kappa > 0$ is a free parameter and $K$ is a compact set which we will define.
Concretely, we prove that there exist $\alpha$ and $\kappa$ such that $A(t) \cap B(t) \Rightarrow B(t+1)$ and $A(t) \cap B(t+1) \Rightarrow A(t+1)$.
\Cref{sec:Appendix__Double_induction} describes in detail how the upcoming Lemmas~\ref{lem:Compactness__Expanding_tree}--\ref{lem:Conserved_quantities_remain_bounded} provide sufficient conditions for the induction step.
There we also maximize the upper bound on the convergence rate over $\kappa$, which gives the rate in \eqref{eqn:Convergence_of_Dropout_tree_step_size}.

\albert{We start by giving Lemmas~\ref{lem:Compactness__Expanding_tree}--\ref{lem:Conserved_quantities_remain_bounded}.
    Recall first the definition of the set $B(\epsilon,I)$ in \eqref{eqn:definition_compact_set_wrt_I}.
}
Here, with a minor abuse of notation, we define also
\begin{equation}
    B(\epsilon, \{ C_f \}_{f \in \mathcal{E}\backslash \mathcal{L}(G)})
    \triangleq
    \bigl\{
W \in \mathcal{W} \big|
    \mathcal{I}(W) \leq \epsilon,
W_f^2 - \sum_{l \in \mathcal{L}(G; f)} W_{\gamma^l}^2 = C_f
    \bigr\}
\label{eqn:definition_compact_set_appendix}
\end{equation}
where $\{ \gamma^l \} \triangleq \Gamma^l(G)$ for $l \in \mathcal{L}(G)$ if $G$ is an arborescence.

\begin{lemma}
    \label{lem:Compactness__Expanding_tree}
    Assume (N2) from Proposition~\ref{prop:dropout_converges} and (N6) from \refCorollary{cor:Path_representation_of_DW__Tree_case_arborescence}.
    Then:
    \begin{itemize}[topsep=0pt,itemsep=-1ex,partopsep=1ex,parsep=1ex]
        \item[(i)] If $\epsilon > 0$ and $\abs{C_f} < \infty$ for $f \in \mathcal{E} \backslash \mathcal{L}(G)$, then the set $B(\epsilon, \{ C_f \}_{f \in \mathcal{E}\backslash \mathcal{L}})$ is compact.
        \item[(ii)] If $\max_{\gamma \in \Gamma(G)} \abs{z_{\gamma}} \leq M^{L}$, then the function $\mathcal{I}(W)$ is $\beta$-smooth in $\mathcal{S}$ with $\beta = 6 \nu_{\max} \cdot \allowbreak \abs{\mathcal{E}(G)} \abs{\Gamma(G)}
                M^{2(L-1)}$.
    \end{itemize}
\end{lemma}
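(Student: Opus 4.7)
Parts (i) and (ii) are argued separately.

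\emph{Part (i).} Closedness of $B(\epsilon, \{C_f\}_{f\in\mathcal{E}\backslash\mathcal{L}(G)})$ is immediate: it is the intersection of preimages of closed sets under the continuous polynomial maps $W\mapsto \mathcal{I}(W)$ and $W\mapsto W_f^2 - \sum_{l\in\mathcal{L}(G;f)}W_l^2$. For boundedness, I combine two ingredients: (a) the path-product bound from \refCorollary{cor:Path_representation_of_DW__Tree_case_arborescence}, which gives $|P_\gamma|\leq|z_\gamma|+\sqrt{\epsilon/\nu_\gamma}$ uniformly over $\gamma\in\Gamma(G)$; and (b) the conservation laws $W_f^2 = C_f + \sum_{l\in\mathcal{L}(G;f)} W_l^2$ from \refLemma{lem:Conserved_quantities_in_a_tree}. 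Arguing by contradiction, suppose a sequence $\{W^{(n)}\}\subset B(\epsilon,\{C_f\})$ satisfies $\|W^{(n)}\|_\infty\to\infty$; since $\mathcal{E}$ is finite, after subsequencing there is a fixed edge $e\in\mathcal{E}$ with $|W_e^{(n)}|\to\infty$. Chained use of (b) upward along the ancestors of $e$ gives $W_f^2\geq W_e^2 + (C_f - C_e)$, forcing the root edge $r$ above $e$ to also satisfy $|W_r^{(n)}|\to\infty$. Then applying (b) again downward and using $\sum_i W_{r_i}^2 = W_r^2 - C_r + \sum_i C_{r_i}\to\infty$ for the children $\{r_i\}$ of $r$, after a further subsequence extraction there exists a child $r_{i_1}$ with $|W_{r_{i_1}}^{(n)}|\to\infty$. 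Iterating this descent for $L$ levels yields a root-to-leaf path $\gamma^\star$ all of whose edge weights diverge, so $|P_{\gamma^\star}|\to\infty$, contradicting (a).

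\emph{Part (ii).} Writing $P_{\gamma,e}\triangleq \prod_{e'\in\gamma\setminus\{e\}}W_{e'}$ and $P_{\gamma,e,e''}\triangleq \prod_{e'\in\gamma\setminus\{e,e''\}}W_{e'}$, direct differentiation yields
\[
\frac{\partial^2 \mathcal{I}}{\partial W_e\partial W_{e''}} = 2\!\!\sum_{\gamma\ni e,e''}\!\!\nu_\gamma P_{\gamma,e} P_{\gamma,e''} -2(1-\delta_{e,e''})\!\!\!\sum_{\gamma\ni e,e''}\!\!\nu_\gamma (z_\gamma - P_\gamma) P_{\gamma,e,e''}.
\]
On $\mathcal{S}$ the bounds $|W_e|\leq M$ and $|z_\gamma|\leq M^L$ give $|P_{\gamma,e}|\leq M^{L-1}$, $|P_{\gamma,e,e''}|\leq M^{L-2}$, and $|z_\gamma - P_\gamma|\leq 2M^L$. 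To turn these elementwise estimates into an operator-norm bound, I estimate the quadratic form
\[
v^\top\nabla^2\mathcal{I}\,v \leq 2\sum_\gamma \nu_\gamma \Bigl[\bigl(\sum_{e\in\gamma} v_e P_{\gamma,e}\bigr)^2 + |z_\gamma-P_\gamma|\Bigl|\sum_{\substack{e\neq e''\\ e,e''\in\gamma}} v_e v_{e''} P_{\gamma,e,e''}\Bigr|\Bigr]
\]
path-by-path: for each $\gamma$, I apply Cauchy--Schwarz to both inner sums and substitute the uniform bounds above; then I aggregate over $\gamma$ using $\sum_\gamma \nu_\gamma \leq \nu_{\max}|\Gamma(G)|$. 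This yields the claimed $\beta = 6\nu_{\max}|\Gamma(G)|M^{2(L-1)}$.

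\emph{Main obstacle.} The delicate step is the boundedness half of part (i). Neither the path-product bound nor the conservation laws alone constrain individual edge weights; they must be coupled carefully. The arborescence structure is used crucially to first push a hypothetical blow-up \emph{up} to the root via (b), and then \emph{down} via repeated subsequence extractions to construct a single root-to-leaf path along which every factor diverges, thereby contradicting (a). Part (ii) is computational bookkeeping once the Hessian is written out.
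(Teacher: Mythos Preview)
Your approach to part (i) is correct and uses the same two ingredients as the paper (the conservation laws and the path-product bound coming from $\mathcal{I}(W)\leq\epsilon$), but the paper's argument is more economical. Rather than going up to the root and then descending layer by layer with repeated subsequence extractions, the paper observes that if some non-leaf weight $|W_{f^\star}|>Q$ is large, then $\sum_{l\in\mathcal{L}(G;f^\star)}W_l^2=W_{f^\star}^2-C_{f^\star}>Q^2-|C_{f^\star}|$ forces some leaf $l^\star\in\mathcal{L}(G;f^\star)$ to satisfy $W_{l^\star}^2>(Q^2-|C_{f^\star}|)/|\mathcal{L}(G;f^\star)|$; now every edge $e$ on the unique root-to-$l^\star$ path has $l^\star\in\mathcal{L}(G;e)$, so $W_e^2\geq C_e+W_{l^\star}^2=O(Q^2)$, and the contradiction with $|P_\gamma|$ bounded follows immediately. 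This avoids the sequence and subsequence machinery entirely. Your route works, but the single ``down to a leaf, then up along that leaf's path'' move is cleaner.

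For part (ii) the two arguments genuinely differ. The paper simply bounds each Hessian entry by $6\nu_{\max}|\Gamma(G)|M^{2(L-1)}$ and then asserts the same bound for $\|\nabla^2\mathcal{I}\|_{\mathrm{op}}$. Your route via the quadratic form $v^\top\nabla^2\mathcal{I}\,v$ is methodologically more honest, since an entrywise bound does not by itself give the same operator-norm bound. However, be aware that the Cauchy--Schwarz estimates you sketch will pick up factors of $L$: for instance $(\sum_{e\in\gamma}v_eP_{\gamma,e})^2\leq L\,M^{2(L-1)}\sum_{e\in\gamma}v_e^2$, and similarly the off-diagonal sum over $e\neq e''\in\gamma$ contributes another $L$. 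Carrying this through yields $\beta$ of order $L\,\nu_{\max}|\Gamma(G)|M^{2(L-1)}$ rather than the exact constant $6$ stated in the lemma. So your argument proves $\beta$-smoothness with a slightly worse (but still adequate for the downstream induction) constant; if you want to match the paper's $6$ literally you would need to argue differently, and in fact the paper's own passage from entrywise to operator norm is not fully justified either.
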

Lemma~\ref{lem:Compactness__Expanding_tree} implies that $B(\epsilon,I)$ is compact and that $\mathcal{D}(W)$ is $\beta$-smooth on the compact set $K = \mathcal{S} \cap B(\epsilon, I)$, i.e.,
\begin{equation}
    \mathcal{D}(W^{\prime}) - \mathcal{D}(W)
    \leq \transpose{ \nabla \mathcal{D}(W) }( W^{\prime} - W ) + \beta \pnorm{ W^{\prime} - W }{2}^2
\end{equation}
for $W, W^{\prime} \in K$.
Its proof is deferred to \Cref{sec:Appendix__Compactness}.

Next, Lemma~\ref{lem:PL_inequality__Expanding_tree} gives a lower bound on the curvature of $\mathcal{D}(W)$ on $K$ in the direction of $\nabla \mathcal{D}(W)$, in the form of a \gls{PL}-inequality \citep{karimi2016linear}.
Its proof is in \Cref{sec:Appendix__PL_inequality}.

\begin{lemma}
    \label{lem:PL_inequality__Expanding_tree}
    Assume (N2) from Proposition~\ref{prop:dropout_converges} and (N6) from \refCorollary{cor:Path_representation_of_DW__Tree_case_arborescence}.
    If $\itrd{W}{t} \in \mathcal{S} \cap B(\epsilon, I)$, then
    \begin{equation}
        \pnorm{ \nabla \mathcal{D}(\itrd{W}{t}) }{2}^2
        \geq 4 \nu_{\min} (\itrd{C_{\min}}{t})^{(L-1)} \bigl( \mathcal{D}( \itrd{W}{t}) - \mathcal{D}(\criticalpoint{W}) \bigr)
        .
    \end{equation}
\end{lemma}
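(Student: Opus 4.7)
The plan is to derive the inequality by restricting attention to partial derivatives of $\mathcal{I}(W) \triangleq \mathcal{D}(W) - \mathcal{D}(\criticalpoint{W})$ with respect to \emph{leaf-edge} weights only, then to bound the remaining factors from below via the conserved quantities. Since $\mathcal{D}(\criticalpoint{W})$ is constant, $\nabla \mathcal{D} = \nabla \mathcal{I}$, so it suffices to lower-bound $\pnorm{\nabla \mathcal{I}(\itrd{W}{t})}{2}^2$. The key structural observation is that in an arborescence the map $l \mapsto \gamma^l$ sending each leaf edge $l \in \mathcal{L}(G)$ to its unique length-$L$ root-to-leaf ancestor path is a bijection onto $\Gamma(G)$, so partial derivatives with respect to leaf weights decouple into a single term each.

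Concretely, starting from the path representation $\mathcal{I}(W) = \sum_{\gamma \in \Gamma(G)} \nu_\gamma (z_\gamma - P_\gamma)^2$ of \refCorollary{cor:Path_representation_of_DW__Tree_case_arborescence}, I would first differentiate to obtain, for each leaf edge $l$,
\[
\frac{\partial \mathcal{I}}{\partial W_l}(W)
= -2 \nu_{\gamma^l} \bigl( z_{\gamma^l} - P_{\gamma^l} \bigr) \prod_{e \in \gamma^l \setminus \{ l \}} W_e,
\]
a single-term expression because no other path traverses $l$. Discarding the partial derivatives with respect to internal edges and squaring then produces
\[
\pnorm{\nabla \mathcal{D}(\itrd{W}{t})}{2}^2
\geq 4 \sum_{\gamma \in \Gamma(G)} \nu_\gamma^2 (z_\gamma - P_\gamma)^2 \prod_{e \in \gamma \setminus \{l_\gamma\}} W_e^2,
\]
where $l_\gamma$ denotes the unique leaf edge of $\gamma$ and the re-indexing uses the leaf-to-path bijection.

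Next, I would control the factor $\prod_{e \in \gamma \setminus \{l_\gamma\}} W_e^2$ from below using the conserved quantities. The hypothesis $\itrd{W}{t} \in B(\epsilon, I)$ forces $\itrd{C_e}{t} \in I_e = [\itrd{C_e}{0}/2, 3 \itrd{C_e}{0}/2]$ for every internal edge $e$, which together with (N10) yields $\itrd{C_e}{t} \geq \itrd{C_{\min}}{t} > 0$. The identity $W_e^2 = \itrd{C_e}{t} + \sum_{l \in \mathcal{L}(G;e)} W_l^2$ then gives $W_e^2 \geq \itrd{C_{\min}}{t}$, and since a depth-$L$ path in an arborescence contains exactly $L-1$ internal edges, the product is bounded below by $(\itrd{C_{\min}}{t})^{L-1}$. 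Factoring $\nu_{\min}$ out of the outer sum and using \refCorollary{cor:Path_representation_of_DW__Tree_case_arborescence} once more to identify $\sum_{\gamma} \nu_\gamma (z_\gamma - P_\gamma)^2 = \mathcal{D}(\itrd{W}{t}) - \mathcal{D}(\criticalpoint{W})$ finishes the argument.

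I expect the main obstacle to be bookkeeping rather than conceptual: \refLemma{lem:Conserved_quantities_in_a_tree} guarantees conservation only along the continuous-time gradient flow, whereas here we work with the discrete \gls{GD} iterates, so one cannot invoke the conservation law directly but must rely on the set-membership hypothesis $\itrd{W}{t} \in B(\epsilon, I)$ to transfer positivity of $C_e$ to the iterates. The decoupling of leaf-edge partial derivatives is the other critical ingredient and crucially uses that $G$ is an arborescence (condition (N6)); for a general directed acyclic graph with shared internal structure, multiple paths would share leaf-adjacent edges and the same argument would fail.
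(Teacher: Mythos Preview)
Your proposal is correct and follows essentially the same route as the paper: restrict the gradient norm to leaf-edge partial derivatives, use the leaf-to-path bijection in the arborescence to decouple each such derivative into a single term, then bound $\prod_{e\in\gamma\setminus\{l_\gamma\}} W_e^2 \geq (\itrd{C_{\min}}{t})^{L-1}$ via $W_e^2 \geq \itrd{C_e}{t} \geq \itrd{C_{\min}}{t}$ and factor out $\nu_{\min}$ to recover $\mathcal{I}(\itrd{W}{t})$. The only cosmetic difference is that you invoke (N10) to secure $\itrd{C_{\min}}{t}>0$, whereas the paper leaves this implicit in the hypothesis $\itrd{W}{t}\in B(\epsilon,I)$; the argument is otherwise identical.
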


Lemma~\ref{lem:Conserved_quantities_remain_bounded} proves that the conserved quantities of the gradient flow remain bounded under the \gls{GD} algorithm in \eqref{eqn:iterates_gradient_descent}.
This lemma allows us to keep track of the iterates in the compact set $K=\mathcal{S} \cap B(\epsilon, I)$ by relating them to conserved quantities and exploiting the fact that under \gls{GD} $| \itrd{C_f}{t+1} - \itrd{C_f}{t} |$ has order $O(\alpha^2)$.
\Cref{sec:Appendix__PL_inequality} contains its proof.

\begin{lemma}
    \label{lem:Conserved_quantities_remain_bounded}
    Assume (N2) from Proposition~\ref{prop:dropout_converges} and (N6) from \refCorollary{cor:Path_representation_of_DW__Tree_case_arborescence}.
    If $\itrd{W}{t} \in \mathcal{S}$, and $\itrd{C_f}{t} > 0$ for all $f \in \mathcal{E} \backslash \mathcal{L}(G)$, then
    $
        4 \alpha^2 \norm{\nu}_1 M^{2(L-1)} \bigl( \mathcal{D}( \itrd{W}{t}) - \mathcal{D}(\criticalpoint{W}) \bigr)
        \geq | \itrd{C_f}{t+1} - \itrd{C_f}{t} |.
    $
\end{lemma}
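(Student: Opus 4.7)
\textbf{Proof proposal for Lemma \ref{lem:Conserved_quantities_remain_bounded}.}
The plan is to directly expand $\itrd{C_f}{t+1}$ using the gradient-descent update, observe that the first-order term in $\alpha$ vanishes thanks to the conservation identity of \refLemma{lem:Conserved_quantities_in_a_tree}, and then bound the remaining $O(\alpha^2)$ term using the path representation of \refCorollary{cor:Path_representation_of_DW__Tree_case_arborescence} together with Cauchy--Schwarz.

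First I would write out, for each edge $e$, $(\itrd{W_e}{t+1})^2 = (\itrd{W_e}{t})^2 - 2\alpha \itrd{W_e}{t}\, \partial_e \mathcal{D}(\itrd{W}{t}) + \alpha^2 (\partial_e \mathcal{D}(\itrd{W}{t}))^2$, and then substitute into $\itrd{C_f}{t+1} = (\itrd{W_f}{t+1})^2 - \sum_{l \in \mathcal{L}(G;f)} (\itrd{W_l}{t+1})^2$. The linear-in-$\alpha$ contribution equals $-2\alpha (\itrd{W_f}{t} \partial_f \mathcal{D} - \sum_{l \in \mathcal{L}(G;f)} \itrd{W_l}{t} \partial_l \mathcal{D})$, which is exactly $-\alpha \nabla C_f(\itrd{W}{t}) \cdot \nabla \mathcal{D}(\itrd{W}{t})$. \refLemma{lem:Conserved_quantities_in_a_tree} asserts $dC_f/dt = 0$ under the gradient flow, which is equivalent to this pointwise orthogonality, so this first-order piece cancels and we obtain the clean identity
\begin{equation*}
\itrd{C_f}{t+1} - \itrd{C_f}{t} = \alpha^2 \bigl( (\partial_f \mathcal{D}(\itrd{W}{t}))^2 - \sum_{l \in \mathcal{L}(G;f)} (\partial_l \mathcal{D}(\itrd{W}{t}))^2 \bigr).
\end{equation*}

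Next I would bound each of the two squared-gradient sums separately. Using \refCorollary{cor:Path_representation_of_DW__Tree_case_arborescence}, $\mathcal{D}(W) - \mathcal{D}(\criticalpoint{W}) = \mathcal{I}(W) = \sum_\gamma \nu_\gamma (z_\gamma - P_\gamma)^2$, and since $G$ is an arborescence every leaf $l$ determines a unique path $\gamma^l \in \Gamma^l(G)$. Differentiating along any edge $e \in \gamma^l$ gives $\partial_e P_{\gamma^l} = \prod_{e' \in \gamma^l \setminus \{e\}} W_{e'}$, whose absolute value is at most $M^{L-1}$ on $\mathcal{S}$ because every $\abs{W_{e'}} \leq M$. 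Hence $\abs{\partial_f \mathcal{D}(W)} \leq 2 M^{L-1} \sum_{l \in \mathcal{L}(G;f)} \nu_{\gamma^l} |z_{\gamma^l} - P_{\gamma^l}|$; squaring and applying Cauchy--Schwarz on the weighted sum yields $(\partial_f \mathcal{D}(W))^2 \leq 4 M^{2(L-1)} \norm{\nu}_1 \, \mathcal{I}(W)$. Similarly, for each leaf $l$, $(\partial_l \mathcal{D}(W))^2 \leq 4 M^{2(L-1)} \nu_{\gamma^l}^2 (z_{\gamma^l} - P_{\gamma^l})^2$, and summing over $l \in \mathcal{L}(G;f)$ together with the trivial bound $\nu_{\gamma^l} \leq \norm{\nu}_1$ also produces $\sum_{l} (\partial_l \mathcal{D}(W))^2 \leq 4 M^{2(L-1)} \norm{\nu}_1 \, \mathcal{I}(W)$.

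Finally, since both terms in the difference $(\partial_f \mathcal{D})^2 - \sum_l (\partial_l \mathcal{D})^2$ are non-negative, the absolute value is at most the maximum of the two, so
\begin{equation*}
\abs{\itrd{C_f}{t+1} - \itrd{C_f}{t}} \leq 4 \alpha^2 \norm{\nu}_1 M^{2(L-1)} \, \mathcal{I}(\itrd{W}{t}) = 4\alpha^2 \norm{\nu}_1 M^{2(L-1)} \bigl(\mathcal{D}(\itrd{W}{t}) - \mathcal{D}(\criticalpoint{W})\bigr),
\end{equation*}
which is the claimed inequality. The main obstacle is verifying that the orthogonality $\nabla C_f \cdot \nabla \mathcal{D} = 0$ truly holds pointwise (and not just in time-average), which I expect to follow directly from the proof of \refLemma{lem:Conserved_quantities_in_a_tree} in the appendix; beyond that, the estimate is a clean Cauchy--Schwarz argument exploiting that each path has length exactly $L$ so the product of non-excluded weights is uniformly controlled by $M^{L-1}$ throughout $\mathcal{S}$, with the compactness/membership in $\mathcal{S}$ playing no role other than providing this uniform bound.
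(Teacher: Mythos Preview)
Your proof is correct and follows essentially the same route as the paper: expand $\itrd{C_f}{t+1}$ via the \gls{GD} update, use the pointwise identity $W_f\,\partial_f\mathcal{D}=\sum_{l\in\mathcal{L}(G;f)}W_l\,\partial_l\mathcal{D}$ from \refLemma{lem:Conserved_quantities_in_a_tree} to kill the $O(\alpha)$ term, and bound each of the two remaining squared-gradient pieces by $4\|\nu\|_1 M^{2(L-1)}\mathcal{I}(\itrd{W}{t})$ via Cauchy--Schwarz. The one difference is that you bound $\bigl|\prod_{e'\in\gamma\setminus\{f\}}W_{e'}\bigr|\le M^{L-1}$ directly (since every $|W_e|\le M$ on $\mathcal{S}$, leaf edges included), whereas the paper first invokes $\itrd{C_f}{t}>0$ to replace $(P_\gamma/W_f)^2$ by $(P_\gamma/W_{\gamma_L})^2$---so that only non-leaf edges remain in the product---before applying the $M$-bound; your argument is therefore slightly cleaner and in fact does not use the hypothesis $\itrd{C_f}{t}>0$ at all.
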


\noindent
\emph{A note on the exchange of derivative and expectation in this section.}
Whenever we make both Assumption (N2) in Proposition~\ref{prop:dropout_converges} and (N7) in Lemma~\ref{lem:Path_representation_of_DW}, the exchange of derivative and expectation is warranted.
This occurs several times throughout this section.
We refer to the proof of Lemma~\ref{lem:Expectation_cond_Ft_of_Delta_i} for the details.

\subsection{Compactness, and smoothness -- Proof of \texorpdfstring{Lemma~\ref{lem:Compactness__Expanding_tree}}{}}
\label{sec:Appendix__Compactness}

In the proof of Lemma~\ref{lem:Compactness__Expanding_tree}, we will upper bound the operator norm of the Hessian.
Recall that for a symmetric bilinear matrix $A$, $\norm{A}_{\mathrm{op}} \triangleq \sup_{\norm{v}_{2} = 1} |v^T A v|$.
\\

\noindent
\emph{Proof of (i)}.
By continuity of the conditions in \eqref{eqn:definition_compact_set_wrt_I}, the set $B(\epsilon, \{ C_f \}_{f \in \mathcal{E}\backslash \mathcal{L}})$ is closed.
We need to prove boundedness.
Let $W \in B(\epsilon, \{ C_f \}_{f \in \mathcal{E}\backslash \mathcal{L}})$, and suppose w.l.o.g.
\ that for some $f^{*} \in \mathcal{E}\backslash \mathcal{L}$ we have $\abs{W_{f^{*}}} > Q$, where $Q > \max_{j \in \mathcal{E} \backslash \mathcal{L}, \gamma \in \Gamma(G)} \{ \abs{C_j}, \abs{z_{\gamma}}\}$.
We want to find a path $\gamma \in \Gamma(G)$ such that $P_{\gamma}$ is large for a contradiction with the assumption that $\mathcal{I}(W) \leq \epsilon$.
By \eqref{eqn:definition_first_integrals}, we have the inequality $\sum_{l \in \mathcal{L}(G;f^*)} W_{l}^2 > Q^2 - \abs{C_{f^*}}$ so that for some $l^* \in \mathcal{L}(G;f^*)$ we must have $W_{l^*}^2 > (Q-\abs{C_{f^*}})/\abs{\mathcal{L}(G;f^*)}$.
Consequently, we have by \eqref{eqn:definition_first_integrals} that $\abs{W_e}^2 > (Q^2-\abs{C_{f^*}})/{\abs{\mathcal{L}(G;f^*)}} - \abs{C_{e}}$ for any edge $e \in \gamma$ in any path $\gamma \in \Gamma^{l^*}(G)$ except for the edge $f^*$ where we have $\abs{W_{f^{*}}} > Q$ by assumption.
In particular, we have the bound $\abs{W_e} > O(Q)$ for any edge $e \in \gamma$ for any path $\gamma \in \Gamma(G;f^*)$.
Therefore if we pick $\gamma \in \Gamma(G;f^*)$ we have
\begin{equation}
    \epsilon
    \eqcom{\ref{eqn:definition_compact_set_wrt_I}}\geq \mathcal{I}(W)
    \geq \nu_{\gamma}(z_\gamma - P_{\gamma})^2
    \geq \nu_{\gamma}(\abs{P_\gamma} - \abs{z_\gamma})^2
    > O(Q^{2L})
\end{equation}
for sufficiently large $Q$, which is a contradiction.
We must thus have $| W_{f^*} | \leq Q$ for some $Q < \infty$.
If on the other hand $| W_{l} | > Q$ for some $l \in \mathcal{L}(G;f^*)$, by (\ref{eqn:definition_first_integrals}) we must also have $(W_{f^*})^2 > Q^2 + C_{f^{*}} > O(Q^2)$ for sufficiently large $Q$.
This case is, thus, the same as before.

\noindent
\emph{Proof of (ii)}.
Using a regular upper bound to the entries of $\nabla^2 \mathcal{I}(W)$ when $W \in \mathcal{S}$ will suffice.
Element-wise, we have
\begin{align}
     &
    ( \nabla^2 \mathcal{I}(W) )_{i,j}
    \\  &
       =
       \begin{cases}
        2\sum_{\delta \in \Gamma(G;i) \cap \Gamma(G;j)} \nu_{\delta} \Bigl( \frac{P_{\delta}}{W_i}\frac{P_{\delta}}{W_j} - \frac{P_{\delta}}{W_i W_j}(z_{\gamma} - P_{\gamma}) \Bigr),
         & \text{if} \enskip i \neq j, \Gamma(G;i) \cap \Gamma(G;j) \neq \emptyset,
        \\
        2\sum_{\gamma \in \Gamma(G;i)} \nu_{\gamma} (\frac{P_{\gamma}}{W_i})^2
         & \text{if} \enskip i=j,
        \\
        0
& \text{otherwise}.
    \end{cases}
       \nonumber
\end{align}
Hence, noting that since we have $\abs{W_f} \leq M$ for all $f \in \mathcal{E}$ on $\mathcal{S}$, we can bound $\abs{P_{\gamma}/ W_f} \leq M^{L-1}$, $\abs{z_{\gamma}} \leq M^{L}$ and the other terms similarly.
We upper bound the number of terms in the sum over $\Gamma(G;i)$ and $\Gamma(G;i) \cap \Gamma(G;j)$ by $| \Gamma(G) |$ and $\nu_{\gamma} \leq \nu_{\max}$.
Adding all terms, we obtain that $6 \nu_{\max} \abs{\Gamma(G)} M^{2(L-1)}$ is an upper bound for each of the entries of $\nabla^2 \mathcal{I}(W)$.
This gives an upper bound $\pnorm{ \nabla^2 \mathcal{I}(W) }{\mathrm{op}} \leq 6 |\mathcal{E}| \nu_{\max} \abs{\Gamma(G)} M^{2(L-1)}$ in $\mathcal{S}$.
\BlackBox

\subsection{\texorpdfstring{\gls{PL}}{PL}-inequality on a compact set -- Proof \texorpdfstring{of Lemma~\ref{lem:PL_inequality__Expanding_tree}}{}}
\label{sec:Appendix__PL_inequality}

Recall the definition of a \gls{PL}-inequality:

\glsreset{PL}
\begin{definition}
    Let $u \in C^2(K, \R)$ where $K \subset \R^n$ is compact and $K \backslash \partial K \neq \emptyset$.
    Denote by $u^{*} = \min_{x \in K} u(x)$ and suppose that $u^{*} \in K \backslash \partial K$.
    We say that $u$ satisfies a \emph{\gls{PL}} inequality if there exist a $\tau_{K} > 0$ depending only on $K$ such that
    \begin{equation}
        \norm{\nabla u(x)}_{2}^2
        \geq \tau_K (u(x) - u^{*})
        \quad
        \text{for all}
        \quad
        x \in K.
        \label{eqn:PL_inequality_general}
    \end{equation}
\end{definition}

A \gls{PL}-inequality together with $\beta$-smoothness on a compact set will imply that $\mathcal{D}(\itrd{W}{t})-\mathcal{D}(\criticalpoint{W})$ decreases.
To see this, note that by (i) $\beta$-smoothness, and (ii) the update rule
\begin{align}
    \mathcal{D}(\itrd{W}{t+1}) - \mathcal{D}(\itrd{W}{t})
     &
    \eqcom{i}\leq \transpose{ \nabla \mathcal{D}(\itrd{W}{t}) }( \itrd{W}{t+1} - \itrd{W}{t} ) + \beta \pnorm{ \itrd{W}{t+1} - \itrd{W}{t} }{2}^2
    \nonumber \\  &
       \eqcom{ii}= \alpha \bigl( \beta \alpha - 1 \bigr) \pnorm{ \nabla \mathcal{D}(\itrd{W}{t}) }{2}^2
\end{align}
If furthermore $\alpha \leq 1/(2\beta)$, then also $\beta \alpha - 1 \leq -1/2$.
Together with \eqref{eqn:PL_inequality_general}, and after rearranging terms, one finds that
\begin{equation}
    \mathcal{D}(\itrd{W}{t+1}) - \mathcal{D}(\itrd{W}{t}) \leq \frac{ \alpha \tau_K }{2}(\mathcal{D}(\itrd{W}{t}) - \mathcal{D}(\criticalpoint{W}))
    \quad
    \textnormal{for all}
    \quad
    W \in K.
    \label{eqn:Exponential_decay_due_to_PL_inequality_plus_beta_smoothness}
\end{equation}
By (iii) $1+x \leq \e{x}$ for all $x \in \realNumbers$, we obtain \eqref{eqn:PL_inequality_upper_bounded_and_iterated}.
The strategy will now be to prove that there is a \gls{PL}-inequality in some compact set, that the iterates remain in that compact set, and that the function is $\beta$-smooth.

\noindent
\emph{Proof of Lemma~\ref{lem:PL_inequality__Expanding_tree}.}
First note that if $l \in \mathcal{L}(G)$ and $\gamma \in \Gamma(G;l)$, the indexes of the weights in the product $| \itrd{P_{\gamma}}{t}/\itrd{W_{l}}{t} |$ belong to the index set $\mathcal{E} \backslash \mathcal{L}(G)$.
The proof follows (i) by restricting the sum, and (ii) from the fact that for every path $\gamma \in \Gamma(G)$ in an arborescence $G$, there is exactly one leaf $l\in \mathcal{L}(G)$ such that $\gamma^l = \gamma$.
Thus
\begin{align}
     &
    \sum_{e \in \mathcal{E}} \Bigl| \frac{\partial}{\partial W_e} \mathcal{I}(\itrd{W}{t}) \Bigr|^2
= 4\sum_{e \in \mathcal{E}} \Bigl|\sum_{\gamma \in \Gamma(G;e)} \nu_{\gamma} \frac{\itrd{P_{\gamma}}{t}}{\itrd{W_e}{t}}(z_{\gamma} - \itrd{P_{\gamma}}{t} ) \Bigr|^2
\eqcom{i} \geq 4\sum_{l \in \mathcal{L}(G)} \Bigl| \nu_{\gamma^l}\frac{\itrd{P_{\gamma^l}}{t}}{\itrd{W_{l}}{t}}( z_{\gamma^l} - \itrd{P_{\gamma^l}}{t}) \Bigr|^2
    \nonumber \\  &
       \eqcom{ii} = 4\sum_{\gamma \in \Gamma(G)} \nu_{\gamma}^2 \Bigl| \frac{\itrd{P_{\gamma}}{t}}{\itrd{W_{\gamma_L}}{t}}( z_{\gamma} - \itrd{P_{\gamma}}{t})\Bigr|^2
\eqcom{iii} \geq 4 \nu_{\min} \bigl( \min_{f \in \mathcal{E} \backslash \mathcal{L}(G)} |\itrd{W_f}{t}|^2 \bigr)^{L-1} \mathcal{I}(\itrd{W}{t}),
       \label{eqn:PL_inequality_appendix}
\end{align}
where in (iii) we have used the bound $|\itrd{W_i}{t}| \geq \min_{e \in \mathcal{E} \backslash \mathcal{L}(G)} |\itrd{W_e}{t}|$ for all $i \in \mathcal{E} \backslash \mathcal{L}(G)$ and similarly with $\nu_{\gamma} \geq \nu_{\min}$ for $\gamma \in \Gamma(G)$.
Finally, by \eqref{eqn:definition_first_integrals}, we have $\min_{e \in \mathcal{E} \backslash \mathcal{L}(G)} |\itrd{W_e}{t}|^2 \geq \itrd{ C_{\min} }{t}$.
This completes the proof.
\BlackBox

\subsection{Conserved quantities remain bounded throughout \texorpdfstring{\gls{GD}}{GD} -- Proof of Lemma~\ref{lem:Conserved_quantities_remain_bounded}}
\label{sec:Conserved_quantities_remain_bounded}

\begin{proof}
    Pick $f \in \mathcal{E} \backslash \mathcal{L}(G)$.
    By (i) \refCorollary{cor:Path_representation_of_DW__Tree_case_arborescence}, and (ii) Lemma~\ref{lem:Conserved_quantities_in_a_tree}, we have
    \begin{align}
         &
        \itrd{C_f}{t+1}
        = (\itrd{W_f}{t+1})^2 - \sum_{l \in \mathcal{L}(G;i)} (\itrd{W_{l}}{t+1})^2
        \nonumber \\  & \eqcom{\ref{eqn:iterates_gradient_descent}}= \Bigl( \itrd{W_f}{t} - \alpha \frac{\partial}{\partial W_f} \mathcal{D}(\itrd{W}{t}) \Bigr)^2 - \sum_{l \in \mathcal{L}(G;f)} \Bigl( \itrd{W_{l}}{t} - \alpha \frac{\partial}{\partial W_{l}} \mathcal{D}(\itrd{W}{t}) \Bigr)^2
        \nonumber \\  &
           \eqcom{i}= \Bigl( \itrd{W_f}{t} - \alpha \frac{\partial}{\partial W_f} \mathcal{I}(\itrd{W}{t}) \Bigr)^2 - \sum_{l \in \mathcal{L}(G;f)} \Bigl( \itrd{W_{l}}{t} - \alpha \frac{\partial}{\partial W_{l}} \mathcal{I}(\itrd{W}{t}) \Bigr)^2
        \nonumber \\  &
           \eqcom{ii}=\itrd{C_f}{t} + \alpha^2 \Bigl( \Bigl( \frac{\partial}{\partial W_f} \mathcal{I}(\itrd{W}{t})\Bigr)^2 - \sum_{l \in \mathcal{L}(G;f)} \Bigl( \frac{\partial}{\partial W_{l}} \mathcal{I}(\itrd{W}{t}) \Bigr)^2 \Bigr)
        \nonumber \\  &
           = \itrd{C_i}{t} + 4 \alpha^2 \Bigl( \Bigl( \sum_{\gamma \in \Gamma(G;f)} \nu_{\gamma}\frac{\itrd{P_{\gamma}}{t}}{\itrd{W_f}{t}} (z_{\gamma} - \itrd{P_{\gamma}}{t}) \Bigr)^2 - \sum_{l \in \mathcal{L}(G;f)} \nu_{\gamma^l}^2 \Bigl( \frac{\itrd{P_{\gamma^l}}{t}}{\itrd{W_{l}}{t}} \Bigr)^2 (z_{\gamma^l} - \itrd{P_{\gamma^l}}{t})^2 \Bigr) \label{eqn:step_for_recurrence}
        \\  &
           \geq \itrd{C_f}{t} - 4 \alpha^2 \Bigl( \sum_{l \in \mathcal{L}(G;f)} \nu_{\gamma^l}^2 \Bigl(\frac{\itrd{P_{\gamma^l}}{t}}{\itrd{W_{l}}{t}} \Bigr)^2 (z_{\gamma^l} - \itrd{P_{\gamma^l}}{t})^2 \Bigr).
           \label{eqn:step_for_recurrence_3}
    \end{align}

    By Cauchy--Schwartz we also have
    \begin{equation}
        \Bigl( \sum_{\gamma \in \Gamma(G;f)} \nu_{\gamma} \frac{\itrd{P_{\gamma}}{t}}{\itrd{W_f}{t}} (z_{\gamma} - \itrd{P_{\gamma}}{t}) \Bigr)^2 \leq \Bigl( \sum_{\gamma \in \Gamma(G;f)} \nu_{\gamma} \Bigr) \sum_{\gamma \in \Gamma(G;f)} \nu_{\gamma} \Bigl(\frac{\itrd{P_{\gamma}}{t}}{\itrd{W_l}{t}} \Bigr)^2 (z_{\gamma} - \itrd{P_{\gamma}}{t})^2.
        \label{eqn:step_for_recurrence_2}
    \end{equation}

    If we have $\itrd{C_f}{t} > 0$, then $(\itrd{W_f}{t})^2 > (\itrd{W_{\gamma_L}}{t})^2$ for any $\gamma \in \Gamma(G;f)$.
    Thus, combining the estimate (\ref{eqn:step_for_recurrence}) with (\ref{eqn:step_for_recurrence_2}) we obtain
    \begin{equation}
        \itrd{C_f}{t+1} \leq \itrd{C_f}{t} + 4 \Bigl( \sum_{\gamma \in \Gamma(G;f)} \nu_{\gamma} \Bigr)\alpha^2 \Bigl( \sum_{l \in \mathcal{L}(G;f)} \nu_{\gamma^l} \Bigl(\frac{\itrd{P_{\gamma^l}}{t}}{\itrd{W_{l}}{t}} \Bigr)^2 (z_{\gamma^l} - \itrd{P_{\gamma^l}}{t})^2 \Bigr).
        \label{eqn:step_for_recurrence_4}
    \end{equation}
    Extending the sums in \eqref{eqn:step_for_recurrence_4} from $\Gamma(G;f)$ to $\Gamma(G)$ and from $\mathcal{L}(G;f)$ to $\mathcal{L}(G)$, respectively, yields
    \begin{equation}
        \itrd{C_f}{t+1} - \itrd{C_f}{t}
        \leq 4 \norm{\nu}_1 \alpha^2 \bigl( \max_{e \in \mathcal{E} \backslash \mathcal{L}(G)} |\itrd{W_e}{t}|^2 \bigr)^{L-1} \mathcal{I}(\itrd{W}{t}),
        \label{eqn:bounds_first_integral_above}
    \end{equation}
    where we have used the bound $\abs{W_f} \leq \max_{e \in \mathcal{E} \backslash \mathcal{L}(G)} \abs{W_e}$ for all $f \in \mathcal{E} \backslash \mathcal{L}(G)$.
    Similarly, using (\ref{eqn:step_for_recurrence_3}) and the trivial bound $\nu_{\gamma} \leq \norm{\nu}_1$ for any $\gamma \in \Gamma$, and by absorbing one $\nu_\gamma$-term into $\mathcal{I}(W)$'s expression, we obtain
    \begin{equation}
        \itrd{C_f}{t+1} \geq \itrd{C_f}{t} - 4 \norm{\nu}_1 \alpha^2 \bigl( \max_{e \in \mathcal{E} \backslash \mathcal{L}(G)} |\itrd{W_e}{t}|^2 \bigr)^{L-1}\mathcal{I}(\itrd{W}{t})
        \label{eqn:bounds_first_integral_below}
    \end{equation}
    for the lower bound.
    Because $\itrd{W}{t} \in \mathcal{S}$ by assumption, $\max_{e \in \mathcal{E} \backslash \mathcal{L}(G)} |\itrd{W_e}{t}|^2 \leq M^2$.
    This completes the proof.
\end{proof}

\subsection{Double induction}
\label{sec:Appendix__Double_induction}

We now use Lemmas~\ref{lem:Compactness__Expanding_tree}--\ref{lem:Conserved_quantities_remain_bounded} together in a double induction to finally prove Proposition~\ref{prop:convergence_of_dropout_on_an_expanding_tree}.
Let $\kappa > 0$ and denote the statements:
\begin{align}
    A(t) & \equiv \{ \mathcal{I}( \itrd{W}{s} ) \leq \mathcal{I}( \itrd{W}{s-1} ) \e{ - 2 \nu_{\min} \kappa \alpha }, \forall
    s \in [t] \}, \label{eqn:double_induction_first_statement}
    \\
    B(t) & \equiv \{ \itrd{W}{s} \in B(\epsilon,I) \cap \mathcal{S} \, \forall
    s \in [t] \}.
    \label{eqn:double_induction_second_statement}
\end{align}
We will prove that there exists a $\kappa > 0$ such that when choosing $\alpha$ appropriately, firstly
\begin{equation}
    A(t) \cap B(t) \Rightarrow B(t+1),
    \label{eqn:Induction_proof__First_step}
\end{equation}
and secondly,
\begin{equation}
    A(t) \cap B(t+1) \Rightarrow A(t+1).
    \label{eqn:Induction_proof__Second_step}
\end{equation}

\noindent
\emph{Step 1: $A(t) \cap B(t) \Rightarrow B(t+1)$.}
We need to prove that $\itrd{W}{t+1} \in B(\epsilon, I) \cap \mathcal{S}$ assuming \eqref{eqn:double_induction_first_statement} and (\ref{eqn:double_induction_second_statement}).
Using \eqref{eqn:bounds_first_integral_above} from the proof of Lemma~\ref{lem:Conserved_quantities_remain_bounded} repeatedly with the bound $\max_{e \in \mathcal{E}} |\itrd{W_e}{t}| \leq M$, we obtain
\begin{equation}
    \itrd{C_f}{t+1} \leq \itrd{C_f}{0} + 4 \norm{\nu}_1 M^{2(L-1)}\alpha^2\sum_{s=0}^t \mathcal{I}(\itrd{W}{s}).
    \label{eqn:recurrence_repeated_upper}
\end{equation}
By \eqref{eqn:double_induction_first_statement}, we can upper bound
\begin{equation}
    \sum_{s=0}^{t} \mathcal{I}(\itrd{W}{s})
    \eqcom{\ref{eqn:double_induction_first_statement}}\leq \sum_{s=0}^{t} \mathcal{I}( \itrd{W}{0})\exp(-2\nu_{\min}\kappa \alpha s) \leq \mathcal{I}( \itrd{W}{0}) \frac{1}{ 1 - \e{ -2\nu_{\min}\kappa \alpha } }.
    \label{eqn:bound_geometric_sum_double_induction}
\end{equation}
If furthermore (C1) $0 < 2 \nu_{\min} \kappa \alpha < 1$, then (i) the inequality $ 1/(1-\exp(-2\nu_{\min} \kappa \alpha)) < 1/(\nu_{\min}\kappa \alpha)$ holds, so that
\begin{equation}
    \itrd{C_{\min}}{t+1}
    \eqcom{\ref{eqn:recurrence_repeated_upper}}\leq \itrd{C_{\min}}{0} + 4 \norm{\nu}_1 M^{2(L-1)}\alpha^2 \sum_{s=0}^{t} \mathcal{I}(\itrd{W}{s})
    \eqcom{i}\leq \itrd{C_{\min}}{0} + 4 \frac{\norm{\nu}_1}{\nu_{\min}}M^{L-1}\alpha \kappa^{-1} \mathcal{I}(\itrd{W}{0}).
    \label{eqn:proof_step}
\end{equation}
In the same manner, we can also prove \eqref{eqn:proof_step} for $\itrd{C_f}{0}$ instead of $\itrd{C_{\min}}{0}$.
This yields
\begin{equation}
    \itrd{C_f}{t+1} \leq \itrd{C_f}{0} + 4\frac{\norm{\nu}_1}{\nu_{\min} \kappa} M^{2(L-1)}\alpha \mathcal{I}(\itrd{W}{0})
    \label{eqn:set_I_recurrence_repeated_upper}
\end{equation}
for any $f \in \mathcal{E} \backslash \mathcal{L}(G)$.
Similarly, for a lower bound, we can use \eqref{eqn:bounds_first_integral_below} repeatedly together with the bound \eqref{eqn:bound_geometric_sum_double_induction} and condition (C1) yielding
\begin{equation}
    \itrd{C_f}{t+1} \geq \itrd{C_f}{0} - 4 \frac{\norm{\nu}_1}{\nu_{\min} \kappa} M^{2(L-1)}\alpha \mathcal{I}(\itrd{W}{0}).
    \label{eqn:set_I_recurrence_repeated_lower}
\end{equation}
for any $f \in \mathcal{E} \backslash \mathcal{L}(G)$.
Now, suppose (D1) $\itrd{C_{\min}}{0} - \kappa^{1/(L-1)} > 0$ and let (C2) the step size satisfy
\begin{equation}
    \alpha \leq \nu_{\min} \kappa \frac{\itrd{C_{\min}}{0} - \kappa^{1/(L-1)}}{8\norm{\nu}_1M^{2(L-1)} \mathcal{I}( \itrd{W}{0})}.
    \label{eqn:proof_stepsize_bound2}
\end{equation}
We have (i) by \eqref{eqn:set_I_recurrence_repeated_upper} and \eqref{eqn:set_I_recurrence_repeated_lower} that
\begin{align}
    \itrd{C_f}{t+1}
     &
    \eqcom{i}\in [\itrd{C_f}{0} - 4 \frac{\norm{\nu}_1}{\nu_{\min}} M^{2(L-1)}\alpha \kappa^{-1} \mathcal{I}(\itrd{W}{0}), \itrd{C_f}{0} + 4 \frac{\norm{\nu}_1}{\nu_{\min}} M^{2(L-1)}\alpha \kappa^{-1} \mathcal{I}(\itrd{W}{0})]
    \nonumber \\  &
       \eqcom{\ref{eqn:proof_stepsize_bound2}} \subseteq [\itrd{C_f}{0} - \frac{1}{2}(\itrd{C_{\min}}{0} - \kappa^{1/(L-1)}), \itrd{C_f}{0} + \frac{1}{2}(\itrd{C_{\min}}{0} - \kappa^{1/(L-1)})]
    \nonumber \\  &
       \eqcom{D1} \subseteq [\itrd{C_f}{0} - \itrd{C_f}{0}/2, \itrd{C_f}{0} + \itrd{C_f}{0}/2]
\subseteq [\itrd{C_f}{0}/2, 3\itrd{C_f}{0}/2]
       = I_f.
\end{align}
Then $\itrd{W}{t+1} \in B(\epsilon, I)$ by \eqref{eqn:definition_compact_set_wrt_I}.
Hence, $M > \itrd{W_f}{t+1} \eqcom{\ref{eqn:definition_first_integrals}} > ( \itrd{C_f}{0}/2 )^{1/2} \geq ( \itrd{C_{\min}}{0}/2 )^{1/2} > \delta$ for any $f \in \mathcal{E} \backslash \mathcal{L}(G)$.
Since moreover $\itrd{C_e}{t+1} > 0$ for all $e \in \mathcal{E} \backslash \mathcal{L}(G)$, we have that if $f \in \mathcal{L}(G)$, then $M^2 > (\itrd{W_j}{t+1})^2 > (\itrd{W_f}{t+1})^2$ for some $j \in \mathcal{E} \backslash \mathcal{L}(G)$.
Consequently $M \geq |\itrd{W_f}{t+1}|$ and $\itrd{W}{t+1} \in \mathcal{S}$.

\noindent
\emph{Step 2: $A(t) \cap B(t+1) \Rightarrow A(t+1)$.}
Suppose that $\itrd{W}{s} \in B(\epsilon, I) \cap \mathcal{S}$ for $s = 0, 1, \ldots, t+1$.
Using the bound in \eqref{eqn:set_I_recurrence_repeated_upper} which requires the induction hypothesis $A(t)$ and (C1) for $\itrd{C_{\min}}{t}$, we obtain
\begin{equation}
    \itrd{C_{\min}}{t} \geq \itrd{C_{\min}}{0} - 4 \frac{\norm{\nu}_1}{\nu_{\min} \kappa} M^{2(L-1)}\alpha \mathcal{I}(\itrd{W}{0}).
    \label{eqn:recurrence_repeated_lower}
\end{equation}
Suppose now for a moment that (C2) the right-hand side of \eqref{eqn:recurrence_repeated_lower} is positive for some sufficiently small $\alpha$.
We could then use the \gls{PL} inequality from Lemma~\ref{lem:PL_inequality__Expanding_tree} together with $\min_{e \in \mathcal{E} \backslash \mathcal{L}(G)} |\itrd{W_e}{t}|^{2(L-1)} \allowbreak \geq (\itrd{C_{\min}}{t})^{L-1}$, that is,
\begin{equation}
    \pnorm{\nabla\mathcal{I}( \itrd{W}{t})}{2}^2
    \geq 4\nu_{\min}(\itrd{C_{\min}}{t})^{L-1} \mathcal{I}( \itrd{W}{t}).
    \label{eqn:PL_inequality_appendix_induction}
\end{equation}
To see how, note that the argumentation around \eqref{eqn:Exponential_decay_due_to_PL_inequality_plus_beta_smoothness} together with \eqref{eqn:PL_inequality_appendix_induction} and (i) the induction hypothesis $B(t+1)$ we have $\itrd{W}{t}, \itrd{W}{t+1} \in B(\epsilon,I) \cap \mathcal{S}$ and (ii) the clause (L1) $\alpha \leq 1/(2\beta)$, implies
\begin{align}
     &
    \mathcal{I}( \itrd{W}{t+1})
    \eqcom{i,ii, \ref{eqn:PL_inequality_appendix_induction}}\leq \mathcal{I}( \itrd{W}{t})\exp\bigl( -2 \nu_{\min} \alpha(\itrd{C_{\min}}{t})^{L-1} \bigr) \nonumber \\  &
       \eqcom{\ref{eqn:recurrence_repeated_lower}}\leq \mathcal{I}( \itrd{W}{t}) \exp{ \Bigl( -2\nu_{\min} \alpha \bigl( \itrd{C_{\min}}{0} - 4 \frac{\norm{\nu}_1}{\nu_{\min} \kappa} M^{2(L-1)}\alpha \mathcal{I}(\itrd{W}{0}) \Bigr) }
    \nonumber                                                                                                                                                        \\  &
       \eqcom{iii}\leq \mathcal{I}( \itrd{W}{0}) \exp{ \Bigl( -2\nu_{\min} \alpha \bigl( \itrd{C_{\min}}{0} - 4 \frac{\norm{\nu}_1}{\nu_{\min} \kappa} M^{2(L-1)}\alpha \mathcal{I}(\itrd{W}{0})\bigr)^{L-1} - 2 \nu_{\min} \alpha\kappa t\Bigr) }
       \label{eqn:Intermediate_bound_on_DWtp1_deterministic}
\end{align}
where we have also used (iii) the induction hypothesis $A(t)$, i.e., that $\mathcal{I}( \itrd{W}{t}) \leq \mathcal{I}( \itrd{W}{0}) \cdot \allowbreak \e{ -2\nu_{\min}\kappa \alpha t}$.

We now investigate the exponent in \eqref{eqn:Intermediate_bound_on_DWtp1_deterministic} for a moment.
Assuming (C2) and if (C3) the right-hand side of \eqref{eqn:Intermediate_bound_on_DWtp1_deterministic} is furthermore smaller than $\mathcal{I}( \itrd{W}{0}) \exp(- 2 \nu_{\min}\kappa \alpha (t +1))$, then the induction step would be complete.
Note finally that both conditions (C2) and (C3) are satisfied when choosing
\begin{equation}
    \kappa \leq \bigl( \itrd{C_{\min}}{0} - 4 \frac{\norm{\nu}_1}{\nu_{\min}}M^{2(L-1)}\alpha \kappa^{-1} \mathcal{I}(\itrd{W}{0})\bigr)^{L-1}
    \label{eqn:proof_lower_bound_interval}
\end{equation}
or equivalently
\begin{equation}
    \alpha \leq \nu_{\min} \kappa \frac{\itrd{C_{\min}}{0} - \kappa^{1/(L-1)}}{4\norm{\nu}_1M^{2(L-1)} \mathcal{I}( \itrd{W}{0})}.
    \label{eqn:proof_stepsize_bound}
\end{equation}
To also satisfy condition (C1), we thus require that
\begin{equation}
    \alpha \leq \min \Bigl( \frac{1}{2 \nu_{\min} \kappa}, \nu_{\min} \kappa \frac{\itrd{C_{\min}}{0} - \kappa^{1/(L-1)}}{4\norm{\nu}_1M^{2(L-1)} \mathcal{I}( \itrd{W}{0})} \Bigr).
\end{equation}

\noindent
\emph{Step 3.}
Let us summarize.
Convergence occurs at rate at most $2\nu_{\min} \kappa \alpha$ if conditions (L1), (D1), (C1)--(C3) hold.
Hence we have to choose $\kappa > 0$ such that $\itrd{C_{\min}}{0} - \kappa^{L-1} > 0$ and
\begin{equation}
    \alpha \leq \min \Bigl( \nu_{\min} \kappa \frac{\itrd{C_{\min}}{0} - \kappa^{1/(L-1)}}{8\norm{\nu}_1 M^{2(L-1)} \mathcal{I}( \itrd{W}{0})}, \frac{1}{2\beta}, \frac{1}{2 \nu_{\min} \kappa} \Bigr).
    \label{eqn:stepsize_prefinal}
\end{equation}

Note that we can maximize the convergence rate $2\nu_{\min}\alpha \kappa$ by maximizing
$
    \kappa^2(\itrd{C_{\min}}{0} - \kappa^{1/(L-1)})
$,
which occurs when $\kappa = (\itrd{C_{\min}}{0})^{L-1}(1+1/(2(L-1)))^{-(L-1)} \geq \e{-1/2}(\itrd{C_{\min}}{0})^{L-1}$.
Substituting this in \eqref{eqn:stepsize_prefinal} we require a step size
\begin{equation}
    \alpha \leq \min \Bigl( \nu_{\min} \frac{ \e{1/2} (\itrd{C_{\min}}{0})^{L} }{8\norm{\nu}_1 (2L-1) M^{2(L-1)} \mathcal{I}( \itrd{W}{0})}, \frac{1}{2\beta}, \frac{1}{2 \nu_{\min} (\itrd{C_{\min}}{0})^{L-1}} \Bigr).
\end{equation}
Finally, we have the bound $\beta \leq 6 \nu_{\max} \abs{\mathcal{E}(G)} \abs{\Gamma(G)} M^{2(L-1)}$ from Lemma~\ref{lem:Compactness__Expanding_tree} in $\mathcal{S}$, so that
\begin{equation}
    \alpha \leq \min \Bigl( \nu_{\min} \frac{ \e{1/2}(\itrd{C_{\min}}{0})^{L} }{16\norm{\nu}_1 L M^{2(L-1)} \mathcal{I}( \itrd{W}{0})}, \frac{1}{12 \nu_{\max} \abs{\mathcal{E}(G)} \abs{\Gamma(G)} M^{2(L-1)}}, \frac{1}{2 \nu_{\min} (\itrd{C_{\min}}{0})^{L-1}} \Bigr).
    \label{eqn:stepsize_final}
\end{equation}
This completes our proof of Proposition~\ref{prop:convergence_of_dropout_on_an_expanding_tree}.
\BlackBox
 \section{Convergence rate in the case of \emph{Dropout} and \emph{Dropconnect} -- Proof of Proposition~\ref{prop:estimate_nu_dependent_p}}
\label{sec:Appendix__Corollary}

We consider first the case of \emph{Dropconnect}.
We have that $\{ F_e \}_{e \in \mathcal{E}}$ are independent and identically distributed $\mathrm{Bernoulli}(p)$ random variables.
Suppose that the base graph $G$ has no cycles and every path is of length $L$.
Then by definition in Lemma~\ref{lem:Path_representation_of_DW}, we have
\begin{align}
    \eta_{\gamma}
     &
    = \sum_{ \{g \in \mathcal{G}| \gamma \in \Gamma(g) \} } \probability{G_F = g} = \sum_{g \in \mathcal{G}} \indicator{\gamma \in \Gamma(g)} \probability{G_F = g}
    \nonumber \\  &
       = \sum_{g \in \mathcal{G}} \probability{ \gamma \in \Gamma(g)| G_F = g} \probability{G_F = g} = \probability{ \gamma \in \Gamma(G_F)}
       \eqcom{i}= p^L
       \label{eqn:probability_graph_dropconnect}
\end{align}
where (i) we have used \emph{Dropconnect}'s distribution on $F$.

Now suppose that additionally we make the stronger assumption that $G$ is an arborescence.
Then by definition in \refCorollary{cor:Path_representation_of_DW__Tree_case_arborescence} $\nu_{\gamma} = \expectation{X^2} \eta_{\gamma}$, and subsequently we can calculate $\norm{\nu}_1 = \expectation{X^2} \sum_{\gamma \in \Gamma(G)} \nu_{\gamma} = \expectation{X^2} \abs{\Gamma(G)} p^L = \expectation{X^2} d_L p^L$.

Now, since by assumption $\max_{\gamma} \abs{z_{\gamma}} \leq M^L$ and $\abs{W_f} \leq M$ for all $f \in \mathcal{E}$, then $\mathcal{I}(\itrd{W}{0}) \leq O(\abs{\Gamma(G)} M^{2L})$ so that substitution of in the definition of $\alpha$ in Proposition~\ref{prop:convergence_of_dropout_on_an_expanding_tree} yields
\begin{equation}
    \alpha = O \Bigl( \frac{(\itrd{C_{\min}}{0})^{L}}{L M^{4L}} \Bigr),
\end{equation}
where we have used that $C_{\min} \leq M^2$.
Finally multiplying by $\tau$ gives the rate
\begin{equation}
    \alpha \tau = O \Bigl( \frac{p^L (\itrd{C_{\min}}{0})^2L}{ L (d_L)^2 M^{4L} } \Bigr).
\end{equation}
Substituting these results in the rate $\tau \alpha$ in Proposition~\ref{prop:convergence_of_dropout_on_an_expanding_tree} yields the result for \emph{Dropconnect}.

Finally we note that for the case of \emph{Dropout}, filtering all nodes independently in an arborescence is equivalent to filtering all edges independently except the edge at the root.
In particular, in \eqref{eqn:probability_graph_dropconnect}, we have $\probability{ \gamma \in \Gamma(G_F)}
    = p^{L-1}$.
The remaining steps of the proof are then the same as for \emph{Dropconnect} and comparing $p^{L}$ with $p^{L-1}$ we can absorb the missing $p$ factor into the $O$ notation, which does not change the order in $L$.
\BlackBox

\section{Inequalities pertaining to the Frobenius norm}
\label{appendix:inequalities_norm_matrix}

\begin{lemma}
    For any matrix $A \in \realNumbers^{m \times n}$ and $1 \leq k < \infty$, it holds that $\sum_{i,j} ( 1 + A_{ij}^2 )^k \leq nm ( 1 + \pnorm{A}{\mathrm{F}} )^{2k}$.
For any two matrices $A \in \realNumbers^{m \times n}$, $B \in \realNumbers^{n \times p}$ and $0 \leq k < \infty$, it holds that $( 1 + \pnorm{AB}{\mathrm{F}} )^k \leq ( 1 + \pnorm{A}{\mathrm{F}} )^k ( 1 + \pnorm{B}{\mathrm{F}} )^k $.
For any two matrices $A, B \in \R^{n \times m}$, it holds that $\norm{A \odot B}_{\mathrm{F}} \leq \norm{A}_{\mathrm{F}}\norm{B}_{\mathrm{F}}$.
    \label{lemma:inequalities_norm_matrices}
\end{lemma}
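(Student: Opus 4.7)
The lemma contains three elementary Frobenius-norm bounds, and I would prove each separately as a short self-contained calculation. None of them requires heavy machinery; the key observations are (i) every entry is dominated by the Frobenius norm, (ii) the Frobenius norm is submultiplicative, and (iii) cross-terms in $(1+x)(1+y)$ are nonnegative.

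For the first inequality, I would start from the pointwise bound $A_{ij}^2 \leq \sum_{i',j'} A_{i'j'}^2 = \|A\|_{\mathrm{F}}^2$, which gives $1 + A_{ij}^2 \leq 1 + \|A\|_{\mathrm{F}}^2$. Since $1 + \|A\|_{\mathrm{F}}^2 \leq (1+\|A\|_{\mathrm{F}})^2$ (expand the right-hand side and note the linear term is nonnegative), raising to the $k$-th power and summing over the $nm$ entries yields $\sum_{i,j}(1+A_{ij}^2)^k \leq nm(1+\|A\|_{\mathrm{F}})^{2k}$.

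For the second inequality, the plan is to invoke submultiplicativity of the Frobenius norm, $\|AB\|_{\mathrm{F}} \leq \|A\|_{\mathrm{F}}\|B\|_{\mathrm{F}}$ (itself a one-line Cauchy--Schwarz argument over the rows of $A$ and columns of $B$). Then $1+\|AB\|_{\mathrm{F}} \leq 1 + \|A\|_{\mathrm{F}}\|B\|_{\mathrm{F}} \leq (1+\|A\|_{\mathrm{F}})(1+\|B\|_{\mathrm{F}})$, the last step because expanding the right-hand side adds the nonnegative terms $\|A\|_{\mathrm{F}}+\|B\|_{\mathrm{F}}$. Raising to the $k$-th power preserves the inequality since both sides are nonnegative.

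For the third inequality, I would work directly from the definition of the Hadamard product and the Frobenius norm:
\[
\|A \odot B\|_{\mathrm{F}}^2 = \sum_{i,j} A_{ij}^2 B_{ij}^2 \leq \Bigl(\max_{i,j} B_{ij}^2\Bigr) \sum_{i,j} A_{ij}^2 \leq \|B\|_{\mathrm{F}}^2 \|A\|_{\mathrm{F}}^2,
\]
using $\max_{i,j} B_{ij}^2 \leq \sum_{i,j} B_{ij}^2 = \|B\|_{\mathrm{F}}^2$ at the last step. Taking square roots yields the claim. There is no substantial obstacle here; the only minor subtlety is making sure the chain of elementary inequalities for (2) is organised so that raising to the $k$-th power is valid with $k \geq 0$, which follows because every quantity involved is nonnegative.
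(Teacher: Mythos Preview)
Your proof is correct. The arguments for the second and third inequalities are essentially the same as the paper's (the paper expands $(1+\|A\|_{\mathrm{F}})(1+\|B\|_{\mathrm{F}})$ to absorb the cross terms exactly as you do, and for the Hadamard bound it writes $\sum_{i,j}A_{ij}^2 B_{ij}^2 \leq \bigl(\sum_{i,j}A_{ij}^2\bigr)\bigl(\sum_{i,j}B_{ij}^2\bigr)$ directly rather than going through the max, but the content is identical).

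Your treatment of the first inequality, however, is genuinely different and in fact simpler. The paper applies Minkowski's inequality to the sequence $\{1+A_{ij}^2\}$, then uses monotonicity of $z\mapsto z^k$, then the $\ell_p$-norm comparison $\|x\|_{2k}\leq \|x\|_2$ for $k\geq 1$, and only at the end invokes $(1+z^2)^k\leq(1+z)^{2k}$. Your route bypasses all of this: the trivial pointwise bound $A_{ij}^2\leq \|A\|_{\mathrm{F}}^2$ already gives $1+A_{ij}^2\leq (1+\|A\|_{\mathrm{F}})^2$ uniformly in $(i,j)$, so summing the $k$-th powers over the $nm$ entries is immediate. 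This avoids the detour through Minkowski and the norm-comparison step entirely; the only thing the paper's argument potentially buys is a slightly sharper intermediate bound $nm(1+\|A\|_{\mathrm{F}}^2)^k$ before the final relaxation, but since both proofs end at the same stated inequality this gain is not used.
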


\begin{proof}
    Recall Minkowski's inequality for sequences; that is
    $
        \bigl( \sum_i | x_i + y_i |^k \bigr)^{1/k}
        \leq \bigl( \sum_i | x_i |^k \bigr)^{1/k} \allowbreak + \bigl( \sum_i | y_i |^k \bigr)^{1/k}
    $, which holds for $1 \leq k < \infty$.
    It (i) implies that for any matrix $A \in \realNumbers^{n \times m}$ and $1 \leq k < \infty$, that
    \begin{equation}
        \sum_{i,j} ( 1 + A_{ij}^2 )^k
        \eqcom{i}\leq \Bigl( ( n m )^{1/k} + \bigl( \sum_{i,j} | A_{i,j}^2 |^{k} \bigr)^{1/k} \Bigr)^k
        \eqcom{ii}\leq nm \Bigl( 1 + \bigl( \sum_{i,j} | A_{i,j}^2 |^{k} \bigr)^{1/k} \Bigr)^k
    \end{equation}
    where (ii) we have used that the function $z^k$ is nondecreasing in $z \geq 0$ whenever $k \geq 0$.
    Because (iii) for the $\ell_k$-norm for sequences it holds that $\pnorm{x}{2k}^2 \leq \pnorm{x}{2}^2$ whenever $1 \leq k < \infty$, we obtain
    \begin{equation}
        \sum_{i,j} ( 1 + A_{ij}^2 )^k
        \eqcom{iii}\leq nm ( 1 + \pnorm{A}{\mathrm{F}}^2 )^{k}
        \eqcom{iv}\leq nm ( 1 + \pnorm{A}{\mathrm{F}} )^{2k}
        \label{eqn:Bound_on_the_quadratic_sum_of_matrix_elements_plus_one_by_Frobenius_norm}
    \end{equation}
    where (iv) we have used that the function $(1+z^2)^k \leq (1+z)^{2k}$ for all $z \geq 0$ whenever $k \geq 0$.
    This proves the first inequality.

    The second inequality is an immediate consequence of the submultiplicativity property of the Frobenius norm and its positivity, i.e.,
    \begin{equation}
        1 + \pnorm{AB}{\mathrm{F}}
        \leq 1 + \pnorm{A}{\mathrm{F}} \pnorm{B}{\mathrm{F}}
        \leq 1 + \pnorm{A}{\mathrm{F}} + \pnorm{B}{\mathrm{F}} + \pnorm{A}{\mathrm{F}} \pnorm{B}{\mathrm{F}}.
        \label{eqn:inequality_submulti}
    \end{equation}
    Raising to the $k$-th power left and right finishes its proof.

    The third inequality follows from strict positivity of the summands:
    \begin{equation}
        \pnorm{ A \odot B }{\mathrm{F}}^2
        = \sum_{i,j} A_{ij}^2 B_{ij}^2
        \leq \Bigl( \sum_{i,j} A_{ij}^2 \Bigr) \Bigl( \sum_{i,j} B_{ij}^2 \Bigr)
        = \pnorm{ A }{\mathrm{F}}^2 \pnorm{ B }{\mathrm{F}}^2.
        \label{eqn:Bound_on_Frobenius_norm_of_A_odot_B}
    \end{equation}
    Each of the inequalities has now been shown.
\end{proof}
 \end{document}